\setlist{noitemsep, topsep=0cm}
\tikzset{cross/.style={cross out, draw=black, minimum size=2*(#1-\pgflinewidth), inner sep=0pt, outer sep=0pt},cross/.default={1pt}}
\newtheorem{theorem}{Theorem}[section]
\newtheorem{proposition}[theorem]{Proposition}
\newtheorem{lemma}[theorem]{Lemma}
\newtheorem{corollary}[theorem]{Corollary}
\newtheorem{definition}[theorem]{Definition}
\theoremstyle{remark}
\newtheorem{remark}[theorem]{Remark}
\theoremstyle{claim}
\newtheorem{claim}[theorem]{Claim}
\theoremstyle{definition}
\newcommand{\norm}[1]{\left\lVert#1\right\rVert}
\newcommand{\pmat}[3]{\begin{pmatrix} #1 & #2 &\cdots & #3 \end{pmatrix}}
\newcommand{\pmatr}[1]{\begin{pmatrix}#1\end{pmatrix}}
\newcommand{\dict}[1]{d_{#1}}
\newcommand{\dictx}[1]{\delta_{#1}}
\newcommand{\inprod}[2]{\left\langle#1, #2\right\rangle}
\newcommand{\symmetric}[1]{\mathbb{S}^{#1 \times #1}}
\newcommand{\posdef}[1]{\mathbb{S}^{#1 \times #1}_{++}}
\newcommand{\possemdef}[1]{\mathbb{S}^{#1 \times #1}_{+}}
\newcommand{\ortho}[1]{\mathbb{O}^{#1 \times #1}}
\newcommand{\linearmap}{A}
\newcommand{\orthobasis}[1]{x_{#1}}
\newcommand{\rodvec}[1]{y_{#1}}
\newcommand{\length}[1]{\alpha_{#1}}
\newcommand{\majorize}{\prec}
\newcommand{\dimension}{n}
\newcommand{\permpolytope}[1]{\mathcal{P}(#1)}
\newcommand{\indexes}[1]{1, 2, \ldots, #1}
\newcommand{\eigval}[1]{\lambda_{#1}}
\newcommand{\eigvec}[1]{u_{#1}}
\newcommand{\primalvar}[1]{\lambda_{#1}}
\newcommand{\dualvar}[1]{x_{#1}}
\newcommand{\lagrangian}{L}
\newcommand{\poseig}{m}
\newcommand{\sigmat}[1]{\Sigma_{#1}}
\newcommand{\eigvalsig}[1]{\sigma_{#1}}
\newcommand{\eigvecsig}[1]{u_{#1}}
\newcommand{\xz}{\bar{x}}
\newcommand{\xf}{\hat x}
\newcommand{\reachindex}{K}
\newcommand{\reachmat}{\mathcal{R}}
\newcommand{\uvec}[1]{u_{#1}}
\newcommand{\bval}[1]{b_{#1}}
\newcommand{\aval}[1]{a_{#1}}
\newcommand{\gmap}[1]{g(#1)}
\newcommand{\optval}[3]{J \left( #1, #2, #3 \right)}
\newcommand{\eigvalmap}[3]{\lambda \left( #1, #2, #3 \right)}
\newcommand{\eqmul}{\eta}
\newcommand{\ineqmul}[1]{\gamma_{#1}}
\newcommand{\partition}[1]{n_{#1}}
\newcommand{\randomvec}{V}
\newcommand{\const}{c}
\newcommand{\projectedrv}{\randomvec (\const)}
\newcommand{\coef}[1]{r_{#1}}
\newcommand{\coefvec}{r}
\newcommand{\scheme}{f}
\newcommand{\optscheme}{f \opt}
\DeclareSymbolFont{symbolsC}{U}{pxsyc}{m}{n}
\DeclareMathOperator{\trace}{tr}
\DeclareMathOperator{\image}{image}
\DeclareMathOperator{\Span}{span}
\DeclareMathOperator{\rank}{rank}
\DeclareMathOperator{\diag}{diag}
\DeclareMathOperator{\sort}{sort}
\DeclareMathOperator{\EE}{\mathsf{E}}
\DeclareMathOperator{\PP}{\mathsf{P}}
\DeclareMathOperator{\var}{Var}
\DeclareMathOperator*{\minimize}{minimize}
\DeclareMathOperator*{\sbjto}{subject\,to}
\DeclareMathOperator*{\maximize}{maximize}
\DeclarePairedDelimiterX\set[1]\lbrace\rbrace{\def\suchthat{\;\delimsize\vert\;}#1}
\newcommand{\Let}{\coloneqq}
\newcommand{\R}{\mathbb{R}}
\newcommand{\opt}{^\ast}
\newcommand{\transp}{^\top}
\newcommand{\lra}{\longrightarrow}
\newcommand{\lmt}{\longmapsto}
\newcommand{\dsize}{K}
\newcommand{\dictionary}{D_{\length{}}}
\renewcommand{\geq}{\geqslant}
\renewcommand{\ge}{\geqslant}
\renewcommand{\leq}{\leqslant}
\renewcommand{\mapsto}{\longmapsto}
\title[A complete characterization of \(\ell_2\)-optimal dictionaries]{A Complete Characterization of Optimal Dictionaries for Least Squares Representation}
\author{Mohammed Rayyan Sheriff}
\author{Debasish Chatterjee}
\address{Systems and Control Engineering\\ IIT Bombay, Powai\\ Mumbai 400076\\ India.}
\email{(MRS) \texttt{rayyan@iitb.ac.in}, (DC) \texttt{dchatter@iitb.ac.in}.}
\begin{document}

\begin{abstract}
	Dictionaries are collections of vectors used for representations of elements in Euclidean spaces. While recent research on optimal dictionaries is focussed on providing sparse (i.e., \(\ell_0\)-optimal,) representations, here we consider the problem of finding optimal dictionaries such that representations of samples of a random vector are optimal in an \(\ell_2\)-sense. For us, optimality of representation is equivalent to minimization of the average $\ell_2$-norm of the coefficients used to represent the random vector, with the lengths of the dictionary vectors being specified a priori. With the help of recent results on rank-\(1\) decompositions of symmetric positive semidefinite matrices and the theory of majorization, we provide a complete characterization of \(\ell_2\)-optimal dictionaries. Our results are accompanied by polynomial time algorithms that construct \(\ell_2\)-optimal dictionaries from given data.
\end{abstract}

\keywords{%
	\(\ell_2\)-optimal dictionary, frame theory, majorization%
}

\maketitle

\section{Introduction}
	
We begin with a toy example to motivate the problems treated in this article. Let \( V \) be a random vector that attains values `close' to \(\pmatr{0 & 2}^\top\) with high probability; Suppose that our dictionary consists of the vectors $ d_1 = \pmatr{1 & -\epsilon}^\top $ and $ d_2 = \pmatr{1  &\epsilon}^\top $ in $\mathbb{R}^2$, with a small positive value of $\epsilon$. Since we must represent \( V \) using $d_1$ and $d_2$, the corresponding coefficients \( \alpha_1 \) and \( \alpha_2 \) must be such that $\alpha_1 \pmatr{1 & \epsilon}^\top + \alpha_2 \pmatr{1 & -\epsilon}^\top = V \approx \pmatr{0 & 2}^\top $. A quick calculation shows that the magnitudes of the coefficients \( \alpha_1 \) and \( \alpha_2 \) should then be approximately equal to \( 1/(\epsilon) \) with high probability. To wit, the magnitudes of these coefficients are large for small values of $\epsilon$. It is therefore more appropriate in this situation to consider a dictionary consisting of vectors $ d_1\opt = \pmatr{\epsilon & 1}^\top $ and $ d_2\opt = \pmatr{-\epsilon & 1}^\top $ to represent the samples of \( V \), in which case, the magnitudes of the coefficients of the representations are closer to $1$ with high probability. The latter values are far smaller compared to the values close to $1/(\epsilon)$ obtained with the preceding dictionary. This simple example shows that given some statistical information about the random vectors to be represented, the question of designing a dictionary that minimizes the average cost of representation can be better addressed. 

Our motivation for the investigation carried out in this article and \cite{sheriff2016optimal} comes from a control theoretic perspective. Consider a linear time-invariant control system modeled by the recursion
\begin{equation}
\label{linear system}
x(t+1) = A x(t) + B u(t), \quad t = 0, 1, \ldots,
\end{equation}
where the `system matrix' \(A\in\R^{n\times n}\) and the `control matrix' \(B\in\R^{n\times m}\) are given, with the initial boundary condition \(x(0) = \xz\in\R^n\) fixed. For an arbitrarily selected \(\xf\in\R^n\), the standard / classical \emph{reachability problem} for \eqref{linear system}, consists of finding a sequence \( (u(t))_{t}\subset\R^m \) of control vectors that steer the system states to \( \xf \). A necessary and sufficient condition for such a sequence to exist for every pair \((\xz, \xf)\) is that there exists a positive integer \(K\) such that the rank of the matrix \( \reachmat_{\reachindex} (A, B) \Let \pmatr{B & AB & \cdots & A^{K-1}B} \) is equal to \( n \). We impose this rank condition for the moment, and pick an integer \(K\ge n\). It is observed at once that the control vectors \( (u(t))_{t=0}^{\reachindex-1} \) needed to execute the transfer of the states of \eqref{linear system} from $\xz$ to $\xf$ must solve the linear equation
\[
	\xf - A^\reachindex \xz = \sum_{t=0}^{\reachindex-1} A^t B u(t) = \reachmat_{\reachindex} (A, B) \pmatr{u(\reachindex-1)\\\vdots \\ u(1)\\ u(0)}.
\]
Out of all the feasible control sequences \( (u(t))_{t} \) that execute the system state transfer, it is now natural to consider those particular control sequences that minimizes the `control cost' $ \sum_{i = 0}^{\reachindex - 1} \norm{u(t)}^2 $ of transferring \(\xz\) to \(\xf\). In practice,  the afore mentioned \( \ell_2 \) performance index provides a natural measure of the energy spent to 
steer the system from \(\bar x\) to \(\hat x\). 

Minimization of control effort has been an integral part of control theory, and its practical importance can hardly be overstated in this context. This topic is generally studied under the class of Linear Quadratic optimal control problems; see, e.g., \cite{bertsekas1995dynamic}, \cite{anderson2007optimal}, \cite{clarke2013functional}, \cite{liberzon2012calculus}, or any standard book on optimal control. Our initial interest in this setting was to find optimal orientations of the columns of \( \reachmat_{\reachindex} (A, B) \), thus as a means of characterizing systems that are `better' in a structural sense than others, where the criterion for optimality is to optimize a certain measure of quality / figure of merit of a linear system. For instance, if we define $W_{A, B} \coloneqq \reachmat_{\reachindex}(A, B)\reachmat_{\reachindex}(A, B)\transp$, the early work \cite{ref:MulWeb-72} proposed the quantities
$\trace\bigl(W^{-1}_{A, B}\bigr)$, \( \lambda_{\min}^{-1} \big( W_{A, B} \big) \) and \( \det \big( W_{A, B} \big) \) as three measures of quality, and one would like to achieve a minimal value of $\trace\bigl(W^{-1}_{A, B}\bigr)$, \( \lambda_{\min}^{-1} \big( W_{A, B} \big) \) and a maximal value of \( \det \big( W_{A, B} \big) \) for ``good'' control systems based on energy considerations. Recently, we demonstrated in \cite{sheriff2017frame} that all the aforementioned three measures of quality get optimized simultaneously when the orientation of columns of \( \reachmat_{\reachindex} (A, B) \) is tight, i.e., when the columns of \( \reachmat_{\reachindex} (A, B) \) form a \emph{tight frame}. A succinct connection between good structural properties of a linear system and frame theory was, therefore, established.

It is of independent interest to address the mathematical problem that lies at the heart of the above discussion. To this end, we define a \emph{dictionary} to be a collection of vectors in a finite-dimensional vector space over $\mathbb{R}$, with which every element of the vector space can be represented. A dictionary is a generalization of a basis: While the number of vectors in a basis is exactly equal to the dimension of the vector space, a dictionary may contain more elements. Analogous to the discussion on linear systems, our objective is to find a dictionary that offers optimal least squares representation, and we shall refer such a dictionary as an \( \ell_2 \)-optimal dictionary. Characterization and algorithms to compute \( \ell_2 \)-optimal dictionaries of unit length vectors, optimal in representation of a class of vectors / random vector distributed according a generic distribution \( \mathsf{P} \) were provided in \cite{sheriff2016optimal}. In particular, it was found that the unit norm tight frames are \( \ell_2 \)-optimal for the representation of samples distributed uniformly over the unit sphere.

In the relatively recent article \cite{casazza2006physical} we encountered the problem of finding conditions for the existence of tight frames with arbitrary length vectors  and their characterization. In this article our objective is similar to that of \cite{casazza2006physical}, we consider the \( \ell_2 \)-optimal dictionary problem in a general setting where the dictionary vectors are constrained to be of fixed lengths that can be any arbitrary positive numbers instead of unity. It turns out that the results of \cite{sheriff2016optimal} can neither be directly used nor tweaked to find an \( \ell_2 \)-optimal dictionary of variable length vectors, and a fresh investigation is needed. Our approach in the current article centers around the theory of \emph{majorization}, which was fruitfully employed earlier in \cite{casazza2006physical} and \cite{casazza2002frames} in the context of frames.

In this article we start in a general setting of solving the problem of \(\ell_2\)-optimal representations of random vectors in \(\R^d\) with distribution \(\mathsf P\). For the problem to be well-defined, we need the distribution \(\PP\) to have finite variance, which we assume. In this setting:
\begin{itemize}[leftmargin = *]
\item We provide an almost explicit solution to the \( \ell_2 \)-optimal dictionaries in terms of a rank-\( 1 \) decomposition of a certain positive matrix.

\item An algorithm to compute the \( \ell_2 \)-optimal dictionaries in polynomial time is also provided.

\item To compute \(\ell_2\)-optimal dictionaries, it turns out that only the mean and the variance of the distribution \( \PP \) have to be learned / known to arrive at a complete solution. This is an advantage in situations where complete precise information about the underlying distribution may not be available.

\item Finally, we demonstrate that the \(\ell_2\)-optimal dictionaries are robust with respect to the errors in the estimation / learning of the values of the mean and the variance, which is a desirable property.
\end{itemize}

This article unveils as follows: In Section \ref{The Dictionary Learning problem} we formally introduce our problem of finding an optimal dictionary which offers least squares representation. Section \ref{The Dictionary Learning problem} is the heart of this article, where we solve the problem of finding an \(\ell_2\)-optimal dictionary, and arrive at an almost explicit solution. Algorithms to construct \(\ell_2\)-optimal dictionaries are given in Section \ref{The Dictionary Learning problem} after Theorem \ref{theorem:DL-theorem-2}. The case of representing random vectors distributed uniformly on the unit sphere is treated in Subsection \ref{uniform section}; we demonstrate that the \(\ell_2\)-optimal dictionaries in this case are \emph{finite tight frames}. In the intermediate Section \ref{section:Mathematical tools and other auxiliary results}, we recollect some of the standard results in the theory of majorization and also provide some auxiliary results essential for the solutions of our main results. In the later sections \ref{section:proofs-of-main-results} and \ref{section:proof-of-auxiliary-results}, we provide proofs of the main and auxiliary results, respectively.

\subsection*{Notations}
We employ standard notations in this article. The Euclidean norm is denoted by \(\norm{\cdot}\). the \(n\times n\) identity and \(m\times n\) zero matrices are denoted by \(I_n\) and \(O_{m\times n}\), respectively. For a matrix \(M\) we let \(\trace(M)\) and \(M^+\) denote its trace and Moore-Penrose pseudo-inverse, respectively. The set of \(\dimension \times \dimension \) symmetric matrices with real entries is denoted by \( \symmetric{\dimension} \), and the set of \(\dimension \times \dimension \) symmetric and positive (semi-)definite matrices with real entries is denoted by \( \posdef{\dimension} \) (\( \possemdef{\dimension} \)). For a Borel probability measure \({\mathsf{P}}\) defined on \(\R^n\), we let \(\EE_{\mathsf{P}}[\cdot]\) denote the corresponding mathematical expectation. The image of a map \(f\) is written as \(\image(f)\). The gradient of a continuously differentiable function \(f\) is denoted by \(\nabla f\). The sequence \( (\delta_i)_{i = 1}^n \) denotes the standard Euclidean basis of \( \R^{\dimension} \).

Let \( \poseig \), \( T \) be positive integers such that \( T \leq \poseig \), and let \( (c_i)_{i = 1}^{\poseig} \) and \( (a_i)_{i = 1}^{\poseig} \) be two sequences of positive real numbers. Let \( (n_l)_{l= 1}^T \subset \{ \indexes{\poseig} \} \) be such that \( 1 \eqqcolon n_1 \leq n_2 \leq \cdots \leq n_T \leq \poseig \). Let us define the following two maps, the first being
\begin{subequations}
\begin{align}
\label{eq:definition-of-lambda-a}
(\eigval{i})_{i = 1}^{\poseig} = \eigvalmap{(c_i)_{i = 1}^{\poseig}}{(a_i)_{i = 1}^{\poseig}}{(n_l)_{l= 1}^T}, \quad \text{where}
\end{align}
\begin{align}
\label{eq:definition-of-lambda}
\eigval{i} \Let c_i \left( \frac{\sum\limits_{j = n_l}^{n_{l + 1} - 1} a_j}{\sum\limits_{j = n_l}^{n_{l + 1} - 1} c_j} \right);
\end{align}
\end{subequations}
for \( n_l \leq i < n_{l + 1} \), \( l = \indexes{T} \) with \( n_{T + 1} = \poseig + 1 \), and the second being
\begin{equation}
\label{eq:J-map-definition}
\optval{(c_i)_{i = 1}^{\poseig}}{(a_i)_{i = 1}^{\poseig}}{(\partition{l})_{l = 1}^{T}} \Let \sum\limits_{l = 1}^T \left( \frac{ \left( \sum\limits_{j = n_l}^{n_{l + 1} - 1} c_j \right)^2 }{\sum\limits_{j = n_l}^{n_{l + 1} - 1} a_j} \right).
\end{equation}
These two maps will be employed many times in the sequel, in particular in Theorems \ref{theorem:DL-theorem-1} and \ref{theorem:DL-theorem-2}.

Let \( k,\dsize \) be any two positive integers, and \( A : \R^{\dsize} \longrightarrow \R^{\dsize} \) be any linear map. Let \( (\uvec{i})_{i = 1}^{k} \) be any arbitrary sequence of vectors in \( \R^{\dsize} \), then the sequence \( (v_i)_{i = 1}^{k} \Let A \sort \{ (\uvec{i})_{i = 1}^{k} \} \) is defined to be the permutation of \( (\uvec{i})_{i = 1}^{k} \) such that 
\begin{equation}
\label{eq:definition-of-Asort}
\inprod{v_1}{A v_1} \geq \inprod{v_2}{A v_2} \geq \cdots \geq \inprod{v_k}{A v_k}. 
\end{equation}


\section{The $\ell_2$-optimal dictionary problem and its solution}
\label{The Dictionary Learning problem}
Let $V$ denote an \( \R^{\dimension} \)-valued random vector defined on some probability space, and having distribution (i.e., Borel probability measure,) $\PP$. We assume that the variance of \(V\) is well defined. Let $R_{\randomvec}$ denote the support of $\PP$,\footnote{Recall \cite[Theorem 2.1, Definition 2.1, pp.\ 27-28]{ref:Par-05} that the support of \( \PP \) is the set of points \(z\in\R^n\) such that the \( \PP \)-measure of every open neighbourhood of \(z\) is positive.}  and let \( \const \in \R^{\dimension} \) be a \emph{constant} vector. Let us define the following quantities
\begin{equation}
\label{eq: rv xv definition}
\begin{aligned}
R_{\randomvec}(\const) & \Let \left\{ v \in \R^{\dimension} \vert (v + \const) \in R_{\randomvec}  \right\} \\ 
X_{\randomvec}(\const) & \Let \text{the smallest subspace of \( \R^{\dimension} \)  containing  \( R_{\randomvec}(\const) \).}
\end{aligned}
\end{equation}

Let \( \dsize \geq \dimension \)  be a positive integer and \( \length{} \Let (\length{i})_{i = 1}^{\dsize} \) be a non-increasing sequence  of positive real numbers. Our goal is to represent the instances/samples of $V$ with the help of a \emph{dictionary} of vectors:
\[
	\dictionary \Let \set[\big]{\dict{i} \in \R^n \suchthat \inprod{\dict{i}}{\dict{i}} = \length{i} \text{ for} \; i = 1,\ldots, \dsize},
\]
in an optimal fashion, the criteria for optimality will be defined momentarily. Every instance \( v \) of random vector \( \randomvec \) is represented by the variation \( (v - \const) \) of \( \randomvec \) from the constant \( \const \) for obvious advantages.  A \emph{representation} of an instance \( v \) of the random vector \( V \) is given by the coefficient vector  \( \coefvec \Let ( \coef{1} \; \cdots \; \coef{\dsize} )^{\transp}  \), such that
\begin{equation}
\label{21}
(v - \const) = \sum_{i = 1}^\dsize \coef{i} \dict{i}.
\end{equation}
A \emph{reconstruction} of the sample \( v \) from the representation \( \coefvec \) is carried out by taking the linear combination \( \const \; + \; \sum_{i = 1}^{\dsize} \coef{i} \dict{i} \). We define the \emph{cost} associated with representing \( v \) in terms of the coefficient vector \( \coefvec \) as \( \sum_{i = 1}^{\dsize} \coef{i}^2 \). Since the dictionary vectors \( (\dict{i} )_{i = 1}^{\dsize} \) must be able to represent any sample of \( V \), the property that \( \Span ( \dict{i} )_{i = 1}^{\dsize} \supset R_{\randomvec}(\const) \) is essential. A dictionary \( \dictionary = ( \dict{i} )_{i = 1}^{\dsize} \subset\R^{\dimension} \) is said to be \emph{feasible} if \( \Span ( \dict{i} )_{i = 1}^{\dsize} \supset R_{\randomvec} (\const)$. We denote by \( \mathcal{\dictionary}(\const) \) the set of all feasible dictionaries to represent \( \randomvec \) with a constant of representation \( \const \).

For any vector \( v \in R_{\randomvec} (\const) \) and a feasible dictionary \( \dictionary \) of vectors \( ( \dict{i} )_{i=1}^{\dsize} \) such that if \( \poseig \Let \dim \left( \Span(\dict{i})_{i = 1}^{\dsize} \right) \), then the linear equation \eqref{21} is satisfied by infinitely many values of \( \coefvec \) whenever \( \dsize  > \poseig \). In fact, the solution set of \eqref{21} constitutes a \( (\dsize - \poseig ) \)-dimensional affine subspace of \( \R ^{\dsize} \). Therefore, in order to represent a given $v$ uniquely, one must define a mechanism of selecting a particular point from this affine subspace, thus making the coefficient vector \( \coefvec = (\coef{1} \; \cdots \; \coef{\dsize})^{\transp} \) a function of $v$. Let \( \scheme \) denote such a function; to wit, \( \scheme(v) \Let \coefvec \) is the coefficient vector used to represent the sample $v$. We call such a map \( R_{\randomvec} \ni v  \longmapsto \scheme(v) \in \R^{\dsize} \) a \emph{scheme of representation}. For a constant \( \const \in \R^{\dimension} \), representation of samples of the random vector \( \randomvec \) using a dictionary \( \dictionary \in \mathcal{\dictionary} (\const) \) and a scheme \( \scheme \) is said to be \emph{proper} if every vector \( v \in R_{\randomvec} (\const) \) can be uniquely represented and then exactly reconstructed back. It is clear that for proper representation of $\randomvec$ with a dictionary \( \dictionary \) consisting of vectors \( ( \dict{i} )_{i = 1}^{\dsize} \) and the constant \( \const \), the mapping  \( R_{\randomvec} \ni v  \longmapsto \scheme(v) \in \R^{\dsize} \) should be an injection that satisfies
\begin{equation}
\label{injectivity}
(\randomvec - \const) = \pmat{ \dict{1} }{ \dict{2} }{ \dict{\dsize} } \scheme (\randomvec) \quad \text{$\PP$-almost surely.}
\end{equation}
A scheme of representation \( \scheme \) is said to be \emph{feasible} if \eqref{injectivity} holds. We denote by \( \mathcal{F} (\const, \dictionary) \) the set of all feasible schemes for representing \( \randomvec \) using a constant \( \const \) and a dictionary \( \dictionary \). 

Given a scheme \( \scheme \) of representation, the (random) cost associated with representing \( \randomvec \) is given by \( \inprod{\scheme (\randomvec)}{\scheme (\randomvec)}  \). The problem of finding an \(\ell_2\)-optimal dictionary can now be posed as:
\begin{quote}
	Find a triplet consisting of a constant vector \( \const \opt \in \R^{\dimension} \), a dictionary \( \dictionary \opt \in \mathcal{\dictionary} (\const \opt) \) and a scheme \( \optscheme \in \mathcal{F} (\const \opt, \dictionary \opt) \) of representation such that the average cost \( \EE_{\PP} \bigl[ \inprod{\optscheme (\randomvec)}{\optscheme (\randomvec)} \bigr] \) of representation is minimal.
\end{quote}
Here the subscript $\PP$ indicates the distribution of the random vector \( \randomvec \) with respect to which the expectation is evaluated. In other words, we have the following optimization problem:
\begin{equation}
	\label{eq:DL-problem-general}
	\begin{aligned}
		& \minimize_{\const, \dictionary, \scheme}	&& \EE_{\PP} \bigl[ \inprod{\scheme (\randomvec)}{\scheme (\randomvec)} \bigr]\\
		& \sbjto				&& 
		\begin{cases}
		    \const \in \R^{\dimension},  \\
			\dictionary \in \mathcal{\dictionary} (\const), \\
			\scheme \in \mathcal{F} (\const, \dictionary).			
		\end{cases}
	\end{aligned}
\end{equation}
The problem \eqref{eq:DL-problem-general} is the \emph{\( \ell_2 \)-optimal dictionary} problem. Due to the constraints on the dictionary vectors and the restriction on the feasible schemes, it is obvious that the \( \ell_2 \)-optimal dictionary problem \eqref{eq:DL-problem-general} is non-convex.


In this article we solve the $\ell_2$-optimal dictionary problem given \eqref{eq:DL-problem-general} in two steps:
\begin{enumerate}[label=(Step \Roman*), leftmargin=*, align=left, widest=II]
	\item We first assume that \( \const = 0 \) and \( X_{\randomvec}(\const) = X_{\randomvec}(0) = \R^{\dimension} \).
	\item We let \( \const \) be any vector in \( \R^{\dimension}\) and \( X_{\randomvec}(\const) \) be any proper nontrivial subspace of \(\R^{\dimension}\).\footnote{The trivial case of \(X_{\randomvec}(\const) = \{0\}\) is discarded because then there is nothing to prove; and therefore we limit ourselves to `nontrivial' subspaces of \(\R^{\dimension}\).} 
\end{enumerate}
The remainder of this section is devoted to describing Steps I and II by exposing our main results, followed by discussions, a numerical example, and a treatment of the important case of the uniform distribution on the unit sphere of \( \R^{\dimension} \).

\subsection{Step I: \( \const = 0 \) and \( X_{\randomvec} (\const) = \R^{\dimension}\)}

If \( X_{\randomvec} (0) = \R^{\dimension} \), a dictionary of vectors \( \dictionary = ( \dict{i} )_{i = 1}^{\dsize} \subset \R^{\dimension} \) is feasible if and only if \( \inprod{\dict{i}}{\dict{i}}  = \length{i} \) for all \( i = 1,\ldots, \dsize \), and \( \Span ( \dict{i} )_{i = 1}^{\dsize} = \R^{\dimension} \). Thus, \eqref{eq:DL-problem-general} with \( \const = 0 \) reduces to:
\begin{equation} 
	\label{eq:DL-problem}
	\begin{aligned}
		& \minimize_{\{d_i\}_{i=1}^{\dsize}, \scheme}	&& \EE_{\PP} \bigl[ \inprod{\scheme (\randomvec)}{\scheme (\randomvec)} \bigr]\\
		& \sbjto							&&
		\begin{cases}
			\inprod{\dict{i}}{\dict{i}} = \length{i} \text{ for all \( i = 1, \ldots, \dsize, \)}\\
			\Span ( \dict{i} )_{i = 1}^{\dsize} = \R^{\dimension}, \\
			\pmatr{\dict{1} & \dict{2} & \cdots & \dict{\dsize}} \scheme (\randomvec) = \randomvec \text{\ \ \(\mu\)-almost surely}.
		\end{cases}
	\end{aligned}
\end{equation}

Let \( \sigmat{\randomvec} \Let \EE_{\PP} [\randomvec \randomvec\transp] \). We observe that \( \sigmat{\randomvec} \) is positive definite: Indeed, if not, then there exists a nonzero vector \( x \in \R^{\dimension} \) such that \( x\transp \randomvec = 0 \) almost surely, which contradicts the assumption that \( X_{\randomvec}(0) = \R^{\dimension} \). 

Existence and characterization of the optimal solutions to \eqref{eq:DL-problem} is asserted by the following:

\begin{theorem}
\label{theorem:DL-theorem-1}
Let \( \sigmat{\randomvec} \Let \EE_{\mathsf{P}}[\randomvec \randomvec\transp] \), \( (\eigvalsig{i})_{i = 1}^{\dimension} \) be the eigenvalues of \( \sigmat{\randomvec} \) arranged in non-increasing order. Let the sequence of positive real numbers \( (\length{i}')_{i = 1}^{\dimension} \) be defined by \( ( \length{i}' )_{i = 1}^{\dimension - 1} \Let ( \length{i} )_{i = 1}^{\dimension - 1} \) and \( \length{\dimension}' \Let \sum_{j = \dimension}^{\dsize} \length{j} \). 
The optimization problem:
\begin{equation}
\label{eq:dual-problem-in-DL-theorem-1}
\begin{aligned}
		& \minimize_{( \dualvar{t} )_t \subset \R}	&&  \sum_{t = 1}^{\dimension}  \length{t}' \dualvar{t}^2 - 2  \sqrt{\eigvalsig{t}} \dualvar{t}  \\
		& \sbjto						&&
		0 \leq \dualvar{1} \leq \cdots \leq \dualvar{\dimension},
\end{aligned}
\end{equation}
admits a unique optimal solution \( (\dualvar{t} \opt)_{t = 1}^{\dimension} \). Let the optimal value of \eqref{eq:dual-problem-in-DL-theorem-1} be \( q \opt \), define an ordered set \( (\partition{1}, \partition{2}, \ldots, \partition{T}) \subset (\indexes{\dimension}) \) iteratively by
\[
        \begin{aligned}
        \partition{1} & \Let 1, \\
        \partition{l} & \Let \min \{ t \; \vert \; \partition{(l - 1)} < t \leq \dimension, \; \dualvar{(t - 1)} \opt < \dualvar{t} \opt \} \quad  \text{for all \( l = 2,\ldots, T \),} 
        \end{aligned}
\]
and let \( (\eigval{i} \opt)_{i = 1}^{\dimension} \) be the non-increasing sequence of positive real numbers defined by
\[
(\eigval{i} \opt)_{i = 1}^{\dimension} \Let \eigvalmap{(\sqrt{\eigvalsig{i}})_{i = 1}^{\dimension}}{(\length{i}')_{i = 1}^{\dimension}}{(\partition{l})_{l = 1}^{T}},
\]
where the map \( \lambda \) is defined as in \eqref{eq:definition-of-lambda-a}-\eqref{eq:definition-of-lambda}.
Consider the optimization problem \eqref{eq:DL-problem}.
\begin{itemize}[label=\(\circ\), leftmargin=*]
\item \eqref{eq:DL-problem} admits an optimal solution consisting of an optimal dictionary \( ( \dict{i} \opt )_{i = 1}^{\dsize} \) and an optimal scheme \( \scheme \opt (\cdot) \). 

\begin{itemize}[label=\(\triangleright\), leftmargin=*]
\item A dictionary \( ( \dict{i} \opt )_{i = 1}^{\dsize} \) is an \( \ell_2 \)-optimal dictionary if and only if it satisfies:
\begin{equation}
\sum_{i = 1}^{\dsize} \dict{i} \opt { \dict{i} \opt } \transp = M \opt \Let \sum_{i = 1}^{\dimension} \eigval{i} \opt \; \eigvecsig{i} \eigvecsig{i} \transp,
\end{equation}
for some sequence of orthonormal eigenvectors \( (u\opt_i)_{i = 1}^{\dimension} \) of \( \sigmat{\randomvec} \) corresponding to eigenvalues \( (\eigvalsig{i})_{i = 1}^{\dimension} \).\footnote{It should be noted that multiple such sequences exist when there are multiple eigenvalues which are equal.}

\item The unique optimal scheme \( \scheme\opt (\cdot) \) corresponding to an optimal dictionary \( (\dict{i} \opt)_{i = 1}^{\dsize} \) is given by 
\[ 
\scheme\opt (v) \Let \pmat{ \dict{1}^*}{ \dict{2}^*}{ \dict{\dsize}^*}^+ v .
\]
\end{itemize}

\item The optimal value \( p \opt \) is given by
\[
 - q \opt = p \opt = \optval{(\sqrt{\eigvalsig{i}})_{i = 1}^{\dimension}}{(\length{i}')_{i = 1}^{\dimension}}{(\partition{l})_{l = 1}^{T}},
\]
where the map \( J \) is defined as in \eqref{eq:J-map-definition}. 
\end{itemize}

\end{theorem}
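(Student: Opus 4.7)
\emph{Plan.} The proof decouples the scheme optimization from the dictionary optimization and then reduces the resulting matrix problem to a scalar convex program over eigenvalues. For any feasible dictionary $\dictionary$ with $\Span(\dict{i})_{i=1}^{\dsize} = \R^\dimension$, the matrix $D \Let \pmat{\dict{1}}{\dict{2}}{\dict{\dsize}}$ has full row rank, so the inner minimization over $\scheme$ subject to $D\scheme(\randomvec) = \randomvec$ almost surely is a standard minimum-norm linear-system problem whose unique solution is $\scheme\opt(v) = D^+ v$. This yields the reduced cost $\trace\bigl((DD\transp)^{-1}\sigmat{\randomvec}\bigr) = \trace(M^{-1}\sigmat{\randomvec})$, where $M \Let DD\transp = \sum_i \dict{i}\dict{i}\transp$, and already establishes the explicit form of $\scheme\opt$ claimed in the theorem. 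The outer problem then becomes: minimize $\trace(M^{-1}\sigmat{\randomvec})$ over $M \in \posdef{\dimension}$ admitting a rank-$1$ decomposition with prescribed squared norms $(\length{i})_{i=1}^{\dsize}$.

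Next, the PSD rank-$1$ decomposition machinery of Section~\ref{section:Mathematical tools and other auxiliary results} characterizes feasibility of $M$: the decomposition exists iff the eigenvalue sequence of $M$, padded with zeros up to length $\dsize$, majorizes $(\length{i})_{i=1}^{\dsize}$. Collapsing the tail $\length{\dimension}, \ldots, \length{\dsize}$ into $\length{\dimension}'$ compresses this to the $n$-term relation $(\length{i}')_{i=1}^\dimension \majorize (\eigval{i}(M))_{i=1}^\dimension$. Applying Ruhe's trace inequality $\trace(M^{-1}\sigmat{\randomvec}) \geq \sum_i \eigvalsig{i}/\eigval{i}(M)$, with equality exactly when $M$ and $\sigmat{\randomvec}$ share an eigenbasis paired by size, allows me to restrict to $M = \sum_i \eigval{i} \eigvecsig{i}\eigvecsig{i}\transp$ and reduces the problem to the scalar convex program
\[
\minimize \; \sum_{i=1}^\dimension \frac{\eigvalsig{i}}{\eigval{i}} \quad \text{s.t.\ } \eigval{1} \geq \cdots \geq \eigval{\dimension} > 0, \; (\length{i}')_{i=1}^\dimension \majorize (\eigval{i})_{i=1}^\dimension .
\]

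Finally, convex duality closes the argument. Dualizing the partial-sum inequalities of the majorization with non-negative multipliers and parametrizing their cumulative sums by the variables $\dualvar{t}$ (whose non-negativity and monotonicity $0 \leq \dualvar{1} \leq \cdots \leq \dualvar{\dimension}$ encode dual feasibility), the inner unconstrained minimization over $\eigval{i}$ --- solved through the stationarity relation $\eigvalsig{i}/\eigval{i}^2 = \dualvar{i}^2$ --- collapses the Lagrangian into precisely the quadratic program \eqref{eq:dual-problem-in-DL-theorem-1}. Slater's condition guarantees strong duality, strict convexity gives uniqueness of the dual optimizer $(\dualvar{t}\opt)_{t=1}^\dimension$, and the plateaus on which $\dualvar{t}\opt$ is constant identify precisely the partition $(\partition{l})_{l=1}^T$ of the active majorization constraints. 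Block-wise KKT conditions on each plateau then pin $\eigval{i}\opt$ to the block-average formula $\eigvalmap{(\sqrt{\eigvalsig{i}})_{i=1}^\dimension}{(\length{i}')_{i=1}^\dimension}{(\partition{l})_{l=1}^T}$, and plugging back yields $p\opt = -q\opt = \optval{(\sqrt{\eigvalsig{i}})_{i=1}^\dimension}{(\length{i}')_{i=1}^\dimension}{(\partition{l})_{l=1}^T}$. An $\ell_2$-optimal dictionary is then obtained by applying the rank-$1$ decomposition theorem constructively to $M\opt = \sum_i \eigval{i}\opt \eigvecsig{i}\eigvecsig{i}\transp$ with prescribed norms $(\length{i})$. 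The principal obstacle is the convex-duality step: correctly dualizing the majorization inequalities, verifying strong duality in this somewhat non-standard form, and extracting the block structure of the dual optimizer that matches the map $\eigvalmap{\cdot}{\cdot}{\cdot}$ of the theorem exactly.
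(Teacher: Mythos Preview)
Your proposal is correct and follows the same overall architecture as the paper's proof: reduce over the scheme to the pseudo-inverse, pass to the matrix variable \(M=DD^\top\), characterize the feasible \(M\)'s via the rank-\(1\) decomposition/majorization result of Section~\ref{section:Mathematical tools and other auxiliary results}, reduce to a scalar program in the eigenvalues, and finish with the convex-duality analysis that is packaged in Lemma~\ref{lemma: primal dual}.

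The one genuine difference is how you handle the orthogonal variable. You invoke Ruhe's (von Neumann) trace inequality \(\trace(M^{-1}\sigmat{\randomvec})\ge\sum_i \eigvalsig{i}/\eigval{i}(M)\) with its equality case to immediately restrict to \(M\) commuting with \(\sigmat{\randomvec}\) with the size-matched pairing. The paper instead proves this step from scratch: it writes \(M=U\Lambda U^\top\), uses the Schur--Horn theorem and Kadison's lemma to place the diagonal of \(U^\top\sigmat{\randomvec}U\) inside the permutation polytope \(\permpolytope{\eigvalsig{}}\), solves the resulting linear program at an extreme point, and applies the rearrangement inequality (this is the passage \eqref{eq:optX}--\eqref{eq:optX3}). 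Your shortcut is cleaner; the paper's route is more self-contained and, via Lemmas~\ref{lemma:primal-dual-2}--\ref{lemma:primal-dual-3} and Claim~\ref{claim:optimalU}, is explicit about the ``if and only if'' direction when eigenvalues of \(\sigmat{\randomvec}\) are repeated---you should make sure your equality analysis of Ruhe's inequality delivers exactly that characterization of all optimal \(M\opt\). One small caution: after collapsing the tail into \(\length{\dimension}'\), the sequence \((\length{i}')_{i=1}^{\dimension}\) need not be non-increasing, so phrase the reduced constraint as the partial-sum inequalities of \eqref{eq: primalgeneral} rather than as a literal majorization; the paper does exactly this in going from \eqref{eq:DL-5} to \eqref{eq:DL-6} and then recovers the ordering of \((\eigval{i}\opt)\) a posteriori via Lemma~\ref{lemma:primal-dual-2}.
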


\subsection{Step II: \( \const \neq 0 \) and \(X_{\randomvec} (\const) \) is a strict nontrivial subspace of \( \R^{\dimension} \). }

Let \( X_{\randomvec} (\const) \) be any proper nontrivial subspace of \( \R^{\dimension} \). In this situation it is reasonable to expect that no optimal dictionary that solves \eqref{eq:DL-problem-general} contains elements that do not belong to \( X_{\randomvec} (\const) \). That this indeed happens is the assertion of the following Lemma, whose proof is provided in Section \ref{subsection:proof-of-lemma2}:
\begin{lemma}
	\label{lemma 2}
For every \( \const \neq 0 \), optimal solutions of problem \eqref{eq:DL-problem-general}, if any exists, are such that the optimal dictionary vectors \( ( \dict{i}\opt )_{i = 1}^{\dsize} \) satisfy \( \dict{i}\opt \in X_{\randomvec} (\const) \) for all \( i = \indexes{\dsize} \).
\end{lemma}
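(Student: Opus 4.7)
The plan is a projection-and-rescaling argument. Let $P$ denote the orthogonal projection of $\R^\dimension$ onto $X_V(c)$, and let $(c, D^*, f^*)$ be an optimal triplet of \eqref{eq:DL-problem-general}; write $r^*(v) \Let f^*(v)$ with components $(r_j^*(v))_{j=1}^K$. Decompose $d_j^* = P d_j^* + (I - P) d_j^*$ for each $j$. Suppose for contradiction some column $d_i^*$ fails to lie in $X_V(c)$; then $\|P d_i^*\| < \length{i}$ strictly, whereas $\|P d_j^*\| \le \length{j}$ in general.

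First I would construct a modified dictionary $\tilde D$ whose columns lie entirely in $X_V(c)$ and still have the prescribed lengths: put $\tilde d_j \Let (\length{j}/\|P d_j^*\|)\,P d_j^*$ when $P d_j^* \neq 0$, and let $\tilde d_j$ be any vector of $X_V(c)$ with $\|\tilde d_j\|=\length{j}$ otherwise. Feasibility $\Span(\tilde d_j)_{j=1}^K \supseteq R_V(c)$ is checked by projecting the identity $v - c = \sum_j r_j^*(v)\, d_j^*$ onto $X_V(c)$: since $v - c \in X_V(c)$, this yields $v - c = \sum_j r_j^*(v)\, P d_j^* \in \Span(\tilde d_j)_{j=1}^K$.

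A companion scheme $\tilde f \in \mathcal{F}(c, \tilde D)$ is then defined by $\tilde r_j(v) \Let r_j^*(v)\,\|P d_j^*\|/\length{j}$ when $P d_j^* \neq 0$ and $\tilde r_j(v) \Let 0$ otherwise, so that $\sum_j \tilde r_j(v)\, \tilde d_j = \sum_j r_j^*(v)\, P d_j^* = v - c$. The pointwise inequality
\[
\tilde r_j(v)^2 \;\le\; r_j^*(v)^2
\]
is immediate and is strict exactly when $r_j^*(v) \neq 0$ together with $d_j^* \notin X_V(c)$. Integrating against $\PP$ produces $\EE_\PP[\inprod{\tilde f(V)}{\tilde f(V)}] \le \EE_\PP[\inprod{f^*(V)}{f^*(V)}]$, and this contradicts optimality of $(c, D^*, f^*)$ as soon as the inequality is strict.

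The main obstacle is to rule out the degenerate case where the inequality above fails to be strict, which forces $r_i^*(V) = 0$ $\PP$-almost surely for every bad index $i$. For this I would invoke the pointwise minimum-norm characterization of the cost-optimal scheme at a fixed dictionary, namely $f^*(v) = (D^*)^+(v - c)$: the vanishing of the $i$-th coordinate on $R_V(c)$ forces the $i$-th row of $(D^*)^+$ to be orthogonal to $X_V(c)$, which via the identity $D^* (D^*)^+ = P_{\image D^*}$ yields $X_V(c) \subseteq \Span\{d_j^* : j \neq i\}$. A norm-preserving rotation of $d_i^*$ toward $X_V(c)$ (which is possible precisely because the remaining columns already span $X_V(c)$) then produces a feasible dictionary with strictly smaller pseudoinverse cost, delivering the desired contradiction and closing the argument.
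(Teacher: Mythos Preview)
Your projection-and-rescaling construction is exactly the argument the paper gives: decompose each $d_j^*$ into its $X_V(c)$-component, rescale that component to restore the prescribed length, and scale the scheme accordingly. (Minor slip: since $\inprod{d_j}{d_j}=\alpha_j$, the correct rescaling factor is $\sqrt{\alpha_j}/\|Pd_j^*\|$, not $\alpha_j/\|Pd_j^*\|$.) The paper then simply asserts the strict pointwise inequality $\|\tilde f(v)\|^2<\|f^*(v)\|^2$ ``due to the fact that $\|x_i\|<\sqrt{\alpha_i}$ for at least one $i$'', without confronting the possibility you explicitly raise: that $r_i^*(V)=0$ almost surely at every bad index $i$. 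So on this point you are more careful than the paper.

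Your resolution of that degenerate case, however, is not complete. Everything up to ``$X_V(c)\subseteq\Span\{d_j^*:j\neq i\}$'' is fine; the unjustified step is the last sentence, where you assert that rotating $d_i^*$ into $X_V(c)$ yields a \emph{strictly} smaller pseudoinverse cost. To close this, argue as follows. Since $r_i^*(V)=0$ a.s., the cost with $D^*$ equals the cost with the reduced dictionary $D^*_{-i}$, namely $\trace\bigl(\Sigma_{V-c}M^+\bigr)$ with $M\Let D^*_{-i}(D^*_{-i})^\top$. Appending any nonzero $\tilde d_i\in X_V(c)$ gives $\tilde M=M+\tilde d_i\tilde d_i^\top$, and a Sherman--Morrison computation on $\image(M)$ yields
\[
\trace\bigl(\Sigma_{V-c}M^+\bigr)-\trace\bigl(\Sigma_{V-c}\tilde M^+\bigr)=\frac{\tilde d_i^\top M^+\Sigma_{V-c}M^+\tilde d_i}{1+\tilde d_i^\top M^+\tilde d_i}.
\]
The numerator vanishes only if $M^+\tilde d_i\in\ker(\Sigma_{V-c})=X_V(c)^\perp$; but $M^+$ is positive definite on $\image(M)\supseteq X_V(c)$, so $\tilde d_i^\top M^+\tilde d_i>0$, which forces $M^+\tilde d_i$ to have a nonzero component along $\tilde d_i\in X_V(c)$. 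Hence the difference is strictly positive for \emph{any} nonzero $\tilde d_i\in X_V(c)$, and the contradiction follows.
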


Lemma \ref{lemma 2} allows us to replace the constraint that \( \Span ( \dict{i} )_{i = 1}^{\dsize} \supset R_{\randomvec} (\const) \) with \( \Span ( \dict{i} )_{i = 1}^{\dsize} = X_{\randomvec} (\const) \) without changing the optimum value (if it admits a solution). We are now in a position to give a complete solution to the \( \ell_2 \)-optimal dictionary problem. 

\begin{theorem}
\label{theorem:DL-theorem-2}
Let \( \mu \Let \EE_{\PP}[V] \), \( \sigmat{}\opt \Let \var(V) \) and \( (\eigvalsig{i} \opt)_{i = 1}^{\poseig} \) be the sequence of positive (non-zero) eigenvalues of \( \sigmat{}\opt \) arranged in non-increasing order. Let the sequence of positive real numbers \( (\length{i}')_{i = 1}^{\poseig} \) be defined by \( ( \length{i}' )_{i = 1}^{\poseig - 1} \Let ( \length{i} )_{i = 1}^{\poseig - 1} \) and \( \length{\poseig}' \Let \sum_{j = \poseig}^{\dsize} \length{j} \).
\begin{enumerate}[leftmargin=*, label = {\rm (\roman*)}, align=left, widest=ii]
\item The optimization problem:
\begin{equation}
\label{eq:QP-in-theorem-2}
\begin{aligned}
		& \minimize_{( \dualvar{t} )_t \subset \R}	&&  \sum_{t = 1}^{\poseig}  \length{t}' \dualvar{t}^2 - 2  \sqrt{\eigvalsig{t}} \dualvar{t}  \\
		& \sbjto						&&
		0 \leq \dualvar{1} \leq \cdots \leq \dualvar{\poseig},
\end{aligned}
\end{equation}
admits a unique optimal solution \( (\dualvar{t} \opt)_{t = 1}^{\poseig} \). 

\item Consider the \( \ell_2 \)-optimal dictionary problem \eqref{eq:DL-problem-general}. Iteratively define an ordered set \( (\partition{1}, \partition{2}, \ldots, \partition{L}) \subset (\indexes{\poseig}) \) by
        \begin{equation}
        \begin{aligned}
        \label{eq:partition-in-theorem}
        \partition{1} & \Let 1, \\
        \partition{l} & \Let \min \{ t \; \vert \; \partition{(l - 1)} < t \leq \dimension, \; \dualvar{(t - 1)} \opt < \dualvar{t} \opt \} \quad  \text{for all \( l = 2,\ldots, L \),} 
        \end{aligned}
        \end{equation}
and let \( (\eigval{i} \opt)_{i = 1}^{\poseig} \) be the non-increasing sequence of positive real numbers defined by
\[
(\eigval{i} \opt)_{i = 1}^{\poseig} \Let \eigvalmap{(\sqrt{\eigvalsig{}\opt})_{i = 1}^{\poseig}}{(\length{i}')_{i = 1}^{\poseig}}{(\partition{l})_{l = 1}^{L}},
\]
where \( \lambda \) is defined as in \eqref{eq:definition-of-lambda-a}-\eqref{eq:definition-of-lambda}.
\begin{itemize}[label=\(\circ\), leftmargin=*]
\item \eqref{eq:DL-problem-general} admits an optimal solution consisting of \(\const\opt \in \R^{\dimension} \), \( (\dict{i}\opt)_{i = 1}^{\dsize} \) and \( \optscheme : \R^{\dimension} \longrightarrow \R^{\dsize} \), satisfying:

\begin{itemize}[label=\(\triangleright\), leftmargin=*]
\item A dictionary \( ( \dict{i} \opt )_{i = 1}^{\dsize} \) is an \( \ell_2 \)-optimal dictionary if and only if it satisfies:
\begin{equation}
\sum_{i = 1}^{\dsize} \dict{i} \opt { \dict{i} \opt } \transp = M\opt \Let \sum_{i = 1}^{\poseig} \eigval{i}\opt \; \eigvecsig{i} \eigvecsig{i} \transp,
\end{equation}
for some sequence of orthonormal eigenvectors \( (u\opt_i)_{i = 1}^{\poseig} \) of \( \sigmat{}\opt \) corresponding to the eigenvalues \( (\eigvalsig{i}\opt)_{i = 1}^{\poseig} \).

\item The unique optimal scheme \( \scheme\opt (\cdot) \) corresponding to an optimal dictionary \( (\dict{i} \opt)_{i = 1}^{\dsize} \) is given by 
\[ 
\scheme\opt (v) \Let \pmat{ \dict{1}^*}{ \dict{2}^*}{ \dict{\dsize}^*}^+ v .
\]
\end{itemize}

\item The optimal value \( p \opt \) of \eqref{eq:DL-problem-general} is given by
\[
 p \opt = \optval{(\sqrt{\eigvalsig{i}})_{i = 1}^{\poseig}}{(\length{i}')_{i = 1}^{\poseig}}{(\partition{l})_{l = 1}^{L}} = - q \opt,
\]
where \( q\opt \) is the value of \eqref{eq:QP-in-theorem-2} and the mapping \( J \) is defined as in \eqref{eq:J-map-definition}.
\item Moreover, every optimal dictionary in \eqref{eq:DL-problem-general} can be computed via Algorithm \ref{algo:general-l2-opt-algo} in polynomial time. 
\end{itemize}
\end{enumerate}
\end{theorem}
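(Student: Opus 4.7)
The overall strategy is to reduce the setting of Theorem \ref{theorem:DL-theorem-2} to the Step I result, Theorem \ref{theorem:DL-theorem-1}, by first minimizing out the scheme and the centering vector \(\const\), and then restricting the problem to the subspace \(X_{\randomvec}(\mu)\). For any fixed \(\const\) and any feasible dictionary abbreviated by \(D \Let \pmat{\dict{1}}{\dict{2}}{\dict{\dsize}}\), the inner minimization over \(\scheme \in \mathcal{F}(\const, \dictionary)\) is a minimum-norm least-squares problem, whose unique solution is \(\scheme(v) = D^{+}(v - \const)\). Writing \(M \Let D D\transp\) and \(\mu \Let \EE_{\PP}[V]\), expanding \(V - \const = (V - \mu) + (\mu - \const)\) and taking expectations shows that the representation cost equals
\[
\trace\bigl(M^{+}\, \sigmat{}\opt\bigr) \;+\; (\mu - \const)\transp M^{+} (\mu - \const),
\]
the cross-term vanishing by definition of the mean.

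Next I argue that \(\const\opt = \mu\) is optimal. If \(\const - \mu \notin X_{\randomvec}(\mu)\), then feasibility \(\image(M) \supset X_{\randomvec}(\const)\) forces \(\image(M)\) to have dimension at least \(\poseig + 1\), where \(\poseig \Let \rank(\sigmat{}\opt)\); this spreads the fixed total squared length \(\sum_{i=1}^{\dsize} \length{i} = \trace(M)\) over additional directions and strictly inflates \(\trace(M^{+} \sigmat{}\opt)\). If instead \(\const - \mu \in X_{\randomvec}(\mu)\) with \(\const \neq \mu\), the feasible set of \(M\) coincides with that for \(\const = \mu\), but the quadratic correction \((\mu - \const)\transp M^{+} (\mu - \const)\) is strictly positive as soon as \(\image(M\opt) = X_{\randomvec}(\mu)\). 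Either way, \(\const\opt = \mu\) (modulo translations orthogonal to \(X_{\randomvec}(\mu)\)) is optimal, and the residual task is to minimize \(\trace(M^{+} \sigmat{}\opt)\) over Gram matrices of feasible dictionaries. Lemma \ref{lemma 2} then confines every optimal dictionary vector to \(X_{\randomvec}(\mu)\).

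This residual problem is precisely a Step I instance inside the \(\poseig\)-dimensional subspace \(X_{\randomvec}(\mu)\): the restricted random vector \(V - \mu\) has strictly positive-definite covariance with eigenvalues \((\eigvalsig{i}\opt)_{i = 1}^{\poseig}\), and feasible dictionaries are collections of prescribed lengths \((\length{i})_{i=1}^{\dsize}\) spanning \(X_{\randomvec}(\mu) \cong \R^{\poseig}\). Applying Theorem \ref{theorem:DL-theorem-1} with the ambient dimension \(\dimension\) replaced by \(\poseig\) then immediately yields: the QP \eqref{eq:QP-in-theorem-2} with its unique solution \((\dualvar{t}\opt)_{t = 1}^{\poseig}\), which is precisely part (i); the partition \((\partition{l})_{l = 1}^{L}\) defined by \eqref{eq:partition-in-theorem} and the sequence \((\eigval{i}\opt)_{i=1}^{\poseig}\); the characterization \(M\opt = \sum_{i = 1}^{\poseig} \eigval{i}\opt\, \eigvecsig{i} \eigvecsig{i}\transp\) of every optimal Gram matrix; the duality identity \(p\opt = -q\opt\); and the scheme formula \(\optscheme(v) = D\opt^{+} v\). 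Algorithm \ref{algo:general-l2-opt-algo} is obtained by stringing together the estimation of \(\mu\) and \(\sigmat{}\opt\), the diagonalization of \(\sigmat{}\opt\), a convex QP solve in \(\poseig\) variables, the assembly of \(M\opt\), and a rank-\(1\) decomposition of \(M\opt\) with the prescribed norms, the last of these being carried out in polynomial time via the majorization-based constructions developed in Section \ref{section:Mathematical tools and other auxiliary results}.

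The principal obstacle is the decoupling step of the second paragraph: because the feasibility constraint \(\image(M) \supset X_{\randomvec}(\const)\) couples \(M\) to \(\const\), one must analyse both the ``dimensional'' deviation \(\const - \mu \notin X_{\randomvec}(\mu)\) and the ``in-subspace'' deviation \(\const - \mu \in X_{\randomvec}(\mu) \setminus \{0\}\) in order to conclude that \(\const\opt = \mu\) is optimal. Once this decoupling is secured and Lemma \ref{lemma 2} is invoked, the remainder of Theorem \ref{theorem:DL-theorem-2} follows as a direct corollary of Theorem \ref{theorem:DL-theorem-1} applied in the \(\poseig\)-dimensional subspace \(X_{\randomvec}(\mu)\).
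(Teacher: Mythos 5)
Your route to the key fact \(\const\opt = \mu\) is genuinely different from the paper's. The paper projects into \(X_{\randomvec}(\const)\) and works with the eigenvalue optimization problems directly, invoking Wielandt's theorem (Theorem~\ref{theorem: eigenvalues of sum of hermitian matrices}) to show \(\eigvalsig{i}\opt \leq \eigvalsig{i}(\const)\) and then assembling a chain of inequalities through a rescaling factor \(\beta\). You instead observe the algebraic identity
\[
\EE_{\PP}\!\bigl[\norm{D^{+}(V - \const)}^2\bigr] \;=\; \trace\!\bigl(M^{+}\sigmat{}\opt\bigr) \;+\; (\mu-\const)\transp M^{+}(\mu-\const),
\]
with \(M = DD\transp\), which isolates the dependence on \(\const\) into a single rank-1 correction. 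This is a more elementary decomposition: it avoids Wielandt entirely, and the argument becomes "the correction is strictly positive when \(\const\neq\mu\), while the first term cannot be smaller than its \(\const=\mu\) optimum because the feasible set of \(M\) only shrinks as \(X_{\randomvec}(\const) \supseteq X_{\randomvec}(\mu)\)."

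Two of the details around that argument are, however, imprecise and should be tightened. First, in the case \(\const-\mu\notin X_{\randomvec}(\mu)\) you assert that forcing \(\rank(M)\geq\poseig+1\) "strictly inflates \(\trace(M^{+}\sigmat{}\opt)\)." That claim is not self-evident (the extra direction aligns with a zero eigenvalue of \(\sigmat{}\opt\), so the trace term need not strictly increase from the forced spreading of mass alone), and fortunately you do not need it: what is actually true and sufficient is that the feasible \(M\)-set for \(\const\neq\mu\) is contained in that for \(\const=\mu\), so \(\trace(M^+\sigmat{}\opt)\) is at least its \(\const=\mu\) optimum, while feasibility puts \(\mu-\const\in X_{\randomvec}(\const)\subseteq\image(M)\) and hence forces the correction term \((\mu-\const)\transp M^{+}(\mu-\const)\) to be strictly positive. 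Second, the parenthetical "\(\const\opt=\mu\) (modulo translations orthogonal to \(X_{\randomvec}(\mu)\))" is wrong: translations by a nonzero \(w\perp X_{\randomvec}(\mu)\) have \(w\in X_{\randomvec}(\const)\subseteq\image(M)\) by feasibility, hence the correction term is strictly positive for them as well, so \(\const=\mu\) is in fact the unique optimal centering — exactly as the paper concludes. With these two corrections your argument closes and is arguably cleaner than the paper's; the remaining reduction to Theorem~\ref{theorem:DL-theorem-1} in the \(\poseig\)-dimensional subspace, the invocation of Lemma~\ref{lemma 2}, the rank-\(1\) decomposition of \(M\opt\), and the algorithmic conclusion all match the paper.
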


\begin{corollary}
\label{corollary:equal-length-dictionary}
Consider the \( \ell_2 \)-optimal dictionary problem \eqref{eq:DL-problem-general} with its associated data. If \( \length{i} = 1 \) for all \( i = \indexes{\dsize} \), then a dictionary \( (\dict{i}\opt)_{i = 1}^{\dsize} \subset \R^{\dimension} \) is \( \ell_2 \)-optimal if and only if it satisfies
\[
\sum_{i = 1}^{\dsize} \dict{i}\opt {\dict{i}\opt}\transp = M\opt \Let \frac{K}{\trace({\sigmat{}\opt}^{1/2})} {\sigmat{}^*}^{1/2}.
\]
\end{corollary}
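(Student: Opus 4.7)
The plan is to specialize Theorem \ref{theorem:DL-theorem-2} directly to the case $\length{i} = 1$ for every $i = \indexes{\dsize}$. Under this hypothesis the truncated sequence $(\length{i}')_{i=1}^{\poseig}$ satisfies $\length{i}' = 1$ for $i = 1,\ldots,\poseig-1$ and $\length{\poseig}' = \dsize - \poseig + 1$; substituting these values into \eqref{eq:QP-in-theorem-2} produces a concrete quadratic program whose analysis controls everything else in the statement.

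The main step is to show that the unique optimum $(\dualvar{t}\opt)_{t=1}^{\poseig}$ of this specialized QP has all coordinates equal. Were one to drop the monotonicity constraint, the minimizer would be $\dualvar{t} = \sqrt{\eigvalsig{t}\opt}/\length{t}'$; for $t < \poseig$ this yields the non-increasing sequence $\sqrt{\eigvalsig{t}\opt}$, in direct conflict with $\dualvar{t} \leq \dualvar{t+1}$. I would argue from this that every inequality constraint in \eqref{eq:QP-in-theorem-2} is active at the optimum, forcing $\dualvar{1}\opt = \dualvar{2}\opt = \cdots = \dualvar{\poseig}\opt$. Substituting this common value reduces \eqref{eq:QP-in-theorem-2} to the scalar quadratic $\dsize\,\dualvar{}^2 - 2\,\trace\bigl({\sigmat{}\opt}^{1/2}\bigr)\dualvar{}$ (because $(\poseig-1)\cdot 1 + (\dsize-\poseig+1) = \dsize$), whose unique minimizer $\trace\bigl({\sigmat{}\opt}^{1/2}\bigr)/\dsize$ is strictly positive and hence feasible.

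With the optimum identified, the iterative rule in \eqref{eq:partition-in-theorem} terminates immediately at $L = 1$ with $\partition{1} = 1$ and $\partition{2} = \poseig + 1$, since no strict increase among the $\dualvar{t}\opt$ ever occurs. The map $\lambda$ of \eqref{eq:definition-of-lambda-a}--\eqref{eq:definition-of-lambda} then collapses to
\[
\eigval{i}\opt = \sqrt{\eigvalsig{i}\opt}\cdot\frac{\sum_{j=1}^{\poseig}\length{j}'}{\sum_{j=1}^{\poseig}\sqrt{\eigvalsig{j}\opt}} = \frac{\dsize\,\sqrt{\eigvalsig{i}\opt}}{\trace\bigl({\sigmat{}\opt}^{1/2}\bigr)},
\]
using $\sum_{j=1}^{\poseig}\length{j}' = \dsize$ and $\sum_{j=1}^{\poseig}\sqrt{\eigvalsig{j}\opt} = \trace\bigl({\sigmat{}\opt}^{1/2}\bigr)$. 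Plugging these $\eigval{i}\opt$ into the expression for $M\opt$ from Theorem \ref{theorem:DL-theorem-2} and recognizing $\sum_{i=1}^{\poseig}\sqrt{\eigvalsig{i}\opt}\,\eigvecsig{i}\eigvecsig{i}\transp = {\sigmat{}\opt}^{1/2}$ yields the asserted identity, while the ``if and only if'' character is inherited verbatim from Theorem \ref{theorem:DL-theorem-2}. There is no genuine obstacle here: the only care required is in correctly handling the active constraints of the QP; the rest is bookkeeping.
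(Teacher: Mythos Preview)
Your proposal is correct; the paper states this corollary without proof as an immediate consequence of Theorem~\ref{theorem:DL-theorem-2}, so your derivation is exactly the argument the paper leaves implicit. The one step you flag as needing care---that all inequality constraints in \eqref{eq:QP-in-theorem-2} are active---is indeed valid, since the unconstrained minimizers \(\sqrt{\eigvalsig{t}\opt}/\length{t}'\) are non-increasing in \(t\) (including at \(t=\poseig\), where \(\sqrt{\eigvalsig{\poseig}\opt}/(\dsize-\poseig+1)\le\sqrt{\eigvalsig{\poseig-1}\opt}\)), and a separable strictly convex objective whose unconstrained minimizers are ordered opposite to the isotonic constraint is optimized at the fully pooled constant solution.
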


\begin{remark}
Corollary \ref{corollary:equal-length-dictionary} is the main result of \cite[Theorem 2]{sheriff2016optimal}.
\end{remark}

\begin{algorithm}
\label{algo:general-l2-opt-algo}
\KwIn{The variance matrix $\sigmat{}\opt \in \possemdef{\dimension}$ and a number $\dsize \geq \poseig \Let \rank(\sigmat{}\opt)$.}
\KwOut{An \( \ell_2 \)-optimal dictionary-scheme pair \( \bigl( ( \dict{i}\opt )_{i = 1}^{\dsize}, \optscheme \bigr) \) that solves \eqref{eq:DL-problem-general}.}

\nl Compute the sequence of orthonormal eigenvectors \( (u_i)_{i = 1}^{\poseig} \) of \( \sigmat{}\opt \) corresponding to the eigenvalues \( (\eigvalsig{i}\opt)_{i = 1}^{\poseig} \).

\nl Solve the QP \eqref{eq:QP-in-theorem-2}, from the optimal solution \( (\dualvar{t} \opt)_{t = 1}^{\poseig} \) compute \( (\partition{1}, \partition{2}, \ldots, \partition{L}) \subset (\indexes{\poseig}) \) and \( (\eigval{i}\opt)_{i = 1}^{\poseig} \) as asserted by Theorem \ref{theorem:DL-theorem-2}.

\nl Define \( M\opt \Let \sum\limits_{i = 1}^{\poseig} \eigval{i}\opt u_i u_i\transp \), and get a rank-\( 1 \) decomposition \( (\dict{i}\opt)_{i = 1}^{\dsize} \) of \( M\opt \) from Algorithm \ref{algo: rank-1 decomposition} using \( M\opt \) and \( (\length{i})_{i = 1}^{\dsize} \) as inputs.

\nl Output th optimal dictionary \( (\dict{i}\opt)_{i = 1}^{\dsize} \) and the optimal scheme \( \optscheme (v) \Let { \pmat{\dict{1}\opt}{\dict{2}\opt}{\dict{\dsize}\opt} }^+ v. \)

\caption{A procedure to obtain $\ell_2$-optimal dictionaries.}

\end{algorithm}

\subsection{An example in \( \R^2 \)}
	\label{ex:1}
Let \( R_1 \), \( R_2 \) and \( R_3 \) be subsets of \( \R^2 \) given by
\begin{align*}
R_1 &\Let \{ (x \ y)\transp \in \R^2 \vert -2 \leq x \leq 0, \ 1 \leq y \leq 3 \}, \\
R_2 &\Let \{ (x \ y)\transp \in \R^2 \vert -6 \leq x \leq -2, \ -4 \leq y \leq -2 \}, \quad \text{and} \\
R_3 &\Let \{ (x \ y)\transp \in \R^2 \vert 2 \leq x \leq 4, \ 0 \leq y \leq 2 \}. 
\end{align*}
Let \( \randomvec \) be a random vector taking values in \( \R^2 \) distributed according to the density \( \rho_V \) given by
\[
\R^2 \ni v \longmapsto \rho_V (v) \Let \frac{1}{8} \mathds{1}_{R_1}(v) + \frac{1}{32} \mathds{1}_{R_2}(v) + \frac{1}{16} \mathds{1}_{R_3}(v) \in [0,1].
\]
From elementary calculations we find that the mean \( \mu \) and the variance \( \sigmat{}\opt \) of \( \randomvec \) are equal to \( \pmatr{-3/4\\ 1/2} \) and \(
\pmatr{6.7708 & 3.1250 \\ 3.1250 & 4.5833} \), respectively.

Suppose that our objective is to represent samples of \( \randomvec \) using an \( \ell_2 \)-optimal dictionary  as in \eqref{eq:DL-problem-general}. We consider the case of finding an \( \ell_2 \)-optimal dictionary of three vectors, with \( \length{1} = 2 \), \( \length{2} = 1 \) and \( \length{3} = 1 \).\footnote{The choice of \( \length{i} \)s in the examples is arbitrary.} We compute an \( \ell_2 \)-optimal dictionary \( \dictionary\opt = (\dict{1}\opt, \dict{2}\opt, \dict{3}\opt ) \) using Algorithm \ref{algo:general-l2-opt-algo}; the dictionary vectors are
\[
\dict{1}\opt = \pmatr{-0.4611\\ -1.3369}, \ \ \dict{2}\opt = \pmatr{1 \\ 0} \text{and} \ \ \dict{3}\opt = \pmatr{-1\\ 0},
\]
and the optimal \( \ell_2 \) cost of representation is 5.1444. The \( \ell_2 \)-optimal dictionary \( \dictionary\opt \) along with the distribution of \( \randomvec \) is depicted in Figure \ref{fig:2d-example}.
\begin{figure}[H]
\begin{center}
\begin{tikzpicture}[%
    scale=1,%
    IS/.style={blue, thick},%
    LM/.style={red, thick},%
    axis/.style={very thick, ->, >=stealth', line join=miter},%
    important line/.style={thick}, dashed line/.style={dashed, thin},%
    every node/.style={color=black},%
    dot/.style={circle,fill=black,minimum size=4pt,inner sep=0pt,%
        outer sep=-1pt},%
]

\shade[top color=black!60, bottom color=black!60,shading angle=0,line width=0.1mm] (-2,1) rectangle (0,3);
\coordinate (R1) at (-1,2);
\draw [fill=red] (R1) node {\( R_1 \)};

\shade[top color=black!25, bottom color=black!25,shading angle=0,line width=0.1mm] (-6,-4) rectangle (-2,-2);
\coordinate (R2) at (-4,-3);
\draw [fill=red] (R2) node {\( R_2 \)};

\shade[top color=black!40, bottom color=black!40,shading angle=0,line width=0.1mm] (2,0) rectangle (4,2);
\coordinate (R3) at (3,1);
\draw [fill=red] (R3) node {\( R_3 \)};

\coordinate (mu) at (-0.75,0.5);
\draw [fill=red] (mu)  circle(1pt) node [above right] {\( \mu \)};

\coordinate (d1) at (-0.4611-0.75,-1.3369+0.5);
\draw [fill=red] (d1) circle(1.5pt) node [below left] {\( d_1^* \)};
\draw [->, blue, thick] (mu) -- (d1);

\coordinate (d2) at (-1-0.75,0+0.5);
\draw [fill=red] (d2) circle(1.5pt) node [below left] {\( d_2^* \)};
\draw [->, blue, thick] (mu) -- (d2);

\coordinate (d3) at (1-0.75,0+0.5);
\draw [fill=red] (d3) circle(1.5pt) node [below right] {\( d_3^* \)};
\draw [->, blue, thick] (mu) -- (d3);

\draw[dashed] (0.366-0.75,-1.366+0.5) arc (-75:-125:1.4142);
\draw[dashed] (-1-0.75,0+0.5) arc (-180:0:1);

\coordinate (dummy1) at (0.1-0.75,-0.4619+0.5);
\draw [fill=red] (dummy1) node [below, scale = 0.55] {\( \sqrt{2} \)};
\draw [->, black, thin] (dummy1) -- (mu);
\coordinate (dummy2) at (0.15-0.75,-0.6928+0.5);
\coordinate (dummy3) at (0.3-0.75,-1.3856+0.5);
\draw [->, black, thin] (dummy2) -- (dummy3);

\coordinate (dummy3) at (0.7071-0.75,-0.7071+0.5);
\coordinate (dummy1) at (0.2828-0.75,-0.2828+0.5);
\coordinate (dummy2) at (0.4596-0.75,-0.4596+0.5);
\draw [fill=red] (dummy1) node [below right, scale = 0.55] {\( 1 \)};
\draw [->, black, thin] (dummy1) -- (mu);
\draw [->, black, thin] (dummy2) -- (dummy3);

\draw[->, dotted] (-6,0.5)--(4.25,0.5) node[right]{$x$};
\draw[->, dotted] (-0.75,-4.5)--(-0.75,3.75) node[above]{$y$};
\end{tikzpicture}
\caption{Example of an \( \ell_2 \)-optimal dictionary problem in \( \R^2 \).}
\label{fig:2d-example}
\end{center}
\end{figure}
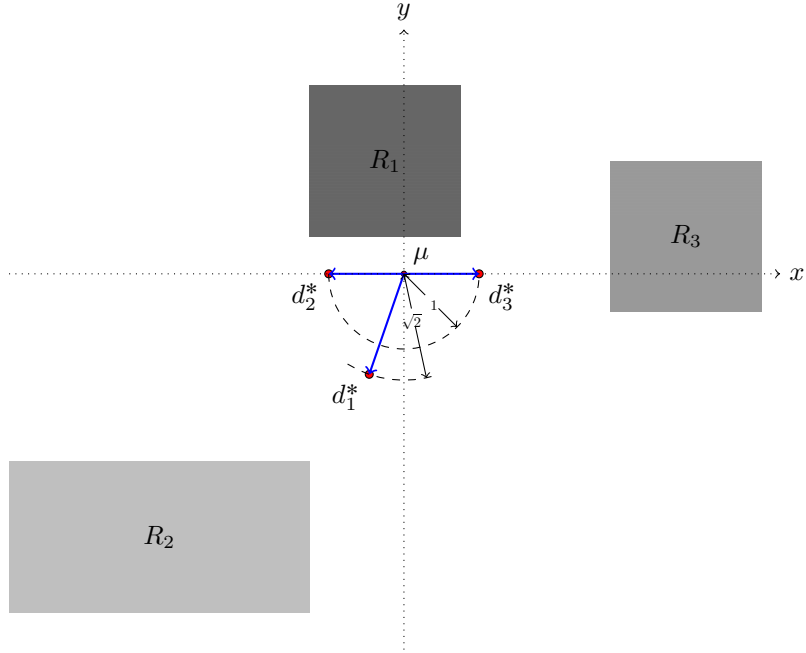

\subsection{An example in \( \R^3 \)}

Let \( \sigmat{} \in \posdef{3} \) and let \( Y \sim \mathcal{N}(0, \sigmat{}) \). Then \( V \Let \frac{Y}{\norm{Y}} \) is a random vector taking values on the unit sphere of \( \R^3 \), it is uniformly distributed when \( \sigmat{} = I_3 \) but not otherwise. We consider the \( \ell_2 \)-optimal dictionary problem \eqref{eq:DL-problem-general} for optimal representation of \( V \) using  dictionaries of four vectors. For \( \sigmat{} = \pmatr{3.5 & -2.5 & -2.1213 \\ -2.5 & 3.5 & 2.1213 \\ -2.1213 & 2.1213 & 6} \) a quick calculation gives 
\[
\var (\randomvec) = \pmatr{0.2855 & -0.1445 & -0.0919 \\ -0.1445 & 0.2855 & 0.0919 \\ -0.0919 & 0.0919 & 0.4300}. 
\]
An \( \ell_2 \)-optimal dictionary \( \dictionary\opt = (\dict{1}\opt, \dict{2}\opt, \dict{3}\opt, \dict{4}\opt) \) is computed via Algorithm \ref{algo:general-l2-opt-algo} for \( \length{1} = 8 \), \( \length{2} = 4 \), \( \length{3} = 2 \) and \( \length{4} = 1 \) and \( \sigmat{}\opt \Let \var (\randomvec) \); and the resulting dictionary vectors are:
\[
\dict{1}\opt = \pmatr{-\sqrt{2} \\ \sqrt{2} \\ 2}, \ \dict{2}\opt = \pmatr{1 \\ -1 \\ \sqrt{2}}, \ \dict{3}\opt = \pmatr{1 \\ 1 \\ 0}, \text{ and } \dict{4}\opt = \pmatr{\frac{-1}{\sqrt{2}} \\ \frac{-1}{\sqrt{2}} \\ 0}. 
\]

It can be easily verified that the eigenvalues of \( \sigmat{}\opt \) are \( 0.56 \), \( 0.3 \) and \( 0.1410 \) and \( u_1 = (\frac{-1}{2} \ \frac{1}{2} \ \frac{1}{\sqrt{2}})\transp \), \( u_2 = (\frac{-1}{2} \ \frac{1}{2} \ \frac{-1}{\sqrt{2}})\transp \) and \( u_3 = (\frac{1}{\sqrt{2}} \ \frac{1}{\sqrt{2}} \ 0)\transp \) are the corresponding eigenvectors. It is to be noted that the dominant dictionary vectors \( \dict{1}\opt \) and \( \dict{2}\opt \) are oriented towards the ``most probable'' directions, i.e., the directions of \( u_1 \) and \( u_2 \) respectively. 

\subsection{Uniform distribution over the unit sphere}
\label{uniform section}
We shall test our results on the important case of \( \mathsf{P} \) being the uniform distribution on the unit sphere. Note that due to (rigid) rotational symmetry of the distribution, it follows that rigid rotations of optimal dictionaries in this case are also optimal. Moreover, in the case of \( \mathsf{P} \) being uniform distribution over unit sphere, no direction in the space is prioritized over other. In such a case, it is expected that the \( \ell_2 \)-optimal dictionaries are maximally spread in the space. Such maximally spread out dictionaries are formally studied as \emph{tight frames} in the theory of frames.

We recall here some standard definitions for completeness and to provide the necessary substratum for our next result. Let $ \dimension, \dsize $ be positive integers such that $ \dsize \geq \dimension $. We say that a collection of vectors \( ( x_i )_{i = 1}^{\dsize} \) is a \emph{frame} for \( \R^{\dimension} \) if there exist some constants \( c, C > 0 \) such that
\[
	c \norm{x}^2 \leq \sum_{i = 1}^K \lvert \inprod{x_i}{x}^2 \leq C \norm{x} \rvert^2 \quad \text{for all $x \in \R^n$.}
\]
We say that a frame \( (x_i)_{i = 1}^{\dsize}\subset\R^{\dimension} \) is \emph{tight} if $c = C$. We have the following connection between \(\ell_2\)-optimal dictionaries and tight frames:
\begin{proposition}
\label{proposition:tight-frame-and-opt-dictionaries}
Consider the \( \ell_2 \)-optimal dictionary problem \eqref{eq:DL-problem-general} with \( \mathsf{P} \) being the uniform distribution over the unit sphere. If \( \length{1} \leq \frac{1}{\dimension} \sum_{i = 1}^{\dsize} \length{i} \), then a feasible dictionary is \( \ell_2 \)-optimal if and only if it is a tight frame of \( \R^{\dimension} \).
\end{proposition}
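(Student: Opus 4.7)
The plan is to specialize Theorem \ref{theorem:DL-theorem-2} to the uniform distribution on the unit sphere and show that the rotational symmetry forces the prescribed optimal matrix \(M\opt\) to be a scalar multiple of \(I_\dimension\); this identifies \(\ell_2\)-optimal dictionaries with tight frames of the given lengths. First, by rotational invariance of \(\mathsf{P}\), one has \(\mu = \EE[V] = 0\) and \(\sigmat{}\opt = \var(V) = \tfrac{1}{\dimension} I_\dimension\), so \(\poseig = \dimension\) and the nonzero eigenvalues of \(\sigmat{}\opt\) are all equal to \(\tfrac{1}{\dimension}\). Hence \(\sqrt{\eigvalsig{i}\opt} = \tfrac{1}{\sqrt{\dimension}}\) for every \(i\), and the QP in \eqref{eq:QP-in-theorem-2} reduces to
\[
\minimize \; \sum_{t=1}^{\dimension} \length{t}' \dualvar{t}^2 \; - \; \frac{2}{\sqrt{\dimension}} \sum_{t=1}^{\dimension} \dualvar{t} \quad \text{subject to } 0 \leq \dualvar{1} \leq \cdots \leq \dualvar{\dimension}.
\]

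The main step (and key obstacle) is to show that the unique optimal solution of this QP is the constant sequence \(\dualvar{t}\opt \equiv c\) with \(c \Let \sqrt{\dimension}\big/\sum_{i=1}^{\dsize}\length{i}\). I will argue via KKT: attaching multipliers \(\mu_s \geq 0\) to \(\dualvar{s-1}\leq \dualvar{s}\) for \(s = 2,\ldots,\dimension\), and \(\mu_1 \geq 0\) to \(\dualvar{1}\geq 0\), stationarity at the candidate \(\dualvar{t}\opt \equiv c\) yields, after telescoping,
\[
\mu_s = \frac{2}{\sqrt{\dimension}}\left( \frac{\dimension \sum_{t=s}^{\dimension}\length{t}'}{\sum_i \length{i}} - (\dimension - s + 1)\right), \qquad s = 1,\ldots,\dimension,
\]
which gives \(\mu_1 = 0\) automatically (using \(\sum_{t=1}^{\dimension}\length{t}' = \sum_i \length{i}\)). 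For \(s \geq 2\), I invoke the hypothesis \(\length{1} \leq \tfrac{1}{\dimension} \sum_i \length{i}\) together with \(\length{t}' = \length{t} \leq \length{1}\) for \(t \leq \dimension - 1\) to conclude \(\sum_{t=1}^{s-1}\length{t}' \leq (s-1)\length{1} \leq \tfrac{s-1}{\dimension}\sum_i\length{i}\); subtracting from \(\sum_i \length{i}\) yields the needed lower bound on \(\sum_{t=s}^{\dimension}\length{t}'\), hence \(\mu_s \geq 0\). Since Theorem \ref{theorem:DL-theorem-2} asserts uniqueness of the QP optimum, the constant sequence is indeed the optimal one.

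Since \(\dualvar{t}\opt\) is constant, the partition in \eqref{eq:partition-in-theorem} collapses to \(L = 1\) and \(\partition{1} = 1\). Substituting in \eqref{eq:definition-of-lambda} produces \(\eigval{i}\opt = \tfrac{1}{\dimension}\sum_{j=1}^{\dsize} \length{j}\) for every \(i = 1,\ldots,\dimension\), and orthonormality of \((u_i)_{i=1}^{\dimension}\) gives
\[
M\opt \;=\; \frac{1}{\dimension}\left(\sum_{j=1}^{\dsize}\length{j}\right) I_\dimension .
\]
By Theorem \ref{theorem:DL-theorem-2}, a feasible dictionary \((\dict{i}\opt)_{i=1}^{\dsize}\) is \(\ell_2\)-optimal if and only if \(\sum_{i=1}^{\dsize} \dict{i}\opt {\dict{i}\opt}\transp = M\opt\). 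To close, I recall that a collection \((x_i)_{i=1}^{\dsize}\) is a tight frame of \(\R^{\dimension}\) precisely when \(\sum_i x_i x_i\transp = c\, I_\dimension\) for some \(c > 0\); taking traces and using \(\norm{\dict{i}\opt}^2 = \length{i}\) pins \(c\) down to \(\tfrac{1}{\dimension}\sum_i\length{i}\). The \(\ell_2\)-optimality and tight-frame conditions therefore coincide, yielding the desired equivalence.
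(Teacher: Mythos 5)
Your proof is correct, but it takes a genuinely different route than the paper. The paper invokes Theorem~\ref{theorem:DL-theorem-1} and works directly with the eigenvalue optimization \eqref{eq:prop-tight-frames-1}, namely \(\min \sum_i 1/\eigval{i}\) subject to the majorization constraint \((\length{1},\ldots,\length{\dsize}) \majorize (\eigval{1},\ldots,\eigval{\dimension},0,\ldots,0)\). It then relaxes the majorization constraint to the single equality \(\sum_i \eigval{i} = \sum_i \length{i}\), observes (by convexity/AM--HM) that the constant sequence \(\eigval{i}\opt \equiv \tfrac{1}{\dimension}\sum_j\length{j}\) solves the relaxation, and finally checks that under the hypothesis \(\length{1}\leq\tfrac{1}{\dimension}\sum_i\length{i}\) this constant sequence is feasible for the original majorization constraint, hence also optimal there. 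You instead pass to the quadratic program \eqref{eq:QP-in-theorem-2}, posit the constant candidate \(\dualvar{t}\opt\equiv\sqrt{\dimension}/\sum_i\length{i}\), and verify the KKT conditions by telescoping the stationarity equations to get closed-form multipliers, using the hypothesis to establish dual feasibility \(\mu_s\geq 0\). Both arguments are sound and roughly the same length; the paper's relaxation trick is slightly slicker in that it sidesteps any multiplier bookkeeping and works purely with the primal variables, while your KKT computation has the virtue of producing the optimal dual multipliers explicitly, which makes the role of the hypothesis \(\length{1}\leq\tfrac{1}{\dimension}\sum_i\length{i}\) transparent as exactly the dual feasibility condition. (Minor remark: you invoke Theorem~\ref{theorem:DL-theorem-2}, whose surrounding discussion is framed under Step II, whereas the uniform-sphere case has \(X_{\randomvec}(0)=\R^{\dimension}\) and falls under Step~I / Theorem~\ref{theorem:DL-theorem-1}; the conclusions coincide, so this is a presentational rather than a logical issue.)
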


\subsection{Robustness of the \( \ell_2 \)-optimal dictionaries with respect to the estimation of the mean and variance of the distribution \( \mathsf{P} \).}
In practice, even though the mean and variance of a distribution can be estimated empirically from data to arbitrary precision, specifying their precise values is a difficult matter.Therefore, it is of  practical importance that the dictionary computed using the estimated distribution parameters performs well. To wit, if the estimation of the distribution parameters is good, the performance of the corresponding dictionary should be near optimal. To ensure this, we need continuity of the \( \ell_2 \)-optimal dictionary as a function of mean and variance of the distribution, a property that we shall now study.

Let \( \mu' \), \( \sigmat{}' \) be the estimated mean and variance of the distribution using sufficiently large number of samples drawn from \( \mathsf{P} \).  We have the following continuity result.

\begin{proposition}
\label{proposition:robustness-wrt-estimates}
Let \(  D(\mu', \sigmat{}') \Let \pmat{\dict{1}'}{\dict{2}'}{\dict{\dsize}'} \) be the dictionary computed via Algorithm \ref{algo:general-l2-opt-algo} using the estimated mean and variance. Let \( \scheme'(V) \) be the corresponding scheme of representation. Then the cost of representation \( J(\mu', \sigmat{}') \Let \EE_{\mathsf{P}} [\norm{\scheme'(V)}^2] \), of representing \( \randomvec \) with a constant of representation \( \mu' \) using the dictionary \( D(\mu', \sigmat{}') \) satisfies
\[
J(\mu', \sigmat{}') \xrightarrow[ \substack{ \mu' \rightarrow \mu, \\ \sigmat{}' \rightarrow \sigmat{}\opt }]{} J(\mu, \sigmat{}\opt),
\]
where \( \mu = \EE_{\mathsf{P}}[V] \) and \( \sigmat{}\opt = \var (V) \).
\end{proposition}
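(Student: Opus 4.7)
The plan is to rewrite \(J(\mu', \sigmat{}')\) as a trace pairing that depends continuously on \((\mu', \sigmat{}')\), and then invoke the spectral characterization of Theorem \ref{theorem:DL-theorem-2} to establish the required continuity of the Moore-Penrose inverse of the associated matrix \(M\).

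Write \(D' \Let \pmat{\dict{1}'}{\dict{2}'}{\dict{\dsize}'}\) and \(M' \Let D' (D')\transp\). Since \(\scheme'(v) = (D')^+(v - \mu')\) and the standard identity \((D')^+ = (D')\transp (M')^+\) gives \(\bigl((D')^+\bigr)\transp (D')^+ = (M')^+\), a direct computation using \(\EE_{\mathsf{P}}[(V-\mu')(V-\mu')\transp] = \sigmat{}\opt + (\mu-\mu')(\mu-\mu')\transp\) yields
\begin{align*}
J(\mu', \sigmat{}')
&= \EE_{\mathsf{P}}\bigl[(V-\mu')\transp (M')^+ (V-\mu')\bigr] \\
&= \trace\Bigl((M')^+ \bigl(\sigmat{}\opt + (\mu-\mu')(\mu-\mu')\transp\bigr)\Bigr).
\end{align*}
Theorem \ref{theorem:DL-theorem-2} asserts that \(M'\) depends on \(\sigmat{}'\) alone, through the eigen-decomposition of \(\sigmat{}'\) and the unique optimizer of the strictly convex QP \eqref{eq:QP-in-theorem-2}. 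Hence it suffices to show that \(\sigmat{}' \mapsto M(\sigmat{}')\) is continuous at \(\sigmat{}\opt\) and that the pseudo-inverse behaves well under the limit when paired with \(\sigmat{}\opt + (\mu-\mu')(\mu-\mu')\transp\).

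For the continuity of \(M(\cdot)\), the key ingredients are: the eigenvalues of \(\sigmat{}'\) depend continuously on \(\sigmat{}'\); the strictly convex QP has a unique optimizer \((\dualvar{t}\opt)\) which is continuous in its parameters \((\sqrt{\eigvalsig{t}'}, \length{t}')\); and within a common partition block \([\partition{l}, \partition{l+1})\) the induced coefficients \(\eigval{i}\opt\) produced by the map \(\lambda\) of \eqref{eq:definition-of-lambda-a}--\eqref{eq:definition-of-lambda} share a common ``block value'' dictated by the KKT conditions. In particular, whenever \(\eigvalsig{i}\opt = \eigvalsig{i+1}\opt\) at \(\sigmat{}\opt\), the corresponding \(\eigval{i}\opt\) and \(\eigval{i+1}\opt\) coincide, so the contribution of the eigenvalue-cluster to \(M(\sigmat{}') = \sum_i \eigval{i}\opt u_i u_i\transp\) reduces to a single coefficient times the spectral projector onto the whole eigenspace. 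Since spectral projectors onto isolated spectral subsets depend continuously on the matrix by standard analytic perturbation theory, this neutralises the usual obstruction coming from non-uniqueness of individual eigenvectors at multiplicities. A rank increase of \(\sigmat{}'\) relative to \(\sigmat{}\opt\) is absorbed harmlessly: the extra eigenvalues \(\sqrt{\eigvalsig{j}'} \to 0\) force \(\eigval{j}\opt \to 0\) through the formula for \(\lambda\), so the extra rank-\(1\) terms vanish continuously.

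Combining these observations, the image of \(M(\sigmat{}')\) coincides with the image of \(\sigmat{}'\), on which \(M(\sigmat{}')\) is uniformly positive definite for \(\sigmat{}'\) in a small neighbourhood of \(\sigmat{}\opt\); hence \((M(\sigmat{}'))^+\) depends continuously on \(\sigmat{}'\) in the operator norm of its restriction to this image. Since \((\mu-\mu')(\mu-\mu')\transp \to 0\) and the trace is a continuous linear functional, we conclude
\[
J(\mu', \sigmat{}') \;\longrightarrow\; \trace\bigl((M(\sigmat{}\opt))^+ \sigmat{}\opt\bigr) \;=\; J(\mu, \sigmat{}\opt).
\]
The main obstacle is precisely the continuity of \(M(\sigmat{}')\) across eigenvalue coincidences and rank jumps; the KKT-derived identity \(\eigval{i}\opt = \eigval{j}\opt\) within each block, together with the vanishing of the \(\eigval{}\)-weights attached to eigenvalues that disappear in the limit, is the ingredient that makes the perturbation argument go through.
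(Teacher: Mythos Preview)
Your approach is sound in spirit but differs from the paper's, and your rank-jump argument has a genuine gap.

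The paper does not argue via continuity of \(M(\Sigma')\) and its pseudo-inverse. Instead it writes
\[
J(\mu',\Sigma')=\sum_{i=1}^{\poseig}\frac{(u_i')\transp\,\Sigma_{V-\mu'}\,u_i'}{\lambda_i'}
\]
and decomposes the numerator as \((u_i')\transp\Sigma' u_i' + (u_i')\transp(\Sigma_{V-\mu'}-\Sigma\opt)u_i' + (u_i')\transp(\Sigma\opt-\Sigma')u_i'\). The first summand equals \(\sigma_i'\) \emph{exactly} (since \(u_i'\) is an eigenvector of \(\Sigma'\)), while the other two are bounded by matrix norms that tend to \(0\). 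This neatly sidesteps eigenvector continuity: one never needs \(u_i'\) to converge, only \(\sigma_i'\to\sigma_i\opt\) and \(\lambda_i'\to\lambda_i\opt\), both of which follow from continuity of eigenvalues and of the strictly convex QP. Your spectral-projector route reaches the same conclusion under the same hypothesis (fixed rank), but with more machinery; the paper's decomposition is the simpler device.

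Where your argument breaks is the rank-jump case. You correctly observe that \(\lambda_j'\to 0\) for the spurious indices \(j>\poseig\), so \(M(\Sigma')\to M(\Sigma\opt)\). But then \((M(\Sigma'))^+\) has eigenvalues \(1/\lambda_j'\to\infty\) in those directions, and the pseudo-inverse is \emph{not} continuous across a rank drop. The trace pairing with \(\Sigma\opt\) does not automatically tame this: take \(\Sigma\opt=\mathrm{diag}(1,0)\) and \(\Sigma'=R_\epsilon\,\mathrm{diag}(1,\epsilon^{5})\,R_\epsilon\transp\) with \(R_\epsilon\) a rotation by angle \(\epsilon\); then \((u_2')\transp\Sigma\opt u_2'\asymp\epsilon^{2}\) while \(\lambda_2'\asymp\epsilon^{5/2}\), so the corresponding term in \(J\) diverges. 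The paper avoids this entirely by assuming (first line of its proof) that \(\image(\Sigma')=\image(\Sigma\opt)\), i.e., constant rank; once that is in place, both your argument and the paper's go through. You should state this hypothesis explicitly rather than claim the rank jump is ``absorbed harmlessly''.
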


\section{Mathematical tools and other auxiliary results}
\label{section:Mathematical tools and other auxiliary results}
We shall discuss and establish some mathematical results in this section, which are essential for solving the \( \ell_2 \)-optimal dictionary problem.

\subsection{Majorization and rank-1 decomposition}
\label{subsection: majorization}
The theory of majorization, sometimes also referred to as Schur convexity, plays a crucial role in solving problem \( \ell_2 \)-optimal dictionary problem, together with a specific class of rank-\(1 \) decomposition of positive operators. We start with majorization: 

\begin{definition}
\label{definition: majorization}
Let \( \dsize \) be a positive integer and \( (a_i)_{i = 1}^{\dsize} \), \( (b_i)_{i = 1}^{\dsize} \) be two non-increasing sequences of real numbers. We say that the sequence \( (a_i)_{i = 1}^{\dsize} \) is \emph{majorized} by the sequence \( (b_i)_{i = 1}^{\dsize} \), written \( (a_i)_{i = 1}^{\dsize} \majorize (b_i)_{i = 1}^{\dsize} \),  if and only if 
\begin{equation}
\label{eq: majorization}
\begin{aligned}
        \sum_{i = 1}^\dsize a_i &= \sum_{i = 1}^\dsize b_i \; , \text{and}\\
        \sum_{i = 1}^m a_i &\leq \sum_{i = 1}^m b_i \quad \text{for every \( m = \indexes{( \dsize - 1 )} \).}
\end{aligned}
\end{equation}
\end{definition}

Recall that a map \( \pi : \{ \indexes{\dsize} \} \longrightarrow \{ \indexes{\dsize} \} \), is a called a \emph{permutation} map if it injective and let \( \Pi_{\dsize} \) be the set of all permutation maps on \( \{ \indexes{\dsize} \} \). The \emph{permutation polytope} \( \permpolytope{ (b_i)_{i = 1}^{\dsize}} \) of a sequence \( (b_i)_{i = 1}^{\dsize} \) is defined as the convex hull of  \( \{ ( b_{\pi(1)}, b_{\pi(2)}, \ldots, b_{\pi(\dsize)} )^{\transp} \vert \pi \in \Pi_{\dsize} \} \) in \( \R^{\dsize} \). The following two classical results in the theory of majorization are essential for us.

\begin{lemma}\cite[Lemma 5]{kadison2002pythagorean}
\label{lemma: Kadison}
Let \( K \) be a positive integer, \( (a_i)_{i = 1}^{\dsize} \) and \( (b_i)_{i = 1}^{\dsize} \) be two non-increasing sequences of real numbers and let \( \permpolytope{ (b_i)_{i = 1}^{\dsize}} \) be the permutation polytope of the sequence \( (b_i)_{i = 1}^{\dsize} \). Then the following statements are equivalent
\begin{itemize}[label=\(\circ\), leftmargin=*]
      \item \( (a_i)_{i = 1}^{\dsize} \in \permpolytope{ (b_i)_{i = 1}^{\dsize}} \),
      \item \( (a_i)_{i = 1}^{\dsize} \majorize (b_i)_{i = 1}^{\dsize} \).
\end{itemize}
\end{lemma}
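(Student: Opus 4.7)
The plan is to prove the two implications of the equivalence separately. The forward direction is an elementary averaging argument; the reverse direction is essentially the Hardy--Littlewood--P\'olya theorem and carries the bulk of the work.

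For $(a_i)_{i=1}^{\dsize} \in \permpolytope{(b_i)_{i=1}^{\dsize}} \Rightarrow (a_i) \majorize (b_i)$, I would write $(a_i) = \sum_{\pi \in \Pi_{\dsize}} \theta_\pi (b_{\pi(i)})_{i=1}^{\dsize}$ as a finite convex combination of permutations and verify the two defining conditions of majorization coordinate-wise. Equality of total sums is immediate from $\sum_i b_{\pi(i)} = \sum_i b_i$ for every $\pi$. For the partial-sum inequality at level $m \in \{1,\ldots,\dsize-1\}$, each $\sum_{i=1}^m b_{\pi(i)}$ is a sum over an $m$-element subset of $\{b_1,\ldots,b_{\dsize}\}$ and hence is bounded above by $\sum_{i=1}^m b_i$, since $(b_i)$ is sorted non-increasingly; averaging with weights $\theta_\pi$ yields $\sum_{i=1}^m a_i \le \sum_{i=1}^m b_i$.

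For the reverse direction $(a_i) \majorize (b_i) \Rightarrow (a_i) \in \permpolytope{(b_i)}$, I plan to argue by induction, reducing the problem in each step by a single ``$T$-transform.'' Given sorted $a \neq b$ with $a \majorize b$, let $j$ be the least index with $a_j < b_j$; the partial-sum inequality at level $j-1$ together with the minimality of $j$ forces $a_i = b_i$ for $i < j$. Let $k$ be the least index exceeding $j$ with $a_k > b_k$, whose existence follows from the equality of total sums, and observe $b_j > a_j \ge a_k > b_k$. Set $s \coloneqq \min\{b_j - a_j,\, a_k - b_k\}$ and $t \coloneqq s/(b_j - b_k) \in (0, 1]$, and define
\[
b' \coloneqq (1 - t)\, b + t\, \tau_{jk}\, b,
\]
where $\tau_{jk}$ swaps positions $j$ and $k$. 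Then $b' \in \permpolytope{(b_i)}$ by construction, $b'$ agrees with $b$ outside $\{j, k\}$, and the choice of $s$ forces at least one of $b_j' = a_j$ or $b_k' = a_k$. Let $\tilde b$ denote $b'$ rearranged non-increasingly; since $\permpolytope{(b_i)}$ is invariant under coordinate permutations, $\tilde b \in \permpolytope{(b_i)}$, and one checks that $a \majorize \tilde b$ and that $\tilde b$ disagrees with $a$ in strictly fewer coordinates than $b$ did. Iterating at most $\dsize - 1$ times reduces $b$ to $a$, exhibiting $a \in \permpolytope{(b_i)}$.

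The principal technical obstacle is verifying that $\tilde b$ continues to satisfy $a \majorize \tilde b$ and that the induction strictly reduces the number of coordinate disagreements at each step. The first point follows by checking $\sum_{i=1}^m a_i \le \sum_{i=1}^m b_i'$ in the three regimes $m < j$, $j \le m < k$, and $m \ge k$; only the middle case requires work, and it uses $a_i = b_i$ for $i < j$ together with $a_i \le b_i$ for $j < i < k$ (from the minimality of $k$), and the observation that sorting can only increase each partial sum. The second point exploits the fact that $a$ is itself sorted, so a coordinate of $b'$ newly matching some value $a_\ell$ remains matched to an equal value of $a$ after the rearrangement. An alternative, less constructive route that bypasses this bookkeeping would appeal to the Hardy--Littlewood--P\'olya theorem ($a \majorize b$ iff $a = S b$ for some doubly stochastic matrix $S$) together with the Birkhoff--von~Neumann theorem (every doubly stochastic matrix is a convex combination of permutation matrices), directly placing $a$ in $\permpolytope{(b_i)}$.
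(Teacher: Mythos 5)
The paper does not actually prove this lemma; it is cited as \cite[Lemma 5]{kadison2002pythagorean} and invoked as a standard fact, so there is no in-paper argument to compare against. Your forward implication is correct as written, and the alternative route you sketch at the end (Hardy--Littlewood--P\'olya together with Birkhoff--von Neumann) is a valid, standard proof of the reverse implication. However, the constructive T-transform argument you give for the reverse implication has a genuine gap.

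The problem is your choice of $j$ as the \emph{least} index with $a_j < b_j$. The classical T-transform argument (e.g., Marshall--Olkin, Lemma 2.B.1) takes $j$ to be the \emph{largest} such index and then $k$ the smallest index exceeding $j$ with $a_k > b_k$; with that choice one gets $a_i = b_i$ for every $j < i < k$, from which it follows that $b'$ is automatically sorted and agrees with $a$ in strictly more positions than $b$ did, so no sorting step and no ``displaced match'' bookkeeping is needed at all. With your choice of $j$ you only know $a_i \leq b_i$ for $j < i < k$, which is strictly weaker, and the post-transform sort can move the newly created match away from the position where it matched. Concretely, take $K = 3$, $b = (4, 3, 0)$, $a = (2.5, 2.4, 2.1)$, so that $a \prec b$. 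Your recipe gives $j = 1$, $k = 3$, $s = 1.5$, $t = 3/8$, and $b' = (2.5, 3, 1.5)$, with $b_1' = a_1$. Sorting produces $\tilde b = (3, 2.5, 1.5)$, which disagrees with $a$ in all three coordinates, the same count as $b$. So the invariant ``strictly fewer coordinate disagreements'' fails, and with it the claim that the iteration terminates in at most $K - 1$ steps. The sentence ``a coordinate of $b'$ newly matching some value $a_\ell$ remains matched to an equal value of $a$ after the rearrangement'' is exactly the false step: in the example $b_1' = 2.5 = a_1$ is moved to position $2$ by the sort, but $a_2 = 2.4 \neq 2.5$. The fix is to take $j$ maximal rather than minimal (and then observe that $b'$ is already sorted); alternatively, fall back to the HLP/Birkhoff argument you mention.
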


\begin{theorem}\cite[Theorem 5]{Schurhorn}(Schur-Horn theorem)
 \label{theorem: schur horn}
Let \( \dsize \) be a positive integer, and let \( (\length{i})_{i = 1}^\dsize \) and \( (\lambda_i)_{i = 1}^\dsize \) be two non-increasing sequences of real numbers. Then the following statements are equivalent
\begin{itemize}[label=\(\circ\), leftmargin=*]
\item There exists an Hermitian matrix \( M  \in \symmetric{\dsize} \) having eigenvalues \( (\lambda_i)_{i = 1}^\dsize \) and diagonal entries \( (\length{i})_{i = 1}^{\dsize} \).

\item \( (\length{i})_{i = 1}^{\dsize} \majorize (\lambda_i)_{i = 1}^\dsize \). 
\end{itemize}
\end{theorem}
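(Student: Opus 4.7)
The theorem is the classical Schur--Horn characterization, and it naturally splits into two implications of quite different flavors; I would handle them separately and, for the hard direction, lean on Lemma \ref{lemma: Kadison} which has already been imported. The forward implication is Schur's (1923) observation; the reverse is Horn's (1954) construction, and it is the one I expect to fight with.

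For the direction ``matrix exists \(\Rightarrow\) majorization,'' I would diagonalize the Hermitian \(M = U \diag(\lambda_i) U^\ast\) with \(U\) unitary. The \(i\)-th diagonal entry is then \(\length{i} = \sum_j |U_{ij}|^2 \lambda_j\), so if we set \(P_{ij} \Let |U_{ij}|^2\) the matrix \(P\) is doubly stochastic because \(U\) is unitary. By Birkhoff's theorem \(P\) is a convex combination of permutation matrices, hence the vector \((\length{i})_{i = 1}^{\dsize}\) lies in the convex hull of \(\{(\lambda_{\pi(i)})_{i=1}^{\dsize} \mid \pi \in \Pi_{\dsize}\}\), i.e.\ in the permutation polytope \(\permpolytope{(\lambda_i)_{i=1}^{\dsize}}\). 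Lemma \ref{lemma: Kadison} then gives \((\length{i})_{i = 1}^{\dsize} \majorize (\lambda_i)_{i = 1}^{\dsize}\). This argument is self-contained and only uses Birkhoff plus the already-quoted Kadison lemma.

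For the converse direction I would proceed by induction on \(\dsize\). The base case \(\dsize = 1\) is trivial since majorization forces \(\length{1} = \lambda_1\). For the inductive step, assuming \((\length{i}) \majorize (\lambda_i)\) with both sequences non-increasing, one shows that the interlacing conditions \(\lambda_\dsize \le \length{\dsize} \le \lambda_1\) hold; choose an index so that \(\lambda_k \le \length{\dsize} \le \lambda_{k'}\) for adjacent eigenvalues. One then constructs a \(2 \times 2\) real orthogonal rotation acting on the \((k, k')\) block of \(\diag(\lambda_i)\) that places \(\length{\dsize}\) in one of the two diagonal positions of that block while preserving the spectrum. Deflating the resulting matrix by deleting that row and column produces a \((\dsize-1) \times (\dsize-1)\) Hermitian matrix whose spectrum is the remaining \(\dsize - 1\) numbers (some combination of the original \(\lambda_i\)'s and the other rotated diagonal value), and a routine verification shows the new target diagonals \((\length{i})_{i=1}^{\dsize-1}\) are still majorized by the new spectrum; the inductive hypothesis completes the construction. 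An alternative, perhaps cleaner, route would invoke Lemma \ref{lemma: Kadison} directly: the majorization hypothesis places \((\length{i})\) in \(\permpolytope{(\lambda_i)}\), write \((\length{i}) = \sum_s t_s (\lambda_{\pi_s(i)})\) with \(t_s \ge 0\), \(\sum_s t_s = 1\), and then realize this convex combination as the diagonal of \(\sum_s t_s P_{\pi_s} \diag(\lambda_i) P_{\pi_s}^\top\)---but this naive average changes the spectrum, so the averaging must be carried out unitarily via a product of well-chosen Givens rotations; this essentially reduces back to the \(2 \times 2\) rotation step.

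The main obstacle, as expected, is the reverse direction: finding the correct \(2 \times 2\) rotation and bookkeeping that the reduced sequences still satisfy the majorization inequalities. The subtlety lies in choosing the index \(k\) and the rotation angle so that after deflation the non-increasing ordering and the partial-sum inequalities \(\sum_{i=1}^m \length{i} \le \sum_{i=1}^m \lambda_i\) are preserved on the smaller-dimensional problem. Everything else---the spectral calculation, the doubly stochastic observation, and the application of Lemma \ref{lemma: Kadison}---is routine once this combinatorial step is handled carefully.
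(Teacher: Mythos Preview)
The paper does not supply its own proof of this statement: Theorem~\ref{theorem: schur horn} is quoted verbatim from the literature (as \cite[Theorem 5]{Schurhorn}) and used as a black box, with no argument given anywhere in Sections~\ref{section:proofs-of-main-results} or~\ref{section:proof-of-auxiliary-results}. So there is nothing to compare against.

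That said, your sketch is the standard two-part argument and is essentially correct. The forward direction via the doubly-stochastic matrix \(P_{ij} = |U_{ij}|^2\), Birkhoff, and Lemma~\ref{lemma: Kadison} is exactly Schur's original observation and needs no further work. For the reverse direction your inductive plan with a single \(2\times 2\) Givens rotation is Horn's construction; the only imprecision is in the phrase ``choose an index so that \(\lambda_k \le \length{\dsize} \le \lambda_{k'}\) for adjacent eigenvalues''---you want the largest index \(k\) with \(\lambda_k \ge \length{\dsize}\), rotate in the \((k,\dsize)\) plane to put \(\length{\dsize}\) in the \((\dsize,\dsize)\) slot, and then check that \((\length{i})_{i=1}^{\dsize-1}\) is majorized by the new spectrum \((\lambda_1,\ldots,\lambda_{k-1},\lambda_k+\lambda_{\dsize}-\length{\dsize},\lambda_{k+1},\ldots,\lambda_{\dsize-1})\). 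That verification is the bookkeeping you flagged, and it does go through; once it is written out the induction closes. Your side remark that the naive convex-combination route via Lemma~\ref{lemma: Kadison} fails because it destroys the spectrum is also correct and worth keeping as a cautionary note.
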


In concern to the \( \ell_2 \)-optimal dictionary problem \eqref{eq:DL-problem-general} we need the following special form of the Schur-Horn theorem inspired from \cite[Proposition 3.1]{casazza2002frames}:  

\begin{lemma}
\label{lemma: orthonormal basis}
Let \( \dsize \) be a positive integer, and let \( (\length{i})_{i = 1}^{\dsize} \) be a non-increasing sequence of real numbers. Let \( \linearmap : \R^{\dsize} \lra \R^{\dsize}\) be a linear map and \( (\lambda_i)_{i = 1}^{\dsize} \) be the non-increasing sequence of the eigenvalues of \( \frac{1}{2} (\linearmap + \linearmap^{\transp}) \). Then the following statements are equivalent
\begin{itemize}[label=\(\circ\), leftmargin=*]
\item  \( (\length{i})_{i = 1}^{\dsize} \majorize (\lambda_i)_{i = 1}^{\dsize} \)

\item There exists an orthonormal basis \( ( \orthobasis{i} )_{i = 1}^\dsize \) to \( \R^\dsize \) such that 
    \[
    \inprod{ \orthobasis{i} }{ \linearmap \orthobasis{i} } = \length{i} \quad \text{for all \(i = \indexes{\dsize} \)}.
    \]
    Moreover, such an orthonormal basis can be obtained from Algorithm \ref{algo: orthonormal basis}.
\end{itemize}  
\end{lemma}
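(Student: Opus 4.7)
The plan is to reduce the statement to the classical Schur--Horn theorem (Theorem \ref{theorem: schur horn}) by a symmetrization observation, and then handle the constructive part (the algorithm) through the explicit inverse of the Schur--Horn construction.

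The first and crucial observation is that the quadratic form \( x \mapsto \inprod{x}{\linearmap x} \) depends only on the symmetric part of \(\linearmap\): for every \( x \in \R^{\dsize} \),
\[
\inprod{x}{\linearmap x} = \tfrac{1}{2}\bigl(\inprod{x}{\linearmap x} + \inprod{\linearmap x}{x}\bigr) = \inprod{x}{\tfrac{1}{2}(\linearmap+\linearmap\transp) x},
\]
because \( \tfrac{1}{2}(\linearmap-\linearmap\transp) \) is skew-symmetric and thus contributes zero. Hence, writing \( S \Let \tfrac{1}{2}(\linearmap+\linearmap\transp) \), the conclusion \( \inprod{\orthobasis{i}}{\linearmap\orthobasis{i}} = \length{i} \) for an orthonormal basis \( (\orthobasis{i})_{i=1}^{\dsize} \) is equivalent to the statement that the symmetric matrix \( Q\transp S Q \), where \( Q \) is the orthogonal matrix whose columns are the \( \orthobasis{i} \)'s, has diagonal entries \( (\length{i})_{i=1}^{\dsize} \). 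Its eigenvalues are precisely \( (\eigval{i})_{i=1}^{\dsize} \), the eigenvalues of \( S \), since orthogonal conjugation preserves the spectrum.

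Given this reduction, both directions are straightforward applications of Theorem \ref{theorem: schur horn}. For ``\( \Leftarrow \)'': assuming \( (\length{i}) \prec (\eigval{i}) \), Schur--Horn furnishes a symmetric matrix \( M \in \symmetric{\dsize} \) with eigenvalues \( (\eigval{i})_{i=1}^{\dsize} \) and diagonal entries \( (\length{i})_{i=1}^{\dsize} \). Spectrally decomposing \( S = U_S D U_S\transp \) and \( M = U_M D U_M\transp \) with \( D = \diag(\eigval{1},\ldots,\eigval{\dsize}) \), set \( Q \Let U_S U_M\transp \); then \( Q \) is orthogonal and \( Q\transp S Q = U_M U_S\transp \,U_S D U_S\transp\, U_S U_M\transp = M \), so the orthonormal basis \( \orthobasis{i} \Let Q \delta_i \) (columns of \(Q\)) satisfies \( \inprod{\orthobasis{i}}{\linearmap\orthobasis{i}} = \inprod{\orthobasis{i}}{S\orthobasis{i}} = M_{ii} = \length{i} \), as required. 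For ``\( \Rightarrow \)'': if such a basis exists, then (by the observation above) \( Q\transp S Q \) is a symmetric matrix with eigenvalues \( (\eigval{i}) \) and diagonal \( (\length{i}) \), so Schur--Horn immediately yields the majorization.

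The main obstacle, and the part that requires actual work, is the constructive claim: that the basis \( (\orthobasis{i})_{i=1}^{\dsize} \) can be produced by Algorithm \ref{algo: orthonormal basis}. The existence proof above is non-constructive because it invokes Schur--Horn as a black box. To make it algorithmic, I would follow the Givens-rotation style proof of Schur--Horn (as in Horn's original argument, also used in the frame constructions of \cite{casazza2002frames}): starting from the diagonal matrix \( D \) in the eigenbasis of \( S \), one identifies an index \( j \) with \( D_{jj} \ge \length{\dsize} \ge D_{kk} \) for some other \( k \), and applies a \( 2\times 2 \) rotation in the \( (j,k) \)-plane of a carefully chosen angle so that the \( (\dsize,\dsize) \) diagonal entry of the rotated matrix equals \( \length{\dsize} \); such an angle exists precisely because \( \length{\dsize} \) lies between \( D_{jj} \) and \( D_{kk} \), and this is ensured by the majorization inequalities. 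After peeling off the last coordinate one recurses on the \( (\dsize-1)\times(\dsize-1) \) leading principal block, whose eigenvalues and target diagonal entries still majorize each other. Accumulating these Givens rotations produces the orthogonal matrix \( Q \), whose columns are the desired basis \( (\orthobasis{i})_{i=1}^{\dsize} \). The termination and correctness of this iterative scheme rest on a routine induction on \( \dsize \), using the preservation of majorization under the peeling step; this is precisely what Algorithm \ref{algo: orthonormal basis} implements, and it runs in polynomial time in \( \dsize \).
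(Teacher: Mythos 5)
Your proposal takes essentially the same approach as the paper: the opening symmetrization observation that \(\inprod{x}{\linearmap x}=\inprod{x}{\tfrac{1}{2}(\linearmap+\linearmap\transp)x}\), and the reduction of the equivalence to Theorem \ref{theorem: schur horn} by conjugating \(\tfrac{1}{2}(\linearmap+\linearmap\transp)\) with an orthogonal matrix, are exactly the paper's argument. Your sketch of the constructive part---iteratively applying a rotation in a two-dimensional coordinate plane so that one diagonal entry hits a prescribed target, then recursing on the remaining block---is likewise the same strategy implemented by Algorithm \ref{algo: orthonormal basis}, modulo the inessential choice of peeling from the smallest target \(\length{\dsize}\) rather than the largest \(\length{1}\).

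However, the proposal does not actually carry out the part that the paper identifies as the real content of the proof, namely the claim that after each peeling step the remaining Rayleigh quotients and remaining target values still satisfy the majorization relation. This is not a routine induction: first, the choice of which pair of indices to rotate is constrained --- in the paper's notation one must pair \(\uvec{1}\) (the largest Rayleigh quotient) with the specific index \(i\) satisfying \(\bval{i}\leq \aval{1}<\bval{(i-1)}\), and an arbitrary pair \((j,k)\) with \(D_{jj}\geq\length{\dsize}\geq D_{kk}\), as your sketch allows, need not preserve majorization for the remaining block; second, verifying that the residual Rayleigh quotient \(\bval{1}+\bval{i}-\aval{1}\) slots into the sorted order without violating any of the partial-sum inequalities requires a case analysis (the paper splits on whether \(\bval{1}+\bval{i}-\aval{1}\leq\bval{(i-1)}\) or not). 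That verification occupies the bulk of the paper's proof of this lemma, so asserting it as ``routine'' leaves the essential step open.
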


\begin{algorithm}
\label{algo: orthonormal basis}
\caption{Calculation of orthonormal bases \`a la Lemma \ref{lemma: orthonormal basis}}
\KwIn{A linear map \( \linearmap : \R^{\dsize} \lra \R^{\dsize} \) and a non-increasing sequence \( ( \length{i} )_{i = 1}^{\dsize} \) of positive real numbers.}
\KwOut{ An orthonormal collection of vectors \( ( \orthobasis{i} )_{i = 1}^{\dsize} \subset \R^{\dsize} \) such that \( \inprod{ \orthobasis{i} }{ \linearmap \orthobasis{i} } = \length{i} \) for all \( i = \indexes{\dsize} \).}

\nl Compute \( \frac{1}{2} (\linearmap + \linearmap^{\transp}) \).

\nl Compute the eigenvectors \( (\eigvec{i})_{i = 1}^{\dsize} \subset \R^{\dsize} \) of \( \frac{1}{2} (\linearmap  + \linearmap^{\transp}) \).

\nl Initialize the following quantities by 
\begin{align*}
( \uvec{i}(1) )_{i = 1}^{\dsize} & \Let \linearmap \sort \big\{ (\eigvec{i})_{i = 1}^{\dsize} \big\} ,\footnotemark[5] \\
( \aval{i}(1) )_{i = 1}^{\dsize} & \Let (\length{i})_{i = 1}^{\dsize} . 
\end{align*}

\nl \textbf{for} \( t \) from \( 1 \) to \( \dsize \) \textbf{do} 

\quad Define \( \bval{1} \Let \inprod{\uvec{1}(t)}{\uvec{1}(t)}. \)

\quad \textbf{case 1} : If \( \bval{1} = \aval{1}(t) \),

\quad \quad update
\begin{equation}
\begin{aligned}
\label{eq: case 1 iteration update}
\orthobasis{t} \; & \Let \; \uvec{1}(t), \\
\big( \uvec{i}(t + 1) \big)_{i = 1}^{(\dsize - t)} \; & \Let \; \linearmap \sort  \big\{ (\uvec{(i + 1)}(t))_{i = 1}^{( \dsize - t )} \big\}, \\
(\aval{i}(t + 1))_{i = 1}^{(\dsize - t)} \; & \Let \; (\aval{(i + 1)}(t))_{i = 1}^{(\dsize - t)} .
\end{aligned}
\end{equation}

\quad \textbf{case 2} : If \( \bval{1} > \aval{1}(t) \),

\quad \quad find \( i \) such that \( 1 < i \leq (\dsize - t + 1) \), and
\[
\bval{i} \Let \inprod{\uvec{i}(t)}{\linearmap \uvec{i}(t)} \quad \leq \; \aval{1}(t) \; < \quad \inprod{\uvec{(i - 1)}(t)}{\linearmap \uvec{(i - 1)}(t)}.
\]

\quad \quad Define, 
\begin{equation}
\begin{aligned}
\label{eq: defining theta, x ,v}
\Theta & \Let \frac{\sqrt{\aval{1}(t) - \bval{i}}}{\sqrt{\aval{1}(t) - \bval{i}} + \sqrt{\bval{1} - \aval{1}(t)} } , \\
v & \Let \frac{(1 - \Theta) \uvec{1}(t) - \Theta \uvec{i}(t)}{\sqrt{\Theta^2 + (1 - \Theta)^2}}.
\end{aligned}
\end{equation} 

\quad \quad Update,
\begin{equation}
\begin{aligned}
\label{eq: case 2 iteration update}
\orthobasis{t} \; & \Let \; \frac{\Theta \uvec{1}(t) + (1 - \Theta) \uvec{i}(t)}{\sqrt{\Theta^2 + (1 - \Theta)^2}}, \\
\big( \uvec{i}(t + 1) \big)_{i = 1}^{(\dsize - t)} \; & \Let \; \linearmap \sort  \big\{ \{ \uvec{i}(t) \}_{i = 1}^{( \dsize - t + 1 )} \; \setminus \{ \uvec{1}(t), \uvec{i}(t) \} \; \cup \; \{ v \} \big\}, \\
(\aval{i}(t + 1))_{i = 1}^{(\dsize - t)} \; & \Let \; (\aval{(i + 1)}(t))_{i = 1}^{(\dsize - t)} .
\end{aligned}
\end{equation}

\nl \textbf{end for loop}

\nl Output \( (\orthobasis{t})_{t = 1}^{\dsize} \).
\end{algorithm}
\footnotetext[5]{Please see \eqref{eq:definition-of-Asort} for the definition of \( \linearmap \sort \).}

The main outcome of Lemma \ref{lemma: orthonormal basis} is a specific class of rank-\(1\) decompositions of non negative definite matrices which is used directly in solving \eqref{eq:DL-problem-general}. While these facts will be essential to establish our main results, they are also of independent interest.

It is a standard result in the theory of matrices \cite[p.\ 2]{bhatiapositive}, that a non-negative definite matrix \( M \in \posdef{n} \) with real entries can be decomposed as 
\begin{equation}
\label{decomposition}
   	M = \sum_{i = 1}^{\dsize} \rodvec{i} \rodvec{i}^{\transp},
\end{equation}
where \( \dsize \geq r \Let \rank(M) \), and \( (\rodvec{i})_{i = 1}^{\dsize} \) is a sequence of vectors in \( \R^n \). Since \( \rodvec{i} \rodvec{i}^{\transp} \) is a matrix of rank one and \( M \) is expressed as a sum of such matrices each of rank one, we say that \eqref{decomposition} is a \emph{rank-1 decomposition} of \( M \) into the sequence \( (\rodvec{i})_{i = 1}^{\dsize} \).

There are numerous rank-1 decompositions of non-negative definite matrices, each fine tuned for some specific purpose; see e.g., \cite[Theorem 7.3]{zhangmatrix}. With respect to the \( \ell_2 \)-optimal dictionary problem \eqref{eq:DL-problem-general} and its connections to rank-1 decomposition of non-negative definite matrices, we need an answer to the following question: Suppose that a non-negative definite matrix \( M \)  and a non-increasing sequence \( (\length{i})_{i = 1}^{\dsize} \) of positive real numbers are given. Does there exist a rank-1 decomposition of  \( M \) as \( M = \sum_{i = 1}^{\dsize} \rodvec{i} \rodvec{i}\transp \) for some vectors \( (\rodvec{i})_{i = 1}^{\dsize} \subset \R^{\dimension} \) satisfying 
\[
\inprod{\rodvec{i}}{\rodvec{i}} = \length{i} \quad \text{for all \( i = \indexes{\dsize}  \)}?
\]
This question was answered using the Schur-Horn theorem \cite[Theorem 2.1]{casazza2002frames} by providing necessary and sufficient conditions for the existence of such a rank-1 decomposition. We restate this result for easy reference:
\begin{theorem}\cite[Theorem 2.1]{casazza2002frames}
\label{theorem: casazza rod}
Let \( \dimension \) be a positive integer, \( M \in \possemdef{n} \) and \( ( \length{i} )_{i = 1}^{\dsize} \) be a non-increasing sequence of positive real numbers with \( \dsize \geq r \Let \rank(M)\). Let \( ( \eigval{i} )_{i = 1}^r \) be the sequence of the nonzero eigenvalues of \( M \) arranged in non-increasing order. Then the following statements are equivalent:
\begin{itemize}[label = \( \circ \), leftmargin=*]
\item There exists a rank-1 decomposition of \( M \) into a sequence \( (\rodvec{i})_{i = 1}^{\dsize} \) that satisfies 
\begin{equation}
\label{equation: rank-1 decomposition lengths}
   \inprod{\rodvec{i}}{\rodvec{i}} = \length{i} \quad \text{for all \( i = \indexes{\dsize}, \)}.
\end{equation}

\item The sequences \( ( \length{i} )_{i = 1}^{\dsize} \) and \( ( \eigval{i} )_{i = 1}^r \) satisfy
\begin{equation}
\label{eq:rod-majorization-condition}
 \begin{aligned}
        \sum_{i = 1}^{\dsize} \length{i} &= \sum_{i = 1}^r \lambda_i , \\
        \sum_{i = 1}^m \length{i} &\leq \sum_{i = 1}^m \lambda_i \quad \text{for every \( m = \indexes{r - 1} \).}
    \end{aligned}
\end{equation}

\end{itemize}
 \end{theorem}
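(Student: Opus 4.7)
The plan is to prove the two implications by viewing the rank-\(1\) decomposition through the companion Gram matrix and then to apply the Schur-Horn theorem (Theorem \ref{theorem: schur horn}) in one direction and Lemma \ref{lemma: orthonormal basis} in the other. The crucial bookkeeping object is the stacked matrix \(Y \Let \pmat{\rodvec{1}}{\rodvec{2}}{\rodvec{\dsize}} \in \R^{\dimension \times \dsize}\): on the one hand \(Y Y\transp = \sum_{i=1}^\dsize \rodvec{i}\rodvec{i}\transp\) equals \(M\) exactly when the right-hand side is a rank-\(1\) decomposition; on the other hand the \(K \times K\) matrix \(Y\transp Y\) has diagonal entries \(\inprod{\rodvec{i}}{\rodvec{i}}\) and shares its nonzero spectrum with \(YY\transp\). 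This translates the problem into a Schur-Horn problem about a symmetric matrix of size \(K\) with prescribed spectrum \((\eigval{1},\ldots,\eigval{r},0,\ldots,0)\) and prescribed diagonal \((\length{1},\ldots,\length{\dsize})\).

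For the forward implication (existence of the decomposition implies \eqref{eq:rod-majorization-condition}), I would take any admissible sequence \((\rodvec{i})_{i = 1}^{\dsize}\) and form \(Y\transp Y\). Its diagonal is \((\length{i})_{i = 1}^{\dsize}\), already sorted non-increasingly by hypothesis, and its eigenvalues are the non-increasing sequence \((\eigval{1},\ldots,\eigval{r},0,\ldots,0)\) of length \(\dsize\). Theorem \ref{theorem: schur horn} then yields \((\length{i})_{i = 1}^{\dsize} \prec (\eigval{1},\ldots,\eigval{r},0,\ldots,0)\), which is precisely \eqref{eq:rod-majorization-condition} once one notes that for \(m \geq r\) the partial-sum inequalities are automatic from non-negativity of \(\length{i}\) and the equality of total sums.

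For the reverse implication I would spectrally diagonalize \(M = U \Lambda U\transp\) with \(U \in \R^{\dimension \times r}\) having orthonormal columns and \(\Lambda = \diag(\eigval{1},\ldots,\eigval{r})\), and then consider the padded map \(\widetilde{\Lambda} \Let \diag(\eigval{1},\ldots,\eigval{r},0,\ldots,0) \in \R^{\dsize \times \dsize}\). The hypothesis \eqref{eq:rod-majorization-condition} asserts exactly that \((\length{i})_{i = 1}^{\dsize}\) is majorized by the eigenvalues of \(\widetilde{\Lambda}\), so Lemma \ref{lemma: orthonormal basis} supplies an orthonormal basis \((\orthobasis{i})_{i=1}^{\dsize}\) of \(\R^{\dsize}\) satisfying \(\inprod{\orthobasis{i}}{\widetilde{\Lambda}\,\orthobasis{i}} = \length{i}\). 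Setting \(W \Let \pmatr{U \ \ 0_{\dimension \times (\dsize - r)}} \in \R^{\dimension \times \dsize}\) and \(\rodvec{i} \Let W \widetilde{\Lambda}^{1/2} \orthobasis{i}\), a direct computation using orthonormality of the \(\orthobasis{i}\) gives \(\sum_{i=1}^{\dsize} \rodvec{i} \rodvec{i}\transp = W \widetilde{\Lambda} W\transp = U \Lambda U\transp = M\) and \(\inprod{\rodvec{i}}{\rodvec{i}} = \orthobasis{i}\transp \widetilde{\Lambda}^{1/2} W\transp W \widetilde{\Lambda}^{1/2} \orthobasis{i} = \orthobasis{i}\transp \widetilde{\Lambda}\, \orthobasis{i} = \length{i}\), finishing the construction.

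The main obstacle is essentially offloaded onto Lemma \ref{lemma: orthonormal basis}, which is itself an algorithmic enhancement of Schur-Horn; the delicate part in the present statement is only to line up the sorted eigenvalue list, padded with zeros up to length \(\dsize\), against the given non-increasing sequence \((\length{i})_{i=1}^{\dsize}\), and to ensure that the construction of \(\rodvec{i}\) from \(\orthobasis{i}\) lands back in \(\R^{\dimension}\) while preserving both the Gram and the length data. Because \(W\transp W = \diag(I_r, 0_{\dsize - r})\) agrees with \(\widetilde{\Lambda}\) on the support of \(\widetilde{\Lambda}^{1/2}\), the two identities fall out in tandem, which is why no additional hypothesis beyond \eqref{eq:rod-majorization-condition} is needed.
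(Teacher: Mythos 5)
Your proof is correct and, for the direction that the paper actually exhibits, it coincides with the paper's approach. The paper cites the theorem to \cite{casazza2002frames} without a formal proof, but immediately after the statement it gives the construction for the "majorization \(\Rightarrow\) decomposition" direction: it sets \(C = (\sqrt{\eigval{1}}u_1 \ \cdots \ \sqrt{\eigval{r}}u_r \ 0 \cdots 0)\), \(\Lambda = C\transp C\), invokes Lemma \ref{lemma: orthonormal basis} to get an orthonormal \((\orthobasis{i})\) with \(\inprod{\orthobasis{i}}{\Lambda\orthobasis{i}} = \length{i}\), and sets \(\rodvec{i} = C\orthobasis{i}\). Your \(W\widetilde{\Lambda}^{1/2}\) equals exactly this \(C\), so your reverse-direction construction is the same map in slightly different notation. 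For the forward direction, which the paper leaves to the citation, your Gram-matrix argument is clean and correct: \(Y\transp Y\) has diagonal \((\length{i})_{i=1}^{\dsize}\) and the same nonzero spectrum as \(YY\transp = M\), so Theorem \ref{theorem: schur horn} yields \((\length{i})_{i=1}^{\dsize} \majorize (\eigval{1},\ldots,\eigval{r},0,\ldots,0)\), and your observation that the partial-sum inequalities for \(m \ge r\) follow automatically from positivity of the \(\length{i}\) correctly reduces that majorization to \eqref{eq:rod-majorization-condition}. In short, you have supplied the full two-sided argument that the paper outsources, and the half the paper does sketch agrees with yours.
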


Given \( M \in \possemdef{\dimension} \) with nonzero eigenvalues \( (\eigval{i})_{i = 1}^{r} \) and corresponding eigenvectors \( ( \eigvec{i} )_{i = 1}^{r} \), the spectral theorem  shows that we can write 
\[
M = \sum_{i = 1}^{r} \eigval{i} \eigvec{i} \eigvec{i}^{\transp}.
\]
By defining \( C \in \R^{\dimension \times \dsize} \) as
\[
C \Let \big( \sqrt{\eigval{1}} ~ \eigvec{1} \quad \sqrt{\eigval{2}} ~ \eigvec{2} \; \cdots \; \sqrt{\eigval{r}} ~ \eigvec{r} \quad 0 \; \cdots \; 0  \big),
\]
we see at once that \( M = C C\transp \). Let a linear map \( \Lambda : \R^{\dsize} \lra \R^{\dsize} \) be defined as
\begin{equation*}
\Lambda \Let C^{\transp} C = \diag(\eigval{1}, \eigval{2}, \ldots, \eigval{r}, 0, \ldots, 0),
\end{equation*}
and let \( (\length{i})_{i = 1}^{\dsize} \) be a non-increasing sequence satisfying the hypothesis of Theorem \ref{theorem: casazza rod}; it can then be observed that \( (\length{i})_{i = 1}^{\dsize} \majorize (\eigval{1}, \eigval{2}, \ldots, \eigval{r}, 0, \ldots, 0 ) \). From Lemma \ref{lemma: orthonormal basis} we know that an orthonormal sequence of vectors \( (\orthobasis{i})_{i = 1}^{\dsize} \subset \R^{\dsize} \) exists such that 
\[
\inprod{\orthobasis{i}}{\Lambda \orthobasis{i}} = \length{i} \quad \text{for all \( i = \indexes{\dsize}, \)}
\]
and \( (\orthobasis{i})_{i = 1}^{\dsize} \) can be obtained via Algorithm \ref{algo: orthonormal basis} with \( \Lambda \) and \( (\length{i})_{i = 1}^\dsize \) as inputs. Let us define a sequence of vectors \( (\rodvec{i})_{i = 1}^{\dsize} \subset \R^{\dimension} \) by 
\[
\rodvec{i} \Let C x_i \quad \text{for all \( i = \indexes{\dsize} \)}.
\]
It follows readily that 
\begin{equation*}
\inprod{\rodvec{i}}{\rodvec{i}} = \length{i} \quad \text{ for all \( i = \indexes{\dsize} \), and } \sum_{i = 1}^{\dsize} y_i y_i\transp = M.
\end{equation*}

Thus, \( (\rodvec{i})_{i = 1}^{\dsize} \) is a rank-1 decomposition of \( M \). The above analysis is summarized in the form of Algorithm \ref{algo: rank-1 decomposition} that gives a procedure to obtain the above rank-1 decomposition of non negative matrices.

\begin{algorithm}[h]
\label{algo: rank-1 decomposition}
\caption{Computation of a rank-1 decomposition of non negative definite matrices \`a la Theorem \ref{theorem: casazza rod}}
\KwIn{A non negative definite matrix \( M \in \possemdef{\dimension} \) and a non-increasing sequence \( ( \length{i} )_{i = 1}^{\dsize} \) of positive real numbers.}
\KwOut{ A rank-1 decomposition \( (\rodvec{i})_{i = 1}^{\dsize} \) of \( M \) `a la Theorem \ref{theorem: casazza rod}.}

\nl Compute eigen-decomposition of \( M \) to get nonzero eigenvalues \footnotemark[6]  \( (\eigval{i})_{i = 1}^{r} \) and the corresponding eigenvectors \( (\eigvec{i})_{i = 1}^{r} \subset \R^{\dimension}\). 

\nl Define 
\begin{align*}
  \R^{\dsize \times \dsize} \ni \Lambda & \Let \diag(\eigval{1}, \eigval{2}, \ldots, \eigval{r}, 0, \ldots, 0 ), \\
  \R^{\dimension \times \dsize} \ni C & \Let \big( \sqrt{\eigval{1}} ~ \eigvec{1} \quad \sqrt{\eigval{2}} ~ \eigvec{2} \; \cdots \; \sqrt{\eigval{r}} ~ \eigvec{r} \quad 0 \; \cdots \; 0  \big).
\end{align*}

\nl Get the sequence \( (\orthobasis{i})_{i = 1}^{\dsize} \subset \R^{\dsize} \) via Algorithm \ref{algo: orthonormal basis} with \( \Lambda \) and \( ( \length{i} )_{i = 1}^{\dsize} \) as inputs.

\nl Define a sequence \( (\rodvec{i})_{i = 1}^{\dsize} \subset \R^{\dimension} \) as 
\begin{align*}
 \R^{\dimension} \ni \rodvec{i} \Let C \orthobasis{i} \quad \text{fro all \( i = \indexes{\dsize} \).}
\end{align*}

\nl Output \( (\rodvec{i})_{i = 1}^{\dsize} \).

\end{algorithm}
\footnotetext[6]{The eigenvalues are sorted in non-increasing order.}

\begin{remark}
It should be noted that Algorithm \ref{algo:general-l2-opt-algo} uses Algorithm \ref{algo: rank-1 decomposition}, however the eigen-decomposition in step 1 of Algorithm \ref{algo: rank-1 decomposition} can be avoided when called from Algorithm \ref{algo:general-l2-opt-algo}.
\end{remark}

\subsection{Rearrangement inequality}
\label{subsection: rearrangement inequality} We need the following classical rearrangement inequality. Let \( \dimension \) be a positive integer and let \( (a_i)_{i = 1}^{\dimension} \), \( (b_i)_{i = 1}^{\dimension} \) be two arbitrary sequences of non-negative real numbers. Let \( (\overline{a}_i)_{i = 1}^{\dimension} \), \( (\overline{b}_i)_{i = 1}^{\dimension} \) be the rearrangements of the sequences \( (a_i)_{i = 1}^{\dimension} \), \( (b_i)_{i = 1}^{\dimension} \) in non-increasing order and \( (\hat a_i)_{i = 1}^{\dimension} \), \( (\hat b_i)_{i = 1}^{\dimension} \) in non-decreasing order. Then, from \cite[Section 10.2, Theorem 368, p.\ 261]{hardy1952inequalities} we have the following \emph{rearrangement inequality}:
\begin{equation}
\label{rearrangement inequality}
\sum_{i = 1}^{\dimension} \overline{a}_i \hat b_{i} = \sum_{i = 1}^{\dimension} \hat a_i \overline{b}_i \quad
\leq \quad
\sum_{i = 1}^{\dimension} a_i b_i \quad
\leq \quad
\sum_{i = 1}^{\dimension} \overline{a}_i \overline{b}_i = \sum_{i = 1}^{\dimension} \hat a_i \hat b_i.
\end{equation}

\subsection{Auxiliary Optimization problems:}
\label{optimization problem}
Let \( \poseig \) be a positive integer, and let \( (a_i)_{i = 1}^{\poseig}, (s_i)_{i = 1}^{\poseig}  \) be two arbitrary sequences of positive real numbers. Let us consider the following optimization problem:
\begin{equation}
\label{eq: primalgeneral}
\begin{aligned}
		& \minimize_{( \primalvar{i} )_i \subset \R }	&&  \sum_{i = 1}^{\poseig} s_i \Big( \frac{1}{\eigval{i}} \Big) \\
		& \sbjto							&&
		\begin{cases}
            \sum_{i = 1}^j a_i \leq \sum_{i = 1}^j \primalvar{i} \quad \text{ for all \( j = \indexes{\poseig - 1}\)}, \\
			\sum_{i = 1}^{\poseig} a_i = \sum_{i = 1}^{\poseig} \primalvar{i}.
		\end{cases}
	\end{aligned}
\end{equation}
Since \( s_i > 0 \), we know that the map \( \R \ni \primalvar{i} \lmt s_i \left( \frac{1}{\primalvar{i}} \right) \) is convex over the set of positive real numbers  for all \( i = \indexes{\poseig}\). To wit, the objective function in \eqref{eq: primalgeneral} is a finite sum of convex functions, and therefore is convex. The equality constraint is affine and the inequality constraints are convex; therefore, \eqref{eq: primalgeneral} is a convex optimization problem. It should be noted that even though the problem \eqref{eq: primalgeneral} is convex, the objective is sum of inverses, for which, evaluating the gradient is computationally expensive, because of which solving \eqref{eq: primalgeneral} becomes hard due to optimization algorithms generally employing gradient descent schemes. In contrast to the problem \eqref{eq: primalgeneral} let us consider the following optimization problem:
\begin{equation}
\label{eq: dualgeneral}
\begin{aligned}
		& \minimize_{( \dualvar{t} )_{t} \subset \R}	&&  \sum_{t = 1}^{\poseig}  a_t \dualvar{t}^2 - 2  \sqrt{s_t} \dualvar{t}  \\
		& \sbjto						&&
		0 \leq \dualvar{1} \leq \cdots \leq \dualvar{\poseig}.
\end{aligned}
\end{equation}
Since \( a_t > 0 \) for each \( t \), we conclude that \eqref{eq: dualgeneral} is a convex quadratic problem and is easier to solve than \eqref{eq: primalgeneral}.
Moreover we see that, for small enough \( \epsilon > 0 \), the sequence \( (\primalvar{i})_{i = 1}^{\poseig} \) defined by \( \primalvar{1} \Let \big( \sum_{i = 1}^{\poseig} a_i \big) - (\poseig - 1) \epsilon \), and \( \primalvar{i} \Let \epsilon \) for all \( i = 2,\ldots,\poseig, \) satisfies the equality constraint and strictly satisfies the inequality constraints of \eqref{eq: primalgeneral}. Therefore, \( (\eigval{i})_{i = 1}^{\poseig} \) provides the Slater constraint qualification certificate for the problem \eqref{eq: primalgeneral}. Due to convexity and validity of Slater's condition, we conclude that strong duality holds for \eqref{eq: primalgeneral}, which implies that the optimal solution \( (\primalvar{i}\opt )_{i = 1}^{\poseig} \) to \eqref{eq: primalgeneral} and the dual optimal variables must satisfy the KKT conditions \cite[Section 5.5.2, 5.5.3]{boydconvex} (add reference), in this case they are both necessary and sufficient. Therefore, once the dual optimal variables are known, we can compute \( (\primalvar{i}\opt )_{i = 1}^{\poseig} \) by solving the KKT conditions.

We shall show that the problem \eqref{eq: dualgeneral} is indeed equivalent to the Lagrangian dual of \eqref{eq: primalgeneral}, and that the dual optimal variables can be computed easily from the optimal solution to \eqref{eq: dualgeneral}. The following Lemma characterizes the optimal solution to \eqref{eq: primalgeneral} from the optimal solution \( (\dualvar{t}\opt)_{t = 1}^{\poseig} \) of \eqref{eq: dualgeneral}.
\begin{lemma}
\label{lemma: primal dual}
	Consider the optimization problems \eqref{eq: primalgeneral} and \eqref{eq: dualgeneral}. 
	\begin{itemize}[label=\(\circ\), leftmargin=*]
		\item Both \eqref{eq: primalgeneral} and \eqref{eq: dualgeneral} admit unique optimal solutions \( (\primalvar{i}\opt)_{i = 1}^{\poseig} \), \( (\dualvar{t}\opt)_{t = 1} ^{\poseig} \).
		\item Let an ordered subset \( (\partition{1}, \partition{2}, \ldots, \partition{T}) \subset (\indexes{\poseig}) \) be defined iteratively as
		\[
        \begin{aligned}
        \partition{1} & \Let 1, \\
        \partition{l} & \Let \min \{ t \; \vert \; \partition{(l - 1)} < t \leq \poseig, \; \dualvar{(t - 1)} \opt < \dualvar{t} \opt \} \quad  \text{for all \( l = 2,\ldots, T \).} 
        \end{aligned}
		\]
        \begin{itemize}[label=\(\triangleright\), leftmargin=*]
        \item The unique optimal solution \( (\primalvar{i} \opt)_{i = 1}^{\poseig} \) to                
              \eqref{eq: primalgeneral} is given by
\begin{equation}
( \primalvar{i} \opt )_{i = 1}^{\poseig} = \eigvalmap{(\sqrt{s_i})_{i = 1}^{\poseig}}{(a_i)_{i = 1}^{\poseig}}{(\partition{l})_{l = 1}^{T}}.
\end{equation}  
        \item The optimal values \( p \opt \), \( q \opt \) corresponding to \eqref{eq: primalgeneral} and \eqref{eq: dualgeneral} respectively are given by 
        \begin{equation}
        \label{eq: optimal value primal dual}
          - q \opt = p \opt = \optval{(\sqrt{s_i})_{i = 1}^{\poseig}}{(a_i)_{i = 1}^{\poseig}}{(\partition{l})_{l = 1}^{T}}.
        \end{equation}
        \end{itemize}
\end{itemize}
\end{lemma}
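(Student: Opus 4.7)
The plan is to identify \eqref{eq: dualgeneral} (up to a sign) as the Lagrangian dual of \eqref{eq: primalgeneral}, invoke strong duality via the Slater certificate already exhibited in the text, and read off the primal optimizer from the dual one using the KKT conditions.

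First I would form the Lagrangian of \eqref{eq: primalgeneral} with multipliers \( \ineqmul{j}\geq 0 \) for the inequalities and \( \eqmul \) free for the equality, and group the terms by \( \primalvar{i} \). Introducing the aggregated multipliers \( \tilde\eta_i \Let \eqmul + \sum_{j=i}^{\poseig-1}\ineqmul{j} \), which are automatically non-increasing in \( i \), and performing Abel summation on \( \sum_j \ineqmul{j}\sum_{k\le j} a_k + \eqmul \sum_k a_k \), the Lagrangian collapses to \( \sum_i\bigl(s_i/\primalvar{i} - \tilde\eta_i\,\primalvar{i}\bigr) + \sum_i \tilde\eta_i a_i \). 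Its infimum over \( \primalvar{i} > 0 \) is finite only when every \( \tilde\eta_i \le 0 \); the change of variable \( \dualvar{i}\Let\sqrt{-\tilde\eta_i} \) then turns the sign constraints \( \ineqmul{j}\ge 0 \) into the ordering \( 0\le \dualvar{1}\le \cdots \le \dualvar{\poseig} \) and turns the negative of the dual function into the objective of \eqref{eq: dualgeneral}. Maximizing the Lagrangian dual is therefore the same as solving \eqref{eq: dualgeneral}, and the Slater point constructed in the text furnishes strong duality, giving \( p\opt = -q\opt \). Uniqueness is routine: \eqref{eq: dualgeneral} is a strictly convex QP over a polyhedron, and the primal objective \( \sum_i s_i/\primalvar{i} \) is strictly convex on the implicit positive orthant to which the feasible region is confined.

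Next I would extract \( (\primalvar{i}\opt) \) from \( (\dualvar{t}\opt) \) via KKT. Stationarity of the Lagrangian yields \( \primalvar{i}\opt = \sqrt{s_i}/\dualvar{i}\opt \); complementary slackness identifies the active inequalities as exactly those indices \( j \) with \( \dualvar{j+1}\opt > \dualvar{j}\opt \), i.e.\ \( j = \partition{l} - 1 \) for \( l = 2,\ldots,T \), together with the equality at \( j = \poseig \). Hence \( \dualvar{i}\opt \) is constant on every block \( \partition{l} \le i < \partition{l+1} \); differencing the active constraints block by block produces \( \sum_{i=\partition{l}}^{\partition{l+1}-1}\primalvar{i}\opt = \sum_{i=\partition{l}}^{\partition{l+1}-1} a_i \), and solving for the common block-value of \( \dualvar{i}\opt \) gives \( \dualvar{i}\opt = \bigl(\sum_{j=\partition{l}}^{\partition{l+1}-1}\sqrt{s_j}\bigr)\big/\bigl(\sum_{j=\partition{l}}^{\partition{l+1}-1} a_j\bigr) \). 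Substituting back into \( \primalvar{i}\opt = \sqrt{s_i}/\dualvar{i}\opt \) yields exactly \( (\primalvar{i}\opt) = \eigvalmap{(\sqrt{s_i})_{i=1}^{\poseig}}{(a_i)_{i=1}^{\poseig}}{(\partition{l})_{l=1}^{T}} \), and a block-by-block summation of \( s_i/\primalvar{i}\opt \) delivers \eqref{eq: optimal value primal dual}.

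The main obstacle is bookkeeping rather than conceptual: getting the sign flip in the change of variables \( \dualvar{i}^2 = -\tilde\eta_i \) correct (so that \( \ineqmul{j}\ge 0 \) matches the \emph{non-decreasing} ordering of \( \dualvar{i} \)), executing the Abel summation without index errors, and verifying \( \dualvar{i}\opt > 0 \) so that division by \( \dualvar{i}\opt \) is legal --- the last follows because the partial of the \eqref{eq: dualgeneral} objective in \( \dualvar{1} \) at \( \dualvar{1}=0 \) equals \( -2\sqrt{s_1} < 0 \), forcing \( \dualvar{1}\opt > 0 \), after which the ordering constraint propagates strict positivity to every \( \dualvar{i}\opt \).
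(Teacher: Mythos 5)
Your proof follows essentially the same route as the paper: form the Lagrangian of \eqref{eq: primalgeneral}, identify the Lagrange dual as \eqref{eq: dualgeneral} via a square-root change of variables on the aggregated multipliers, invoke Slater's condition for strong duality, and recover \( (\primalvar{i}\opt) \) from \( (\dualvar{t}\opt) \) through the KKT stationarity and complementary-slackness conditions block by block. The paper carries the multipliers \( \eqmul, (\ineqmul{j}) \) explicitly and defines \( \dualvar{t} = (\eqmul - \sum_{j\ge t}\ineqmul{j})^{1/2} \), whereas you introduce the aggregated \( \tilde\eta_i \) with the opposite sign convention and set \( \dualvar{i} = \sqrt{-\tilde\eta_i} \); this is a cosmetic difference and the resulting computations coincide. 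One small overstatement: complementary slackness gives only that \( \ineqmul{j}\opt > 0 \) (equivalently \( \dualvar{j+1}\opt > \dualvar{j}\opt \)) forces the \( j \)-th primal inequality to be active, not the converse, so the active set need not be ``exactly'' those indices; fortunately your derivation only uses the one implication, as does the paper's.
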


\begin{lemma}
\label{lemma:primal-dual-2}
If the sequence \( (s_i)_{i = 1}^{\poseig} \) in \eqref{eq: primalgeneral} is non-increasing, then,
        \begin{itemize}[label=\( \circ \), leftmargin=*]
			\item the unique optimal sequence \( (\eigval{i}\opt)_{i = 1}^{\poseig} \) is also non-increasing, and
			\item if we have \( \eigval{i}\opt = \eigval{j}\opt = \eigval{} \) for some \( i \neq j \in \{ \indexes{\poseig} \} \), then \( s_i = s_j \).
        \end{itemize} 

\end{lemma}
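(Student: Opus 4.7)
The plan is to leverage the explicit characterization of the optimal solution from Lemma \ref{lemma: primal dual}, which identifies $(\primalvar{i}\opt)_{i=1}^{\poseig}$ with the output of the $\lambda$-map applied to $(\sqrt{s_i})$, $(a_i)$, and the partition $(\partition{l})_{l=1}^T$ induced by the strict increases of the dual optimum $(\dualvar{t}\opt)$. My first step will be to unpack this characterization into the clean formula
\[
\primalvar{i}\opt \;=\; \frac{\sqrt{s_i}}{\dualvar{i}\opt}, \qquad i = \indexes{\poseig}.
\]
To establish it, I would write the KKT conditions for the convex quadratic program \eqref{eq: dualgeneral}, introducing multipliers for the monotonicity constraints $\dualvar{t} - \dualvar{t+1} \leq 0$; complementary slackness forces the multipliers at every block boundary $t = n_{l+1} - 1$ (and at $t=\poseig$, plus at $t=0$ if the $\dualvar{1}\opt \geq 0$ constraint is inactive, which it is since $s_1 > 0$) to vanish. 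Summing the stationarity condition $2a_t \dualvar{t}\opt - 2\sqrt{s_t} + \mu_t - \mu_{t-1} = 0$ across a block $[n_l, n_{l+1}-1]$ where $\dualvar{t}\opt$ is constant collapses the telescoping multipliers and yields $\dualvar{t}\opt = \frac{\sum_{j \in \text{block}} \sqrt{s_j}}{\sum_{j \in \text{block}} a_j}$. Comparing with definition \eqref{eq:definition-of-lambda} then gives the displayed identity.

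Once this identity is in hand, the first claim reduces to a one-line monotonicity argument: $(\sqrt{s_i})$ is non-increasing by hypothesis and $(\dualvar{i}\opt)$ is non-decreasing by the feasibility constraint of \eqref{eq: dualgeneral}, so the pointwise ratio is a non-increasing sequence of positive reals. This establishes that $(\primalvar{i}\opt)$ is non-increasing.

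For the second claim, suppose $\primalvar{i}\opt = \primalvar{j}\opt \eqqcolon \primalvar{}\opt$ for some $i < j$. By the non-increasing property just proved, $\primalvar{k}\opt = \primalvar{}\opt$ for every $k \in \{i, i+1, \ldots, j\}$. Rewriting the identity yields
\[
\sqrt{s_k} \;=\; \primalvar{}\opt \cdot \dualvar{k}\opt \qquad \text{for } k \in \{i, \ldots, j\}.
\]
The left-hand side is non-increasing in $k$ and the right-hand side is non-decreasing in $k$ (since $\primalvar{}\opt > 0$), which forces both sides to be constant on $\{i, \ldots, j\}$. In particular $s_i = s_j$, as required.

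The only step with any real content is the derivation of $\primalvar{i}\opt = \sqrt{s_i}/\dualvar{i}\opt$ via KKT; the rest is bookkeeping. I expect the only subtlety will be verifying that the nonnegativity constraint $\dualvar{1}\opt \geq 0$ does not introduce a nontrivial multiplier that would break the telescoping, but this is immediate because the unconstrained minimizers $\sqrt{s_t}/a_t$ of the quadratic are strictly positive, so optimality is attained at $\dualvar{1}\opt > 0$.
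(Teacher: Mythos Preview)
Your proposal is correct but follows a different route from the paper. The paper proves the first bullet by a rearrangement-plus-uniqueness argument: it takes the (a priori arbitrary) optimizer, rearranges it in non-increasing order, checks that the rearranged sequence is still feasible with no larger objective value, and then invokes uniqueness to conclude the optimizer was already sorted. For the second bullet the paper uses an explicit perturbation: assuming $s_i > s_j$ with $\lambda_i^\ast = \lambda_j^\ast = \lambda$, it sets $\epsilon = \lambda(s_i - s_j)/\bigl(2(s_i + s_j)\bigr)$, shifts $\lambda_i^\ast \mapsto \lambda_i^\ast + \epsilon$ and $\lambda_j^\ast \mapsto \lambda_j^\ast - \epsilon$, and computes that the objective strictly decreases, contradicting optimality.

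Your approach instead exploits the identity $\lambda_i^\ast = \sqrt{s_i}/x_i^\ast$, which in fact already appears in the paper's proof of Lemma~\ref{lemma: primal dual} (so you need not re-derive it via the dual KKT system; a citation suffices). From that identity both bullets drop out by the one-line monotonicity observations you describe. Your argument is shorter and more unified, at the cost of depending on the internals of Lemma~\ref{lemma: primal dual}; the paper's arguments are more self-contained and work directly from the structure of problem~\eqref{eq: primalgeneral} without unpacking the dual solution.
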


For \( ( \eigval{i}\opt )_{i =1}^{\poseig} \) to be the unique optimal solution of the problem \eqref{eq: primalgeneral}, let us consider the following optimization problem:
\begin{equation}
\begin{aligned}
\label{eq:opt-perm-map}
\minimize_{\pi \; \in \; \Pi_{\poseig}} \quad \sum_{i = 1}^{\poseig} \frac{s_{\pi(i)}}{\eigval{i}\opt},
\end{aligned}
\end{equation}
where, \( \Pi_{\poseig} \) is the symmetric group on \( (\indexes{\poseig}) \). The optimal permutation map \( \pi\opt \) is characterized by the following lemma.
\begin{lemma}
\label{lemma:primal-dual-3}
Consider the optimization problem \eqref{eq:opt-perm-map}.
\begin{itemize}[label = \( \circ \), leftmargin=*]
\item \eqref{eq:opt-perm-map} admits an optimal solution.

\item If \( \pi\opt \) is an optimal solution to \eqref{eq:opt-perm-map}, then for every \( i \in \{ \indexes{\poseig} \} \), there exist \( j \in \{ \indexes{\poseig} \} \) such that \( \eigval{i}\opt = \eigval{j}\opt \) and \( s_{\pi\opt(i)} = s_j \).

\item A permutation map \( \pi\opt \) is an optimal solution of \eqref{eq:opt-perm-map} \textbf{if and only if} 
\begin{equation}
\label{eq:opt-perm-map-solution}
s_{\pi\opt(1)} \geq s_{\pi\opt(2)} \geq \cdots \geq s_{\pi\opt(\poseig)};
\end{equation}
consequently, \( s_{\pi\opt(i)} = s_i \) for all \( i = \indexes{\poseig} \).
\end{itemize}
\end{lemma}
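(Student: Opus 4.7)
The existence claim (first of the three stated items) is immediate since $\Pi_{\poseig}$ is finite and the objective is well defined ($\lambda_i^*>0$ for all $i$). For the two remaining items, my main tool is a one-swap variational inequality. Fix an optimal $\pi^*$ and, for any pair $i<k$, let $\pi'$ denote $\pi^*$ post-composed with the transposition of $i$ and $k$. A direct computation gives
\[
\sum_{\ell}\frac{s_{\pi'(\ell)}}{\lambda_\ell^*}-\sum_\ell\frac{s_{\pi^*(\ell)}}{\lambda_\ell^*} = (s_{\pi^*(k)}-s_{\pi^*(i)})\left(\frac{1}{\lambda_i^*}-\frac{1}{\lambda_k^*}\right),
\]
and optimality of $\pi^*$ forces this quantity to be $\geq 0$. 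By Lemma \ref{lemma:primal-dual-2}, $(\lambda_i^*)$ is non-increasing, so whenever $i<k$ and $\lambda_i^*>\lambda_k^*$ strictly we have $1/\lambda_i^*<1/\lambda_k^*$, which forces $s_{\pi^*(i)} \geq s_{\pi^*(k)}$.

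The next step is to upgrade this ``across-blocks'' inequality into the termwise equality $s_{\pi^*(i)}=s_i$. Group $\{1,\ldots,\poseig\}$ into the $\lambda^*$-blocks $B_1,\ldots,B_L$ — the maximal consecutive index sets on which $\lambda^*$ is constant. By the second assertion of Lemma \ref{lemma:primal-dual-2}, $(s_i)$ is also constant on each $B_l$, say with value $s^{(l)}$; hence the multiset of $(s_{\pi^*(i)})_{i=1}^{\poseig}$ coincides with that of $(s_i)_{i=1}^{\poseig}$, namely $|B_l|$ copies of $s^{(l)}$ for each $l$. The swap inequality shows that every $s_{\pi^*(i)}$ with $i\in B_l$ is $\geq$ every $s_{\pi^*(k)}$ with $k\in B_{l'},\ l'>l$. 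A counting argument — the top $|B_1|+\cdots+|B_l|$ values of the total multiset must land in $B_1\cup\cdots\cup B_l$ — then forces $\{s_{\pi^*(i)}:i\in B_l\}$ to be exactly $|B_l|$ copies of $s^{(l)}$, so $s_{\pi^*(i)}=s_i$ for every $i$. This proves the second assertion (take $j=i$) together with the ``consequently'' clause of the third.

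Finally, for the ``if and only if'' in the third assertion: the forward direction is immediate from $s_{\pi^*(i)}=s_i$ and the hypothesis that $(s_i)$ is non-increasing. Conversely, if $(s_{\pi^*(i)})$ is non-increasing, then being a permutation of the non-increasing $(s_i)$ it must coincide with $(s_i)$ termwise, and optimality then follows from the rearrangement inequality \eqref{rearrangement inequality} applied to the non-decreasing sequence $(1/\lambda_i^*)$: the sum $\sum_i(1/\lambda_i^*)\,s_{\pi(i)}$ is minimized precisely when $(s_{\pi(i)})$ is arranged in non-increasing order. The main technical obstacle, as the argument indicates, is that the swap inequality yields no information for pairs $i<k$ with $\lambda_i^*=\lambda_k^*$; Lemma \ref{lemma:primal-dual-2} is essential here because it identifies $\lambda^*$-level sets with $s$-level sets, after which the pigeonhole-style count closes the within-block gap.
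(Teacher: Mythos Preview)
Your proof is correct. Both your argument and the paper's rest on the same two ingredients---the one-swap variational inequality and Lemma~\ref{lemma:primal-dual-2}---but the organization differs. The paper proves the second assertion first by a case-by-case contradiction (if $s_{\pi^*(i)}$ matches no $s_j$ on the $\lambda^*$-level set of $i$, then by monotonicity it lies strictly above or strictly below all of them, and either case yields an improving transposition with some index $k$ outside the level set); it then uses the second assertion together with Lemma~\ref{lemma:primal-dual-2} to rule out inversions and obtain the third. You instead make the $\lambda^*$-block structure explicit at the outset, invoke Lemma~\ref{lemma:primal-dual-2} to see that $(s_i)$ is already constant on each block, and combine the across-block swap inequality with a pigeonhole count on the fixed multiset to obtain the termwise identity $s_{\pi^*(i)}=s_i$ directly; both items then fall out at once. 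Your route is a little more streamlined in that it establishes the strongest conclusion first; the paper's route has the minor advantage (noted in the remark following the lemma) that its proof of the second assertion does not rely on Lemma~\ref{lemma:primal-dual-2} and therefore holds for any non-increasing sequence $(\lambda_i)$, not only for the optimizer $(\lambda_i^*)$.
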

\begin{remark}
Even though it is straight forward that the condition \eqref{eq:opt-perm-map-solution} is sufficient for the optimality of the permutation map \( \pi\opt \), the fact that it is necessary as well is the crucial part of the assertion of Lemma \ref{lemma:primal-dual-3}. Moreover, it should be noted that the first two assertions of the lemma hold for generic non-increasing sequences \( (\eigval{i})_{i = 1}^{\poseig} \) as well.
\end{remark}

\subsection{Eigenvalues of sum of Hermitian matrices}
\label{subsection: eigenvalues of sum of hermitian matrices}
The following result from \cite[Theorem 2]{wielandt1955extremum} provides a necessary condition on the eigenvalues of sum of hermitian matrices, which we shall use in the proof of Theorem \ref{theorem:DL-theorem-2}.
\begin{theorem}
\label{theorem: eigenvalues of sum of hermitian matrices}
Let \( n \), \( r \) be a positive integers with \( r \leq n \). Let \( A,B,C \in \symmetric{\dimension} \) such that \( C = A + B\), and let the sequences \((\alpha_i)_{i = 1}^{\dimension}\), \((\beta_i)_{i = 1}^{\dimension}\) and \((\gamma)_{i = 1}^{\dimension}\) be the non-increasing sequences of eigenvalues of \(A,B\) and \( C \), respectively. If \( I_r \) is any subset of \( \{ \indexes{\dimension} \} \) with cardinality equal to \( r \), then we have 
\[
\sum_{i \in I_r} \gamma_i \; \leq \; \sum_{i \in I_r} \alpha_i  +  \sum_{i = 1}^r \beta_i.
\]
\end{theorem}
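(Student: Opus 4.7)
The plan is to combine Wielandt's minimax characterisation of partial sums of eigenvalues over an arbitrary index set with Ky Fan's maximum principle. Write \( I_r = \{ i_1 < i_2 < \cdots < i_r \} \subset \{ \indexes{\dimension} \} \). For any \( M \in \symmetric{\dimension} \) with non-increasing eigenvalues \( \mu_1 \geq \mu_2 \geq \cdots \geq \mu_{\dimension} \), Wielandt's theorem supplies the identity
\[
\sum_{k=1}^r \mu_{i_k} \;=\; \min_{\substack{W_1 \supset W_2 \supset \cdots \supset W_r \\ \dim W_k \,=\, \dimension - i_k + 1}} \; \max_{\substack{x_1, \ldots, x_r \text{ orthonormal} \\ x_k \in W_k}} \; \sum_{k=1}^r \langle x_k, M x_k \rangle.
\]
Its unconstrained specialisation (taking \( i_k = k \) and \( W_k = \R^{\dimension} \)) collapses to Ky Fan's maximum principle \( \sum_{k=1}^r \mu_k = \max_{\{x_k\}\text{ o.n.}} \sum_k \langle x_k, M x_k \rangle \), which I shall also need.

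Given this identity, the proof reduces to a short three-step deduction. First, I would apply it to the matrix \( A \) and select a nested chain \( W_1 \supset W_2 \supset \cdots \supset W_r \) with \( \dim W_k = \dimension - i_k + 1 \) attaining the outer minimum, so that \( \sum_k \alpha_{i_k} = \max_{x_k \in W_k, \text{ o.n.}} \sum_k \langle x_k, A x_k \rangle \). Second, apply the same identity to \( C = A + B \) but evaluated at this \emph{particular} chain; since the identity expresses its left-hand side as an infimum over chains, restricting to one chain only yields an upper bound,
\[
\sum_{k=1}^r \gamma_{i_k} \;\leq\; \max_{\substack{x_k \in W_k \\ \{x_k\}\text{ o.n.}}} \; \sum_{k=1}^r \langle x_k, C x_k \rangle.
\]
Third, split \( \langle x_k, C x_k \rangle = \langle x_k, A x_k \rangle + \langle x_k, B x_k \rangle \) and invoke the elementary inequality \( \max(f+g) \leq \max f + \max g \); the \( A \)-part reproduces \( \sum_k \alpha_{i_k} \) by the defining property of \( W_\bullet \), while the \( B \)-part, bounded above by its unrestricted maximum over orthonormal \( r \)-tuples, is at most \( \sum_{k=1}^r \beta_k \) by Ky Fan. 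Concatenating the inequalities gives the desired bound.

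The substantive obstacle is the Wielandt identity itself. One direction is attained by the explicit chain \( W_k = \Span\{u_{i_k}, u_{i_k+1}, \ldots, u_{\dimension}\} \) built from an eigenbasis \( (u_j) \) of \( M \): for any unit vector \( x_k \in W_k \) one has \( \langle x_k, M x_k\rangle \leq \mu_{i_k} \) because \( W_k \) lies in the span of eigenvectors with eigenvalues at most \( \mu_{i_k} \); summing yields \( \sum_k \langle x_k, M x_k\rangle \leq \sum_k \mu_{i_k} \), with equality at the orthonormal choice \( x_k = u_{i_k} \). Hence the min-max value is at most \( \sum_k \mu_{i_k} \). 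The reverse inequality --- that every admissible decreasing chain admits orthonormal picks \( x_k \in W_k \) realising \( \sum_k \langle x_k, M x_k\rangle \geq \sum_k \mu_{i_k} \) --- is the genuinely non-trivial step and would have to be derived by induction on \( r \) using iterated Courant--Fischer together with a dimension-counting argument inside the flag. Taking the Wielandt identity as a classical tool, the three-step deduction above completes the proof.
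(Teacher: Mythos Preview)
Your argument is correct and is essentially Wielandt's own proof strategy. Note, however, that the paper does not supply a proof of this statement at all: it is quoted verbatim as \cite[Theorem 2]{wielandt1955extremum} and used as a black box in the proof of Theorem~\ref{theorem:DL-theorem-2}. So there is no ``paper's own proof'' to compare against; you have simply reconstructed the classical argument behind the cited result, which is entirely appropriate.

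One small remark on presentation: the step $\max(f+g)\leq\max f+\max g$ is being applied over the constrained set $\{x_k\in W_k,\ (x_k)\text{ orthonormal}\}$, so the $B$-term is first bounded by the constrained maximum and only then relaxed to the unconstrained Ky~Fan maximum. You do say this, but it is worth making the two inequalities explicit rather than folding them into a single sentence. Also, the ``reverse inequality'' in the Wielandt identity that you flag as non-trivial is precisely the content of Wielandt's 1955 paper, so citing it (as the present paper does) rather than re-deriving it is the standard move.
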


\section{Proofs of main results}
\label{section:proofs-of-main-results}


\subsection{Proof of Theorem \ref{theorem:DL-theorem-1}}
For a given dictionary \( \dictionary \in \mathcal{\dictionary}(0) \) of vectors \( ( \dict{i} )_{i = 1}^{\dsize} \) that is feasible for \eqref{eq:DL-problem}, let us define a scheme of representation
\[
	\R^{\dimension} \ni v \mapsto \optscheme_{\dictionary} (v) \Let \pmat{ \dict{1} }{ \dict{2} }{ \dict{\dsize} }^+ v \; \in \R^{\dsize}.
\]
Quite clearly, \( \pmat{ \dict{1} }{ \dict{2} }{ \dict{\dsize} }\optscheme_{\dictionary}(v) = v \) for any \( v \in \R^{\dimension} \) by the definition of the pseudo-inverse because if \( \Span (\dict{i})_{i = 1}^{\dsize} = \R^{\dimension} \), then 
 \( \pmat{ \dict{1} }{ \dict{2} }{ \dict{\dsize} }^+ v \) solves the equation \( \pmat{ \dict{1} }{ \dict{2} }{ \dict{\dsize} } x = v \). Therefore,
\[
	\pmat{ \dict{1} }{ \dict{2} }{ \dict{\dsize} } \optscheme_{\dictionary}(\randomvec) = \randomvec  \text{\ \  \( \mu \)-almost surely}.
\]
We also know that \( \optscheme_{\dictionary}(v) = \pmat{ \dict{1} }{ \dict{2} }{ \dict{\dsize} }^+ v \) is the solution of the least squares problem:
\[
\begin{aligned}
	& \minimize_{x\in\R^{\dsize}}	&& \norm{x}^2 \\
	& \sbjto				&& \pmat{ \dict{1} }{ \dict{2} }{ \dict{\dsize} } x = v.
\end{aligned}
\]                
Therefore, for an arbitrary \( \scheme \in \mathcal{F}(0, \dictionary) \), which also implies that \[
\pmat{ \dict{1} }{ \dict{2} }{ \dict{\dsize} } f(v) = v \quad \text{for all \( v \in \R^{\dimension} \)},
\]
we must have
\begin{align*}
\norm{\optscheme_{\dictionary}(v)}^2  &  \leq \norm{\scheme(v)}^2 \quad\text{for all } v \in \R^{\dimension}.
\end{align*}
Therefore, \( \norm{\optscheme_{\dictionary} (\randomvec)}^2 \leq \norm{\scheme (\randomvec)}^2 \) \( \mathsf{P} \)-almost surely, and hence,
\[
	\EE_{\mathsf{P}} \bigl[ \norm{\optscheme_{\dictionary} (\randomvec)}^2 \bigr]  \leq  \EE_{\mathsf{P}} \bigl[ \norm{\scheme (\randomvec)}^2 \bigr].
\]
Minimizing over all feasible dictionaries and the corresponding schemes, we get
\begin{equation}
	\label{eq:first-comparison}
	\inf_{\dictionary \in \mathcal{\dictionary} (0)}  \EE_{\mathsf{P}} \bigl[ \norm{\optscheme_{\dictionary}(\randomvec)}^2 \bigr] \leq \inf_{\substack{\dictionary \in\mathcal{\dictionary}(0),\\ \scheme \in \mathcal{F} (0, \dictionary)}} \EE_{\mathsf{P}} \bigl[ \norm{\scheme (\randomvec)}^2 \bigr] \\
\end{equation}
The problem on the left-hand side of \eqref{eq:first-comparison} is
\begin{equation}
	\label{eq:first-DL}
	\begin{aligned}
		& \minimize_{( \dict{i} )_{i=1}^{\dsize}} && \EE_{\mathsf{P}} \bigl[ \norm{\optscheme_{\dictionary} (\randomvec)}^2 \bigr] \\
		& \sbjto		&&
		\begin{cases}
			\inprod{ \dict{i} }{\dict{i}} = \length{i} \quad \text{for all \( i = \indexes{\dsize} \),}\\
			\Span ( \dict{i} )_{i = 1}^{\dsize} = \R^{\dimension}. 
		\end{cases}
	\end{aligned}
\end{equation}
From \eqref{eq:first-comparison} we conclude that the optimal value, if it exists, of problem \eqref{eq:DL-problem} is bounded below by the optimal value, if it exists, of \eqref{eq:first-DL}. Our strategy is to demonstrate that \eqref{eq:first-DL} admits a solution, and we shall furnish a feasible solution of \eqref{eq:DL-problem} that achieves a value of the objective function that is equal to the optimal value of the problem \eqref{eq:first-DL}. This will solve \eqref{eq:DL-problem}.

Let \( D \Let \pmat{\dict{1}}{\dict{2}}{\dict{\dsize}} \). The objective function in \eqref{eq:first-DL} is 
\begin{align*}
\EE_{\mathsf{P}} \bigl[ \norm{\optscheme_{\dictionary} (\randomvec)}^2 \bigr] &= \EE_{\mathsf{P}} \bigl[ \norm{D^+ \randomvec}^2 \bigr] \\
&= \EE_{\mathsf{P}} \bigl[ \randomvec\transp (D^+)\transp  D^+ \randomvec \bigr] \\
 &= \EE_{\mathsf{P}} \bigl[   \randomvec\transp \bigl( D\transp (DD\transp )^{-1} \bigr)\transp \bigl( D\transp (DD\transp )^{-1} \bigr) \randomvec \bigr] \\
&= \EE_{\mathsf{P}} \bigl[ \randomvec\transp (DD\transp )^{-1} DD\transp (DD\transp )^{-1} \randomvec \bigr] \\
&= \EE_{\mathsf{P}} \bigl[ \randomvec \transp (DD\transp )^{-1} \randomvec \bigr] \\
&= \EE_{\mathsf{P}} \bigl[ \trace(\randomvec\transp (DD\transp )^{-1} \randomvec) \bigr] \\
&= \EE_{\mathsf{P}} \bigl[ \trace(\randomvec \randomvec\transp (DD\transp )^{-1}) \bigr] \\
&= \trace \left( \EE_{\mathsf{P}} \bigl[ \randomvec \randomvec\transp \bigr] (DD\transp )^{-1} \right).
\end{align*} 
Letting \(  \sigmat{\randomvec}  \Let \EE_{\mathsf{P}} \bigl[ \randomvec \randomvec\transp \bigr] \) and writing \( DD\transp = \sum_{i = 1}^{\dsize} \dict{i}  \dict{i}\transp \), we rephrase \eqref{eq:first-DL} as
\begin{equation}
	\label{eq:DL-1}
	\begin{aligned}
		& \minimize_{( \dict{i} )_{i=1}^{\dsize}}	&&  \trace \biggl( \sigmat{\randomvec}  \biggl( \sum_{i = 1}^{\dsize}  \dict{i}  \dict{i}\transp \biggr)^{-1} \biggr)\\
		& \sbjto		&&
		\begin{cases}
			\inprod{ \dict{i} }{ \dict{i} } = \length{i} \quad \text{for all \( i = \indexes{\dsize} \)} \\
			\Span ( \dict{i} )_{i = 1}^{\dsize} = \R^{\dimension}.
		\end{cases}
	\end{aligned}
\end{equation}
Let \( S \)  be the feasible set for the problem in \eqref{eq:DL-1}. It is clear that \( S \) is non-convex - a family of concentric spheres centered at origin. Let us demonstrate that the objective function of \eqref{eq:DL-1} is convex in the matrix variable \(DD\transp\). We know that whenever \(  \sigmat{\randomvec} \) is a positive definite matrix, invariance of trace with respect to conjugation gives:
\[
	\trace(\sigmat{\randomvec} M^{-1}) = \trace\Bigl(\sigmat{\randomvec}^{1/2} M^{-1} \sigmat{\randomvec}^{1/2} \Bigr) = \trace \Bigl( \bigl( \sigmat{\randomvec}^{-1/2} M \sigmat{\randomvec}^{-1/2} \bigr)^{-1} \Bigr).
\]
From \cite[p.\ 113 and Exercise V.1.15, p.\ 117]{bhatiamatrix} we know that inversion of a matrix is a \emph{matrix convex} map on the set of positive definite matrices. Therefore, for any \( \theta \in [0,1] \) and \( M_1,M_2 \in \posdef{\dimension} \) we have
\begin{multline}
\left( \sigmat{\randomvec}^{-1/2} \bigl( (1-\theta) M_1 + \theta M_2 \bigr) \sigmat{\randomvec}^{-1/2} \right)^{-1}\\
= \left( (1-\theta) \left( \sigmat{\randomvec}^{-1/2} M_1 \sigmat{\randomvec}^{-1/2} \right) + \theta \left( \sigmat{\randomvec}^{-1/2} M_2 \sigmat{\randomvec}^{-1/2} \right) \right)^{-1}\\
\leq (1-\theta) \left( \sigmat{\randomvec}^{-1/2} M_1 \sigmat{\randomvec}^{-1/2} \right)^{-1} + \theta \left( \sigmat{\randomvec}^{-1/2} M_2 \sigmat{\randomvec}^{-1/2} \right)^{-1},
\end{multline}
where \( A \preceq B \) implies that \( B - A \)  is positive semidefinite. Since \( \trace(\cdot) \) is a \emph{linear functional} over the set of \( \dimension \times \dimension \) matrices we have
\begin{multline*}
	\trace \Bigl( \sigmat{\randomvec} \bigl( (1-\theta) M_1 + \theta M_2 \bigr)^{-1} \Bigr) = \trace\Bigl(\Bigl( \sigmat{\randomvec}^{-1/2} \bigl( (1-\theta) M_1 + \theta M_2 \bigr) \sigmat{\randomvec}^{-1/2} \Bigr)^{-1}\Bigr) \\
\leq (1-\theta) \trace \Bigl(\Bigl(\sigmat{\randomvec}^{-1/2} M_1 \sigmat{\randomvec}^{-1/2} \Bigr)^{-1} \Bigr) + \theta \trace\Bigl( \Bigl( \sigmat{\randomvec}^{-1/2} M_2 \sigmat{\randomvec}^{-1/2} \Bigr)^{-1}\Bigr) \\
\leq (1-\theta) \trace ( \sigmat{\randomvec} M_1^{-1}) + \theta \trace ( \sigmat{\randomvec} M_2^{-1} ).
\end{multline*}
In other words, the function \(  M\mapsto \trace ( \sigmat{\randomvec} M^{-1} ) \) is convex on the set of symmetric and positive definite matrices. Moreover, for a collection \( ( \dict{i} )_{i = 1}^{\dsize} \) that is feasible for \eqref{eq:DL-1},
\[
	\mathcal{\dictionary} (0) \ni ( \dict{i} )_{i=1}^{\dsize} \mapsto h( \dict{1} , \ldots,  \dict{\dsize} ) \Let \sum_{i = 1}^{\dsize}  \dict{i}  \dict{i}\transp 
\]
maps into the set of positive definite matrices. Therefore, the objective function in \eqref{eq:DL-1} is convex on \( \image(h) \). This allows us to translate the feasible set of \eqref{eq:DL-1} to the set of matrices \( M \) formed by all feasible collections \( ( \dict{i} )_{i = 1}^{\dsize} \), i.e., on \( h( \mathcal{\dictionary} (0)) \).

For every \( M \in \posdef{\dimension} \), let \( ( \eigval{i}(M) )_{i = 1}^{\dimension} \) denote the sequence of eigenvalues of \( M \) arranged in  non-increasing order. Let us consider a set \( R \) of positive definite matrices whose eigenvalues satisfy the majorization condition of Theorem \ref{theorem: casazza rod}, i.e., \eqref{eq:rod-majorization-condition} is satisfied for the \( \length{i} \)'s of \eqref{eq:DL-1}. In other words, the set \( R \) is defined by 
\[
R \Let \set[\big]{M \in \posdef{\dimension} \suchthat \big( \length{1}, \length{2}, \ldots, \length{\dsize} \big) \majorize \big( \eigval{1}(M), \eigval{2}(M), \ldots, \eigval{\dimension}(M), 0, \ldots, 0 \big) } .
\]
On the one hand, from Theorem \ref{theorem: casazza rod} we know that any positive definite matrix \( M \in R \) can be decomposed as 
\[
	M = \sum_{i = 1}^{\dsize} \dict{i}\dict{i}\transp \quad \text{with } \inprod{ \dict{i} }{\dict{i}} = \length{i} \text{ for all \( i = \indexes{\dsize} \).}
\]
The fact that \( M \) is positive definite implies that \( \Span ( \dict{i} )_{i = 1}^{\dsize} = \R^{\dimension} \). Therefore, \( ( \dict{i} )_{i = 1}^{\dsize} \in \mathcal{\dictionary} (0) \) and \( M = h( \dict{1}, \ldots, \dict{\dsize} ) \), which imply that
\begin{equation}
	\label{4}
	R \subset h(\mathcal{\dictionary} (0)).
\end{equation}
On the other hand, for any dictionary \( \dictionary \) of vectors \( ( \dict{i} )_{i = 1}^{\dsize} \in \mathcal{\dictionary} (0) \), we have \newline \( h( \dictionary ) = \sum_{i = 1}^{\dsize} \dict{i}\dict{i}\transp \in \posdef{\dimension} \), and again from Theorem \ref{theorem: casazza rod} we observe that \( \big( \length{i} \big)_{i = 1}^{\dsize} \majorize \big( \eigval{1} (h( \dictionary )), \ldots, \eigval{\dimension} (h( \dictionary )), 0, \ldots,0 \big) \). Therefore, by definition of \( R \),
\begin{equation}
	\label{5}
	h(\mathcal{\dictionary} (0)) \subset R.
\end{equation}
From \eqref{4} and \eqref{5} we conclude that \( h(\mathcal{\dictionary} (0) ) = R \). The problem \eqref{eq:DL-1} is, therefore, equivalent to:
\begin{equation}
	\label{eq:DL-2}
	\begin{aligned}
		& \minimize_{M \; \in \; \posdef{\dimension} }	&& \trace \bigl(\sigmat{\randomvec} M^{-1}\bigr)\\
		& \sbjto	&& 		\begin{cases} 
		              			 0 < \eigval{\dimension}(M) \leq \cdots \leq \eigval{1}(M) \\
		              			 \big( \length{1}, \length{2}, \ldots, \length{\dsize} \big) \majorize \big( \eigval{1}(M), \ldots, \eigval{\dimension}(M),0, \ldots, 0 \big). 
		                    \end{cases}
	\end{aligned}
\end{equation}

We know that every positive definite matrix \( M \) can be written as \( M = U_M \Lambda_M U_M\transp  \), where \( \Lambda_M \) is a diagonal matrix containing the eigenvalues of \( M \) along the diagonal and \( U_M \) is an orthogonal matrix whose columns are the eigenvectors of \( M \). Alternatively, for any non-increasing sequence \( (\eigval{i})_{i = 1}^{\dimension} \) of positive real numbers that satisfies \( (\length{1}, \ldots, \length{\dsize}) \majorize (\eigval{1}, \ldots, \eigval{\dimension}, 0, \ldots,0) \), let \( \Lambda \Let \diag(\eigval{1}, \ldots, \eigval{\dimension}) \); then we see at once that the positive definite matrix \( M \Let U \Lambda U\transp \) is feasible for \eqref{eq:DL-2} for every orthogonal matrix \( U \). Employing the fact that \( \trace \bigl( \sigmat{\randomvec} U \Lambda^{-1} U\transp \bigr) = \trace \bigl(  U\transp \sigmat{\randomvec} U \Lambda^{-1} \bigr) \), we write the following optimization problem that is equivalent to \eqref{eq:DL-2}.
\begin{equation}
\label{eq:DL-3}
\begin{aligned}
		& \minimize_{\substack{ (\eigval{i})_i \subset \; \R, \\ U \; \in \; \ortho{\dimension} } }	&& \trace \bigl(U\transp \sigmat{\randomvec} U \Lambda^{-1} \bigr)\\
		& \sbjto	&& 		\begin{cases} 
		              			 0 < \eigval{\dimension} \leq \cdots \leq \eigval{1}, \\
		              			 \big( \length{1}, \length{2}, \ldots, \length{\dsize} \big) \majorize \big( \eigval{1}, \ldots, \eigval{\dimension},0, \ldots, 0 \big), \\
		              			 \Lambda = \diag(\eigval{1}, \ldots, \eigval{\dimension}). 
		                    \end{cases}
	\end{aligned}
\end{equation}

For every non-increasing sequence \( (\eigval{i})_{i = 1}^{\dimension} \) that is feasible for \eqref{eq:DL-3}, let us consider the following optimization problem:
\begin{equation}
\label{eq:optX}
		 \minimize_{U \; \in \; \ortho{\dimension} }	\quad \trace \bigl( U\transp \sigmat{\randomvec} U \Lambda^{-1} \bigr).
\end{equation}
Let \( ( \eigvalsig{i} )_{i = 1}^{\dimension} \) be the non-increasing sequence of eigenvalues of \( \sigmat{\randomvec} \) and \( ( \eigvalsig{i}' )_{i = 1}^{\dimension} \) be the diagonal entries of the matrix \( U\transp \sigmat{\randomvec} U \) for \( U \in \ortho{\dimension} \). Let \( \pi \) be a permutation map on \( (\indexes{\dimension}) \) such that \(  \eigvalsig{\pi(1)}' \geq  \eigvalsig{\pi(2)}' \geq \cdots  \eigvalsig{\pi(\dimension)}' \). Since, the sequence \( (\eigval{i})_{i = 1}^{\dimension} \) is in non-increasing order, we know that the sequence \( (\frac{1}{\eigval{i}})_{i = 1}^{\dimension} \) is non-decreasing, and from the rearrangement inequality \eqref{rearrangement inequality}, we have, 
\begin{equation}
\sum_{i = 1}^{\dimension} \frac{\eigvalsig{\pi(i)}'}{\eigval{i}} \; \leq \; \sum_{i = 1}^{\dimension} \frac{\eigvalsig{i}'}{\eigval{i}}.
\end{equation}
Therefore, the optimization problem \eqref{eq:optX} reduces to
\begin{equation}
\label{eq:optX1}		 
	\begin{aligned}
		& \minimize_{ U \; \in \; \ortho{\dimension} }  &&  \sum_{i = 1}^{\dimension} \frac{\eigvalsig{i}'}{\eigval{i}} \\
		& \sbjto	&& 	\begin{cases}	
		                   \eigvalsig{i}' = \inprod{e_i}{U\transp \sigmat{\randomvec} U e_i} \quad \text{for all \( i = \indexes{\dimension} \),} \\
		                   \eigvalsig{\dimension}' \leq \cdots \leq \eigvalsig{1}'.	
		                \end{cases}              			 
	\end{aligned}
\end{equation}
Let \(U \in \ortho{\dimension} \) be feasible for \eqref{eq:optX1} and \( (\eigvalsig{i}')_{i = 1}^{\dimension} \) be the corresponding non-increasing sequence of the diagonal entries of the matrix \( U\transp \sigmat{\randomvec} U \) whose eigenvalues are \( (\eigvalsig{i})_{i = 1}^{\dimension} \). From the \emph{Schur-Horn} Theorem \ref{theorem: schur horn} it follows that \( (\eigvalsig{i}')_{i = 1}^{\dimension} \majorize (\eigvalsig{i})_{i = 1}^{\dimension} \), and therefore, from  \emph{Kadison's} Lemma \ref{lemma: Kadison} we conclude that \( \R^{\dimension} \ni \eigvalsig{}' \Let (\eigvalsig{1}', \ldots, \eigvalsig{\dimension}')\transp \in \permpolytope{(\eigvalsig{1}, \ldots, \eigvalsig{\dimension})\transp} \). Recall that for any \( y \in \R^{\dimension} \), \( \permpolytope{y} \) is the permutation polytope of \( y \), it is a bounded polytope in \( \R^{\dimension} \) whose extreme points are all the permutations of \( y \). By defining \( \eigval{}^{-1} \), \( \eigvalsig{} \in \R^{\dimension} \) by \( \eigval{}^{-1} \Let (\frac{1}{\eigval{1}}, \ldots, \frac{1}{\eigval{\dimension}})\transp \) and \( \eigvalsig{} \Let (\eigvalsig{1}, \ldots, \eigvalsig{\dimension})\transp \), it is immediate that the optimum value in \eqref{eq:optX1} is bounded below by the optimum value of:
\begin{equation}
\label{eq:optX2}
\minimize_{ x \; \in \; \permpolytope{\eigvalsig{}} }	\quad \inprod{x}{\eigval{}^{-1}}.
\end{equation}
The optimization problem \eqref{eq:optX2} is a linear program, and from the fundamental theorem of linear programming we know that one of the extreme points of \( \permpolytope{\eigvalsig{}} \) is an optimal solution. From the fact that the extreme points of \( \permpolytope{\eigvalsig{}} \) are the vectors obtained by permuting the components of \( (\eigvalsig{1}, \ldots, \eigvalsig{\dimension})\transp \), \eqref{eq:optX2} reduces to:
\begin{equation}
\label{eq:optX3}
\minimize_{ \pi \; \in \; \Pi_{\dimension} }  \quad  \inprod{\eigvalsig{}(\pi)}{\eigval{}^{-1}},
\end{equation}
where \( \Pi_{\dimension} \) is the symmetric group on \( ( \indexes{\dimension} ) \) and \( \eigvalsig{}(\pi) \Let (\eigvalsig{\pi(1)}, \ldots, \eigvalsig{\pi(\dimension)})\transp \). Since the sequences \( ( \frac{1}{\eigval{i}} )_{i = 1}^{\dimension} \) and \( ( \eigvalsig{i} )_{i = 1}^{\dimension} \) are non-decreasing and non-increasing respectively, the rearrangement inequality \eqref{rearrangement inequality} implies that,
\[
\sum\limits_{i = 1}^{\dimension} \frac{\eigvalsig{i}}{\eigval{i}} \leq \sum\limits_{i = 1}^{\dimension} \frac{\eigvalsig{\pi(i)}}{\eigval{i}}. 
\]

We note that no characterization of an optimal solution to \eqref{eq:optX} has been given so far. We shall revisit \eqref{eq:optX} with \( ( \eigval{i} )_{i = 1}^{\dimension} = ( \eigval{i}\opt )_{i = 1}^{\dimension} \) (the optimal sequence), and characterize an optimal solution \( X\opt \) for this special case, which is sufficient. 
 
Thus, the optimization problem \eqref{eq:DL-3} reduces to the following:
\begin{equation}
\label{eq:DL-5}
\begin{aligned}
		& \minimize_{( \eigval{i} )_i \subset \; \R }	&& \sum_{i = 1}^{\dimension} \frac{\eigvalsig{i}}{\eigval{i}} \\
		& \sbjto	&& 		\begin{cases} 
		              			 0 < \eigval{\dimension} \leq \cdots \leq \eigval{1}, \\
		              			 ( \length{1}, \length{2}, \ldots, \length{\dsize} ) \majorize ( \eigval{1}, \ldots, \eigval{\dimension},0, \ldots, 0 ). 
		                    \end{cases}
\end{aligned}
\end{equation}
Let us define \( (\length{i}')_{i = 1}^{\dimension} \) by \( \length{i}' \Let \length{i} \) for all \( i = \indexes{\dimension - 1} \) and \( \length{\dimension}' \Let \sum_{i = \dimension}^{\dsize} \length{i} \). By eliminating the constraint \( \eigval{\dimension} \leq \cdots \leq \eigval{1} \) in \eqref{eq:DL-5} and rewriting the constraint \( ( \length{1}, \length{2}, \ldots, \length{\dsize} ) \majorize ( \eigval{1}, \ldots, \eigval{\dimension},0, \ldots, 0 ) \) in terms of the sequence \( ( \length{i}' )_{i = 1}^{\dimension} \), we arrive at:
\begin{equation}
\label{eq:DL-6}
\begin{aligned}
		& \minimize_{ ( \eigval{i} )_i \subset \; \R }	&& \sum_{i = 1}^{\dimension} \frac{\eigvalsig{i}}{\eigval{i}} \\
		& \sbjto	&& 		\begin{cases} 
		              			  \sum_{i = 1}^j \length{i}' \; \leq \; \sum_{i = 1}^j \eigval{i} \quad \text{for all \( j = \indexes{\dimension - 1} \),} \\
\sum_{i = 1}^{\dimension} \length{i}' \; = \; \sum_{i = 1}^{\dimension} \eigval{i} .
		                    \end{cases}
	\end{aligned}
\end{equation}
It is obvious that every sequence \( ( \eigval{i} )_{i = 1}^{\dimension} \) that is feasible for \eqref{eq:DL-5} is also feasible for \eqref{eq:DL-6}, hence the optimal value of \eqref{eq:DL-6} is a lower bound for that of \eqref{eq:DL-5}. The problem \eqref{eq:DL-5} is a variant of \eqref{eq: primalgeneral}, and therefore from Lemma \ref{lemma: primal dual} we conclude that \eqref{eq:DL-6} admits a unique optimal solution \( (\eigval{i}\opt)_{i = 1}^{\dimension} \). Since \( (\eigvalsig{i})_{i = 1}^{\dimension} \) is a non-increasing sequence, we also conclude that the optimal solution \( ( \eigval{i}\opt )_{i = 1}^{\dimension} \) satisfies \( \eigval{\dimension}\opt \leq \cdots \leq \eigval{1}\opt \). Therefore, \( (\eigval{i}\opt)_{i = 1}^{\dimension} \) is also feasible for \eqref{eq:DL-5}. This implies that \( (\eigval{i}\opt)_{i = 1}^{\dimension} \) is the unique optimal solution to the optimization problem \eqref{eq:DL-5}.

From Lemma \ref{lemma: primal dual} we know that the optimization problem 
\begin{equation}
\begin{aligned}
		& \minimize_{( \dualvar{t} )_t \subset \; \R}	&&  \sum_{t = 1}^{\dimension}  \length{t}' \dualvar{t}^2 - 2  \sqrt{\eigvalsig{t}} \dualvar{t}  \\
		& \sbjto						&&
		0 \leq \dualvar{1} \leq \cdots \leq \dualvar{\dimension},
\end{aligned}
\end{equation}
admits a unique optimal solution \( (\dualvar{t} \opt)_{t = 1}^{\dimension} \) with an optimal value of \( q \opt \). If an ordered set \( (\partition{1}, \partition{2}, \ldots, \partition{T}) \subset (\indexes{\dimension}) \) is defined as
		\[
        \begin{aligned}
        \label{eq: partition}
        \partition{1} & \Let 1, \\
        \partition{l} & \Let \min \{ t \; \vert \; \partition{(l - 1)} < t \leq \dimension, \; \dualvar{(t - 1)} \opt < \dualvar{t} \opt \} \quad  \text{for all \( l = 2,\ldots, T \)}, 
        \end{aligned}
		\]
then the optimal solution \( (\eigval{i} \opt)_{i = 1}^{\dimension} \) and the optimal value \( p\opt \) of \eqref{eq:DL-5} are given by
\begin{equation}
\label{eq:optimal-solution-to-DL5}
\begin{aligned}
(\eigval{i} \opt)_{i = 1}^{\dimension} & = \eigvalmap{(\sqrt{\eigvalsig{i}})_{i = 1}^{\dimension}}{(\length{i}')_{i = 1}^{\dimension}}{(\partition{l})_{l = 1}^{T}}, \\
- q \opt = p \opt & = \optval{(\sqrt{s_i})_{i = 1}^{\poseig}}{(a_i)_{i = 1}^{\poseig}}{(\partition{l})_{l = 1}^{T}}.
\end{aligned}
\end{equation}

We note that we have given a characterization only for the optimal \( ( \eigval{i}\opt )_{i = 1}^{\dimension} \) in \eqref{eq:DL-3} and not for the optimal orthogonal matrix \( U\opt \) there. We need both for a complete characterization of optimal solution \( M\opt \) of \eqref{eq:DL-2}. To that end, let us consider the following instance of \eqref{eq:optX}
\begin{equation}
\label{eq:optimalU}
		 \minimize_{U \; \in \; \ortho{\dimension} }	\quad \trace \bigl( U\transp \sigmat{\randomvec} U {\Lambda\opt}^{-1} \bigr),
\end{equation}
where \( \Lambda\opt \Let \diag (\eigval{1}\opt, \ldots, \eigval{\dimension}\opt ) \).
\begin{claim}
\label{claim:optimalU}
An orthogonal matrix \( U\opt \Let \pmat{u\opt_1}{u\opt_2}{u\opt_{\dimension}} \) is an optimal solution to \eqref{eq:optimalU} \textbf{if and only if}  \( \sigmat{\randomvec}u\opt_i = \eigvalsig{i} u\opt_i \) for all \( i = 1, 2, \ldots, \dimension \).
\end{claim}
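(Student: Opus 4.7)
The plan is to prove both directions separately. For the \emph{sufficiency}, suppose \( \sigmat{\randomvec} u\opt_i = \eigvalsig{i} u\opt_i \) for every \( i = \indexes{\dimension} \). A direct computation gives \( {U\opt}\transp \sigmat{\randomvec} U\opt = \diag(\eigvalsig{1}, \ldots, \eigvalsig{\dimension}) \), so the objective of \eqref{eq:optimalU} at \( U\opt \) equals \( \sum_{i = 1}^{\dimension} \eigvalsig{i}/\eigval{i}\opt \). The chain of reductions from \eqref{eq:optX} through \eqref{eq:optX3}, combined with the rearrangement inequality \eqref{rearrangement inequality} applied to the non-increasing sequence \( (\eigvalsig{i}) \) and the non-decreasing sequence \( (1/\eigval{i}\opt) \), shows that this value is the infimum, so \( U\opt \) is optimal.

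For the \emph{necessity}, let \( U\opt \) be any optimal solution and set \( \eigvalsig{i}' \Let ({U\opt}\transp \sigmat{\randomvec} U\opt)_{ii} \). Permuting the columns of \( U\opt \) so as to arrange \( (\eigvalsig{i}') \) in non-increasing order cannot increase the objective by the rearrangement-inequality step already carried out in the text; since \( U\opt \) is optimal, the rearrangement inequality is tight, and by Lemma \ref{lemma:primal-dual-2} (which asserts that \( \eigval{i}\opt = \eigval{j}\opt \) implies \( \eigvalsig{i} = \eigvalsig{j} \)) the required permutation only swaps columns within blocks of equal \( \eigval{}\opt \)-values, producing an orthogonal matrix with exactly the same column set. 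Consequently, after this reordering it suffices to establish \( \sigmat{\randomvec} u\opt_i = \eigvalsig{i} u\opt_i \) for the rearranged matrix, which we continue to denote \( U\opt \); the original claim then follows because within each block the eigenvalues \( \eigvalsig{i} \) coincide. The Schur-Horn Theorem \ref{theorem: schur horn} now gives \( (\eigvalsig{i}')_{i = 1}^{\dimension} \majorize (\eigvalsig{i})_{i = 1}^{\dimension} \).

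Set \( S_k \Let \sum_{i = 1}^k (\eigvalsig{i} - \eigvalsig{i}') \); majorization yields \( S_k \geq 0 \) for \( k = \indexes{\dimension - 1} \) and \( S_{\dimension} = 0 \). Abel summation turns the optimality identity \( \sum_{i = 1}^{\dimension} \eigvalsig{i}'/\eigval{i}\opt = \sum_{i = 1}^{\dimension} \eigvalsig{i}/\eigval{i}\opt \) into
\[
0 = \sum_{i = 1}^{\dimension - 1} S_i \bigl( 1/\eigval{i}\opt - 1/\eigval{(i + 1)}\opt \bigr).
\]
Each summand is non-positive because \( (\eigval{i}\opt) \) is non-increasing, so every summand vanishes; in particular \( S_k = 0 \) at every block boundary where \( \eigval{k}\opt > \eigval{(k + 1)}\opt \). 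A short block-by-block analysis---using Lemma \ref{lemma:primal-dual-2} to identify each maximal range of constant \( \eigval{}\opt \) with a block of constant \( \eigvalsig{} \), and combining the partial-sum equalities at block endpoints with the non-increasing monotonicity of \( (\eigvalsig{i}') \)---forces \( \eigvalsig{i}' = \eigvalsig{i} \) for every \( i \).

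Set \( A \Let {U\opt}\transp \sigmat{\randomvec} U\opt \). This symmetric matrix has eigenvalues \( (\eigvalsig{i})_{i = 1}^{\dimension} \) and, by the preceding step, diagonal entries \( (\eigvalsig{i})_{i = 1}^{\dimension} \). Invariance of the Frobenius norm under orthogonal conjugation then yields
\[
\sum_{i, j} A_{ij}^2 = \trace(A^2) = \sum_{i = 1}^{\dimension} \eigvalsig{i}^2 = \sum_{i = 1}^{\dimension} A_{ii}^2,
\]
so every off-diagonal entry of \( A \) vanishes. Thus \( A = \diag(\eigvalsig{1}, \ldots, \eigvalsig{\dimension}) \), which is exactly \( \sigmat{\randomvec} u\opt_i = \eigvalsig{i} u\opt_i \) for every \( i \). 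The principal obstacle is the careful bookkeeping of multiplicities: when \( (\eigval{i}\opt) \) has repeated entries the rearrangement inequality is not strict and the underlying LP optimum is attained on a face of the permutation polytope rather than at a vertex, and Lemma \ref{lemma:primal-dual-2} is the decisive tool that keeps the block structures of \( (\eigval{}\opt) \) and \( (\eigvalsig{}) \) synchronized and makes the block-by-block conclusion valid.
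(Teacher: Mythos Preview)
Your proof is correct. Both directions are sound: sufficiency is immediate, and for necessity your Abel--summation argument combined with the block-by-block analysis (using Lemma~\ref{lemma:primal-dual-2} to identify the $\eigval{}\opt$-blocks with $\eigvalsig{}$-blocks) correctly forces $\eigvalsig{i}' = \eigvalsig{i}$ for every $i$, after which the Frobenius-norm identity $\trace(A^2) = \sum_i \eigvalsig{i}^2$ cleanly yields diagonality of $A = {U\opt}\transp \sigmat{\randomvec} U\opt$.

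The paper's route is different in emphasis: it invokes Lemma~\ref{lemma:primal-dual-3} to characterize the optimal permutations in the reduced problem~\eqref{eq:opt-perm-in-theorem} and then asserts, rather tersely, that optimality of $U\opt$ forces the diagonal entries of ${U\opt}\transp \sigmat{\randomvec} U\opt$ to equal $(\eigvalsig{i})$ and that this in turn is equivalent to the columns being eigenvectors. Your argument is more self-contained: you bypass Lemma~\ref{lemma:primal-dual-3} entirely, instead deducing $\eigvalsig{i}' = \eigvalsig{i}$ directly from majorization and Abel summation, and you make the ``diagonal entries equal eigenvalues $\Rightarrow$ matrix is diagonal'' step explicit via the Frobenius norm. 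Both proofs lean on Lemma~\ref{lemma:primal-dual-2} as the structural tool tying the block decomposition of $(\eigval{i}\opt)$ to that of $(\eigvalsig{i})$.

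One small expository remark: the statement that the sorting permutation of $(\eigvalsig{i}')$ only swaps columns within blocks of constant $\eigval{}\opt$ follows already from the equality case of the rearrangement inequality, not from Lemma~\ref{lemma:primal-dual-2}; the latter is what you actually need in the \emph{next} sentence, to argue that the conclusion $\sigmat{\randomvec} u\opt_i = \eigvalsig{i} u\opt_i$ transfers back from the rearranged matrix to the original (since $\eigvalsig{i}$ is constant on those same blocks). The mathematics is unaffected.
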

\begin{proof}
As seen earlier in the proof, \eqref{eq:optimalU} reduces to:
\begin{equation}
\label{eq:opt-perm-in-theorem}
\minimize_{\pi \in \Pi_{\dimension} } \quad \sum_{i = 1}^{\dimension} \frac{\eigvalsig{\pi(i)}}{\eigval{i}\opt}.
\end{equation}
Since the sequence \( (\eigval{i}\opt)_{i = 1}^{\dimension} \) is the optimal solution to \eqref{eq:DL-6}, in view of Lemma \ref{lemma:primal-dual-3} we conclude that a permutation map \( \pi\opt \) is an optimal solution of \eqref{eq:opt-perm-in-theorem} if and only if \( \eigvalsig{\pi\opt(1)} \geq \eigvalsig{\pi\opt(2)} \geq \cdots \geq \eigvalsig{\pi\opt(\dimension)} \) and consequently \( \eigvalsig{\pi\opt(i)} = \eigvalsig{i} \) for all \( i = \indexes{\dimension} \), it should be noted that there are permutation maps other than the identity that also are optimal solutions to \eqref{eq:opt-perm-in-theorem}. For \( U\opt \) to be an optimal solution of \eqref{eq:opt-perm-in-theorem}, we need the diagonal entries of the matrix \( {U\opt}\transp \sigmat{\randomvec} U\opt \) to be equal to and arranged in the order of \( (\eigvalsig{\pi\opt(i)})_{i = 1}^{\dimension} \), and this happens if and only if \( \sigmat{\randomvec} u\opt_i = \eigvalsig{\pi\opt(i)} u\opt_i = \eigvalsig{i} u\opt_i \) for all \( i = \indexes{\dimension} \).  
\end{proof}

Returning back to \eqref{eq:DL-3}, we observe that even though the optimal sequence \( (\eigval{i})_{i = 1}^{\dimension} \) is unique, an optimal orthogonal matrix \( U\opt \) is, however, not. It is now obvious that every optimal solution \( M\opt \) to \eqref{eq:DL-2} is of the form:
\[
M\opt \Let \sum\limits_{i = 1}^{\dimension} \eigval{i}\opt \; u\opt_i {u\opt_i}\transp,
\]
where \( (u\opt_i)_{i = 1}^{\dimension} \) is \emph{any} orthonormal sequence of eigenvectors of \( \sigmat{\randomvec} \) corresponding to the eigenvalues \( (\eigvalsig{i})_{i = 1}^{\dimension} \).

From feasibilty of \( (\eigval{i}\opt)_{i = 1}^{\dimension} \) for \eqref{eq:DL-3}, we conclude from Theorem \ref{theorem: casazza rod} that there exists a sequence of vectors \( (\dict{i}\opt)_{i = 1}^{\dsize} \subset \R^{\dimension} \) such that \( \inprod{\dict{i}\opt}{\dict{i}\opt} = \length{i} \) for all \( i = \indexes{\dsize} \), and that \( M\opt \) admits the rank-\( 1 \) decomposition
\begin{equation}
\label{eq:Mstar-decomposition}
M\opt = \sum_{i = 1}^{\dsize} \dict{i}\opt { \dict{i}\opt }\transp.
\end{equation}
Clearly, the dictionary \( \dictionary\opt \Let (\dict{i}\opt)_{i = 1}^{\dsize} \) is feasible for \eqref{eq:DL-1} and from the equivalence of optimization problems \eqref{eq:DL-1} and \eqref{eq:DL-2} we conclude that the dictionary \( \dictionary\opt \) is optimal for \eqref{eq:DL-1}.

It remains to define the optimal scheme. Let us define
\[
	\R^{\dimension} \ni v \mapsto \scheme\opt (v) \Let \pmat{\dict{1}\opt}{\dict{2}\opt}{\dict{\dsize}\opt}^+ v \in \R^{\dsize}.
\]
It is evident that \( \scheme\opt \) is feasible for \eqref{eq:DL-problem}. But then the objective function in \eqref{eq:DL-problem} evaluated at \( \dictionary = \dictionary\opt \) and \( \scheme = \optscheme \) must be equal to \( p\opt \). Since \( p\opt \) is also a lower bound for the optimal value of \eqref{eq:DL-problem}, the problem \eqref{eq:DL-problem} admits a solution. An optimal dictionary-scheme pair is, therefore, given by
\begin{equation}
	\label{DL solution 1}
	\begin{cases}
		\dictionary\opt = ( \dict{i}\opt )_{i = 1}^{\dsize} \text{ obtained from the decomposition \eqref{eq:Mstar-decomposition}, and} \\
		\R^{\dimension} \ni v\mapsto \scheme\opt(v) = \pmat{\dict{1}\opt}{\dict{2}\opt}{\dict{\dsize}\opt}^+ v \in \R^{\dsize} .
	\end{cases}
\end{equation}
The proof is now complete.

\subsection{Proof of Lemma \ref{lemma 2}}
\label{subsection:proof-of-lemma2}

We argue by contradiction, and suppose that the assertion of the Lemma is false. If we denote by $x_i$ the orthogonal projection of $\dict{i}$ on \( X_{\randomvec} (\const) \) and by \( y_i \) the orthogonal projection of \( \dict{i} \) on the orthogonal complement of \( X_{\randomvec} (\const) \), we must have \( \norm{x_i} < \sqrt{\length{i}} \) for at least one value of \( i \). If \( \scheme \) is an optimal scheme of representation, feasibility of \( \scheme \) gives, for any \( v \in R_{\randomvec} (\const) \),
\begin{equation}\label{lemma 2 eq 1}
\begin{aligned}
	v - \const	& = \sum_{i = 1}^K \dict{i} f_i(v) = \biggl(\sum_{i = 1}^K x_i f_i(v)\biggr) + \biggl(\sum_{i = 1}^K y_i f_i(v)\biggr)\\
		 & = \sum_{\substack{i = 1,\\ \norm{x_i} \neq 0}}^K x_i f_i(v) + 0.
\end{aligned}
\end{equation}
Fix a unit vector \( x\in X_{\randomvec} (\const) \), and define a dictionary \( ( \dict{i}\opt )_{i=1}^{\dsize} \) by
\begin{align*}
\dict{i}\opt \Let 
\begin{dcases}
\frac{\sqrt{\length{i}}}{\norm{x_i}} x_i & \text{if } \norm{x_i} \neq 0,\\
\sqrt{\length{i}} x	& \text{otherwise}.
\end{dcases}
\end{align*}
We see immediately that 
\[
	\Span ( \dict{i}\opt )_{i = 1}^{\dsize} \supset \Span ( x_i )_{i = 1}^{\dsize} \supset R_{\randomvec} (\const) \quad \text{and} \quad \inprod{\dict{i}\opt}{\dict{i}\opt} = \length{i} \text{ for all }i = \indexes{\dsize}.
\]
In other words, the dictionary of vectors \( ( \dict{i}\opt )_{i = 1}^{\dsize} \) is feasible for the problem \eqref{eq:DL-problem-general}. Let us now define a scheme \( \optscheme \) by
\[
\R^{\dimension} \ni v\mapsto \optscheme (v) \Let \diag \left( \frac{\norm{x_1}}{\sqrt{\length{1}}}, \frac{\norm{x_2}}{\sqrt{\length{2}}}, \ldots,\frac{\norm{x_{\dsize}}}{\sqrt{\length{{\dsize}}}} \right) \scheme (v) \in \R^{\dsize}.
\] 
For any \( v \in R_{\randomvec} (\const) \), using the dictionary of vectors \( ( \dict{i}\opt )_{i = 1}^{\dsize} \), we get
\begin{equation}\label{lemma 2 eq 2}
\begin{aligned}
	\sum_{i = 1}^{\dsize} \dict{i}\opt \optscheme_i (v) &= \sum_{i = 1}^{\dsize} \dict{i}\opt \frac{\norm{x_i}}{\sqrt{\length{i}}} \scheme_i (v) = \sum_{\substack{i = 1,\\\norm{x_i} \neq 0}}^{\dsize} \frac{\norm{x_i}}{\sqrt{\length{i}}} \frac{\sqrt{\length{i}}}{\norm{x_i}} x_i \scheme_i(v) = v,
\end{aligned}
\end{equation}
where the last equality follows from \eqref{lemma 2 eq 1}. Thus, \(  \optscheme (\cdot) \) along with the dictionary of vectors \( ( \dict{i}\opt )_{i = 1}^{\dsize} \) is feasible for problem \eqref{eq:DL-problem-general}. But for any \( v \in R_{\randomvec} (\const) \) we have
\[
\norm{\optscheme (v)}^2 = \sum_{i = 1}^{\dsize} \bigl(\optscheme_i (v) \bigr)^2 = \sum_{i = 1}^{\dsize} \frac{\norm{x_i}^2}{\length{i}} \bigl(\scheme_i (v) \bigr)^2 < \sum_{i = 1}^{\dsize} \bigl( \scheme_i (v) \bigr)^2 = \norm{\scheme (v) }^2,
\]
where the strict inequality holds due to the fact that \( \norm{x_i} < \sqrt{\length{i}} \) for at least one \( i \). However, this contradicts the assumption that the pair \( ( \dict{i} )_{i = 1}^{\dsize} \) along with the scheme \( \scheme \) is optimal for \eqref{eq:DL-problem-general}, and the assertion follows.

\subsection{Proof of Theorem \ref{theorem:DL-theorem-2}}

Consider \eqref{eq:DL-problem-general}, fix some \( \const \in \R^{\dimension} \), and let the dimension of \( X_{\randomvec} (\const) \) be \( \dimension (\const) \) with \( 
\dimension (\const) < \dimension \). It should be noted that \( X_{\randomvec} (\const) = \image(\sigmat{\randomvec - \const} ) \), and therefore, a basis of \( X_{\randomvec} (\const) \) can be obtained by computing the eigenvectors of \( \sigmat{\randomvec - \const} \) corresponding to positive (non-zero) eigenvalues. Let \( ( u_i (\const) )_{i = 1}^{\dimension (\const)} \) be the orthonormal eigenvectors of \( \sigmat{\randomvec - \const} \) corresponding to the eigenvalues \( ( \eigvalsig{i} (\const) )_{i = 1}^{\dimension (\const)} \). Let us define 
\begin{align*}
   U(\const) &\Let \pmat{u_1 (\const)}{u_2 (\const)}{u_{\dimension (\const)} (\const)}, \\
   \projectedrv &\Let {U (\const)}\transp (\randomvec - \const).
\end{align*}

Since \( ( u_i (\const) )_{i = 1}^{\dimension (\const)} \) is a basis for \( X_{\randomvec} (\const) \), every vector in \(  X_{\randomvec} (\const) \) can be uniquely represented using this basis. We know that \( \randomvec - \const \) takes values in \(  X_{\randomvec} (\const) \) \( \PP \)-almost surely. Clearly \( \projectedrv \) is the unique representation of \( \randomvec - \const \) in terms of the basis \( ( u_i (\const) )_{i = 1}^{\dimension (\const)} \). Similarly, let $\dictx{i}$ be the representation of the dictionary vector $\dict{i}$ in the basis $ ( u_i (\const) )_{i = 1}^{\dimension (\const)} $, i.e., $\dict{i} = U (\const) \dictx{i}$. The constraints on the dictionary vectors can now be equivalently written as :
\begin{itemize}[label=\(\circ\), leftmargin=*]
\item \( \inprod{\dict{i}}{\dict{i}} = \length{i} \quad \Rightarrow \quad \inprod{\dictx{i}}{\dictx{i}} = \length{i} \), and
\item \( \Span ( \dict{i} )_{i = 1}^{\dsize} \supset R_{\randomvec} (\const) \quad \Rightarrow \quad \Span ( \dict{i} )_{i = 1}^{\dsize} = X_{\randomvec} (\const) \quad\Rightarrow\quad \Span ( \dictx{i} )_{i = 1}^{\dsize} = \R^{\dimension (\const)} \).
\end{itemize}

For every feasible scheme \( f \) let us define an associated scheme for representing samples of the random vector \( \projectedrv \) by
\[
	\R^{\dimension (\const)} \ni v \longmapsto \scheme_{\const} (v) \Let \scheme \big( U(\const)v + \const )\in\R^K.
\]
The conditions on feasibility of $f$ in \eqref{eq:DL-problem-general} imply that the scheme $\scheme_{\const}$ is feasible whenever for some feasible dictionary of vectors $( \dictx{i} )_{i = 1}^{\dsize}$ we have
\[
	\pmat{\dictx{1}}{\dictx{2}}{\dictx{{\dsize}}} \scheme_{\const}(\projectedrv) = \projectedrv \quad\text{\( \PP \)-almost surely.}
\]
In other words, in contrast to the problem \eqref{eq:DL-problem-general}, where the optimization is carried out over vectors in $\mathbb{R}^{\dimension}$, we can equivalently consider the same problem in $\mathbb{R}^{\dimension (\const)}$ but with the following modified constraints: 
\begin{equation}\label{eq:DL-problem-general-2}
\begin{aligned}
	& \minimize_{\const, \; ( \dictx{i} )_i, \; \scheme_{\const}}	&& \EE_{\PP} \bigl[ \inprod{\scheme_{\const} (\projectedrv)}{\scheme_{\const} (\projectedrv)} \bigr]\\
	& \sbjto									&&
	\begin{cases}
		\inprod{\dictx{i}}{\dictx{i}} = \length{i} \text{ for all $i = 1,\ldots,{\dsize},$}\\
		\Span ( \dictx{i} )_{i = 1}^{\dsize} = \mathbb{R}^{\dimension (\const)},\\
		\pmat{\dictx{1}}{\dictx{2}}{\dictx{{\dsize}}} \scheme_{\const}(\projectedrv) = \projectedrv \text{\ \ \(\PP\)-almost surely}.
	\end{cases}
\end{aligned}
\end{equation}

For a fixed value of \( \const \), the problem \eqref{eq:DL-problem-general-2} is a version of \eqref{eq:DL-problem}, and the solutions of the latter are characterized by Theorem \ref{theorem:DL-theorem-1}. To prove the assertions of Theorem \ref{theorem:DL-theorem-2} it is sufficient to prove that \( \const\opt \), the optimum value of \( \const \) in \eqref{eq:DL-problem-general-2}, is equal to \( \mu = \EE_{\PP} [\randomvec] \).

Denoting by \( (\eigvalsig{i} (\const))_{i = 1}^{\dimension (\const)} \) the eigenvalues of \( \sigmat{\projectedrv} \), we conclude from the proof of Theorem \ref{theorem:DL-theorem-1} that the optimum value of the problem \eqref{eq:DL-problem-general-2} is equal to that of:
\begin{equation}
\label{eq:DL-problem-primal-dual-theorem-2}
\begin{aligned}
		& \minimize_{ ( \eigval{i} )_i }	&& \sum_{i = 1}^{\dimension (\const)} \frac{\eigvalsig{i} (\const)}{\eigval{i}} \\
		& \sbjto	&& 		\begin{cases} 
		              			 0 < \eigval{\dimension (\const)} \leq \cdots \leq \eigval{1}, \\
		              			 ( \length{1}, \length{2}, \ldots, \length{\dsize} ) \majorize ( \eigval{1}, \ldots, \eigval{\dimension (\const)},0, \ldots, 0 ). 
		                    \end{cases}
\end{aligned}
\end{equation}

Let \( \mu \Let \EE_{\PP} [\randomvec] \), \( \sigmat{}\opt \Let \var (\randomvec) \) and let \( (\eigvalsig{i}\opt)_{i = 1}^{\poseig} \) be the sequence of positive eigenvalues of \( \sigmat{}\opt \) arranged in non-increasing order. Let us consider the following version of \eqref{eq:DL-problem-primal-dual-theorem-2}:
\begin{equation}
\label{eq:DL-problem-primal-dual-1-theorem-2}
\begin{aligned}
		& \minimize_{ ( \eigval{i} )_i }	&& \sum_{i = 1}^{\poseig} \frac{\eigvalsig{i}\opt }{\eigval{i}} \\
		& \sbjto	&& 		\begin{cases} 
		              			 0 < \eigval{\poseig} \leq \cdots \leq \eigval{1} \\
		              			 ( \length{1}, \length{2}, \ldots, \length{\dsize} ) \majorize ( \eigval{1}, \ldots, \eigval{\poseig},0, \ldots, 0 ). 
		                    \end{cases}
\end{aligned}
\end{equation}
We know from Lemma \ref{lemma: primal dual} that both \eqref{eq:DL-problem-primal-dual-theorem-2} and \eqref{eq:DL-problem-primal-dual-1-theorem-2} admit unique optimal solutions, say \( (\eigval{i}\opt (\const))_{i = 1}^{\dimension (\const)} \) and \( (\eigval{i}\opt)_{i = 1}^{\poseig} \), respectively. We shall prove that the optimum value of \eqref{eq:DL-problem-primal-dual-theorem-2} is bounded below by that of \eqref{eq:DL-problem-primal-dual-1-theorem-2}.

Observe that \( \sigmat{\projectedrv} = {U(\const)}\transp \big( \sigmat{\randomvec - \const} \big) U(\const) \). Therefore, the eigenvalues of \( \sigmat{\projectedrv} \) are the positive eigenvalues of \( \sigmat{\randomvec - \const} \). By definition 
\begin{align*}
\sigmat{}\opt &= \EE_{\PP}[ (\randomvec - \mu) (\randomvec - \mu)\transp ] \\ 
& = \EE_{\PP}[ (\randomvec - \const + \const - \mu ) (\randomvec - \const + \const - \mu )\transp ] \\
& = \sigmat{\randomvec - \const} - (\mu - \const) (\mu - \const)\transp.
\end{align*}
Since \( (\mu - \const) (\mu - \const)\transp \in \possemdef{\dimension} \), we conclude that the maximal eigenvalue of \( -(\mu - \const) (\mu - \const)\transp \) is equal to \( 0 \). Let \( r \Let 1 \), and let \( (\eigvalsig{i}\opt)_{i = 1}^{\dimension} \) and \( (\eigvalsig{i} (\const))_{i = 1}^{\dimension} \) be the non-increasing sequence of eigenvalues of \( \sigmat{}\opt \) and \( \sigmat{\randomvec - \const} \), respectively. From Theorem \ref{theorem: eigenvalues of sum of hermitian matrices} we conclude that \( \eigvalsig{i}\opt \leq \eigvalsig{i}(\const) \) for all \( i = \indexes{\dimension} \), which further implies that \( \poseig \leq \dimension(\const) \) for every \( \const \in \R^{\dimension} \). Also, it is readily verified that \( \poseig = \dimension (\const) \) and \( \eigvalsig{i}\opt = \eigvalsig{i}(\const) \) for all \( i = \indexes{\poseig} \), if and only if \( \const = \mu \).

We define \( \beta \geq 1 \) such that \( \beta \sum\limits_{i = 1}^{\poseig} \eigval{i}\opt(\const) = \sum\limits_{i = 1}^{\dimension (\const)} \eigval{i}\opt(\const) \), and a new sequence \( (\eigval{i}')_{i = 1}^{\poseig} \) by \( \eigval{i}' \Let \beta \eigval{i}\opt (\const) \) for all \( i = \indexes{\poseig} \). We see immediately that 
\begin{align*}
\sum_{i = 1}^{j} \eigval{i}' &= \beta \sum_{i = 1}^{j} \eigval{i}\opt (\const) \geq \sum_{i = 1}^{j} \eigval{i}\opt (\const) \geq \sum_{i = 1}^j \length{i} \quad \text{for all \( j = \indexes{\poseig - 1} \)}, \\
\sum_{i = 1}^{\poseig} \eigval{i}' &= \beta \sum_{i = 1}^{\poseig} \eigval{i}\opt (\const) = \sum_{i = 1}^{\dimension (\const)} \eigval{i}\opt (\const) = \sum_{i = 1}^{\dsize} \length{i}.
\end{align*}
In other words, \( (\eigval{i}')_{i = 1}^{\poseig} \) is feasible for the optimization problem \eqref{eq:DL-problem-primal-dual-1-theorem-2}. We also note that 
\begin{align*}
\sum_{i = 1}^{\poseig} \frac{\eigvalsig{i}\opt}{\eigval{i}\opt} &\leq \sum_{i = 1}^{\poseig} \frac{\eigvalsig{i}\opt}{\eigval{i}'} = \frac{1}{\beta} \sum_{i = 1}^{\poseig} \frac{\eigvalsig{i}\opt}{\eigval{i}\opt(\const)} \leq \sum_{i = 1}^{\poseig} \frac{\eigvalsig{i}\opt}{\eigval{i}\opt(\const)} \leq \sum_{i = 1}^{\poseig} \frac{\eigvalsig{i} (\const)}{\eigval{i}\opt(\const)} \leq \sum_{i = 1}^{\dimension (\const)} \frac{\eigvalsig{i} (\const)}{\eigval{i}\opt(\const)},
\end{align*}
and equality holds in the preceding chain if and only if \( \poseig = \dimension (\const) \) and \( \eigvalsig{i}\opt = \eigvalsig{i}(\const) \) for all \( i = \indexes{\poseig} \); equivalently, \emph{if and only if} \( \const = \mu \). Therefore, \( \const\opt = \mu \) is the unique optimal solution to \eqref{eq:DL-problem-primal-dual-1-theorem-2}. 

Let \( (\dictx{i})_{i = 1}^{\dsize} \) be an optimal solution to \eqref{eq:DL-problem-primal-dual-1-theorem-2} with \( \const = \const\opt \). From Theorem \ref{theorem:DL-theorem-1} we know that 
\begin{equation}
\label{eq:rank-1-opt-dictionary}
\sum_{i = 1}^{\dsize} \dictx{i}\opt {\dictx{i}\opt}\transp = \sum_{i = 1}^{\poseig} \eigval{i}\opt u_i' {u_i'}\transp
\end{equation}
for some orthonormal sequence of eigenvectors of \( \sigmat{\projectedrv} \) corresponding to the eigenvalues \( (\eigvalsig{i}\opt)_{i = 1}^{\poseig} \). Since \( \sigmat{}\opt U(\const\opt) u_i' = \eigvalsig{i}\opt U(\const\opt) u_i' \) for all \( i = \indexes{\poseig} \), the optimal dictionary \( (\dict{i}\opt)_{i = 1}^{\dsize} \Let \big( U(\const\opt) \dictx{i}\opt \big)_{i = 1}^{\dsize} \) satisfies: 
\begin{equation}
\label{eq:rank-1-opt-dictionary-1}
\sum_{i = 1}^{\dsize} \dict{i}\opt {\dict{i}\opt}\transp = \sum_{i = 1}^{\poseig} \eigval{i}\opt u_i {u_i}\transp,
\end{equation}
where \( (u_i)_{i = 1}^{\poseig} \Let \big( U(\const\opt)u_i' \big)_{i = 1}^{\poseig} \) is the sequence of orthonormal eigenvectors of \( \sigmat{}\opt \) corresponding to the eigenvalues \( (\eigvalsig{i}\opt)_{i = 1}^{\poseig} \).

Conversely, if \( (\dict{i}\opt)_{i = 1}^{\poseig} \) is a dictionary that satisfies \eqref{eq:rank-1-opt-dictionary-1} for some eigenbasis \( U \Let (u_i)_{i = 1}^{\poseig} \), the dictionary \( (\dictx{i}\opt)_{i = 1}^{\dsize} \Let ( U\transp \dict{i}\opt)_{i = 1}^{\dsize} \) satisfies \eqref{eq:rank-1-opt-dictionary} for \( (u_i')_{i = 1}^{\poseig} \Let (e_i)_{i= 1}^{\poseig} \). The proof is now complete.

\subsection{Proof of Proposition \ref{proposition:tight-frame-and-opt-dictionaries}.}
If $V$ is uniformly distributed over the unit sphere, we have $\Sigma_V = \EE[V V\transp] = \frac{1}{n} I_n$. From Theorem \ref{theorem:DL-theorem-1} and its proof, we know that a dictionary \( (\dict{i}\opt)_{i = 1}^{\dsize} \), in this case, is \( \ell_2 \)-optimal  if and only if it satisfies:
    \[
    \sum_{i = 1}^{\dsize} \dict{i}\opt {\dict{i}\opt}\transp = \sum_{i = 1}^{\dimension} \eigval{i}\opt e_i e_i\transp,
    \]
where \( (\eigval{i}\opt)_{i = 1}^{\dimension} \) is the unique solution to the problem:
\begin{equation}
\label{eq:prop-tight-frames-1}
\begin{aligned}
		& \minimize_{ ( \eigval{i} )_i \subset \; \R }	&& \sum_{i = 1}^{\dimension} \frac{1}{\eigval{i}} \\
		& \sbjto	&& 		( \length{1}, \length{2}, \ldots, \length{\dsize} ) \majorize ( \eigval{1}, \ldots, \eigval{\dimension},0, \ldots, 0 ).
	\end{aligned}
\end{equation}
Clearly, any sequence \( (\eigval{i})_{i = 1}^{\dimension} \) that is feasible for \eqref{eq:prop-tight-frames-1} satisfies the condition \( \sum_{i = 1}^{\dimension} \eigval{i} = \sum_{i = 1}^{\dsize} \length{i} \), and thus, the optimal value of \eqref{eq:prop-tight-frames-1} is bounded below by that of:
\begin{equation}
\label{eq:prop-tight-frames-2}
\begin{aligned}
		& \minimize_{ ( \eigval{i} )_i \subset \; \R }	&& \sum_{i = 1}^{\dimension} \frac{1}{\eigval{i}} \\
		& \sbjto	&& 		\sum_{i = 1}^{\dimension} \eigval{i} = \sum_{i = 1}^{\dsize} \length{i}.
\end{aligned}
\end{equation}
If a sequence \( (\eigval{i}\opt)_{i = 1}^{\dimension} \) is defined by \( \eigval{i}\opt \Let \frac{1}{\dimension} \sum_{i = 1}^{\dsize} \length{i} \) for all \( i = 1, 2, \ldots, \dimension \), then it can be easily verified that the sequence \( (\eigval{i}\opt)_{i = 1}^{\dimension} \) is an optimal solution to \eqref{eq:prop-tight-frames-2}. Moreover, if \( \length{1} \leq \frac{1}{\dimension} \sum_{i = 1}^{\dsize} \length{i} \), it is easy to see that the sequence \( (\eigval{i}\opt)_{i = 1}^{\dimension} \) is feasible for \eqref{eq:prop-tight-frames-1}, and therefore, solves \eqref{eq:prop-tight-frames-1}.

Thus, a dictionary \( (\dict{i}\opt)_{i = 1}^{\dsize} \) is \( \ell_2 \)-optimal if and only if it satisfies
\[
 \sum_{i = 1}^{\dsize} \dict{i}\opt {\dict{i}\opt}\transp = \sum_{i = 1}^{\dimension} \eigval{i}\opt e_i e_i\transp = \left( \frac{1}{\dimension} \sum_{i = 1}^{\dsize} \length{i} \right) \sum_{i = 1}^{\dimension} e_i e_i\transp = \left( \frac{1}{\dimension} \sum_{i = 1}^{\dsize} \length{i} \right) I_n.
\]
Equivalently, \( (\dict{i}\opt)_{i = 1}^{\dsize} \) is \( \ell_2 \)-optimal if and only if it is a tight frame.


\subsection{Proof of Proposition \ref{proposition:robustness-wrt-estimates}}
When \( \mu' \) and \( \sigmat{}' \) are estimated using sufficiently large number of samples, we know that
\[
X_{\randomvec} (\mu') = X_{\randomvec} (\mu) = \image (\sigmat{}\opt) = \image (\sigmat{}') = \Span (\dict{i}')_{i = 1}^{\dsize}.
\]
Let \( (\eigvalsig{i}')_{i = 1}^{\poseig} \) be the non-zero eigenvalues of \( \sigmat{}' \) arranged in non-increasing order and let \( (u_i')_{i = 1}^{\poseig} \) be the corresponding sequence of eigenvectors. From the proofs of Theorem \ref{theorem:DL-theorem-1} and \ref{theorem:DL-theorem-2} we infer that the computed dictionary satisfies 
\[
\sum_{i = 1}^{\dsize} \dict{i}' {\dict{i}'}\transp = \sum_{i = 1}^{\poseig} \eigval{i}' u'_i {u_i'}\transp,
\]
where \( (\eigval{i}')_{i = 1}^{\poseig} \) is the unique optimal solution to the problem:
\begin{equation*}
\begin{aligned}
		& \minimize_{ ( \eigval{i} )_i }	&& \sum_{i = 1}^{\poseig} \frac{\eigvalsig{i}' }{\eigval{i}} \\
		& \sbjto	&& 		\begin{cases} 
		              			 0 < \eigval{\poseig} \leq \cdots \leq \eigval{1}, \\
		              			 ( \length{1}, \length{2}, \ldots, \length{\dsize} ) \majorize ( \eigval{1}, \ldots, \eigval{\poseig},0, \ldots, 0 ). 
		                    \end{cases}
\end{aligned}
\end{equation*}
It can be easily verified that the cost of representation \( J(\mu', \sigmat{}') \) satisfies 
\[
J(\mu', \sigmat{}') = \EE_{\mathsf{P}} [\norm{\scheme'(V)}^2] = \trace \left( \sigmat{\randomvec - \mu'} \left( \sum_{i = 1}^{\poseig} \frac{1}{\eigval{i}'} u_i' {u_i'}\transp \right) \right) = \sum_{i = 1}^{\poseig} \frac{{u_i'}\transp \sigmat{\randomvec - \mu'} u_i'}{\eigval{i}'}.
\]

\begin{itemize}[leftmargin=*]
\item Letting \( (\eigvalsig{i}\opt)_{i = 1}^{\poseig} \) denote the non-zero eigenvalues of \( \sigmat{}\opt \), and \( (\eigval{i}\opt)_{i = 1}^{\poseig} \) denote the unique optimal solution to:
\begin{equation*}
\begin{aligned}
		& \minimize_{ ( \eigval{i} )_i }	&& \sum_{i = 1}^{\poseig} \frac{\eigvalsig{i}\opt }{\eigval{i}} \\
		& \sbjto	&& 		\begin{cases} 
		              			 0 < \eigval{\poseig} \leq \cdots \leq \eigval{1}, \\
		              			 ( \length{1}, \length{2}, \ldots, \length{\dsize} ) \majorize ( \eigval{1}, \ldots, \eigval{\poseig},0, \ldots, 0 ),
		                    \end{cases}
\end{aligned}
\end{equation*}
we observe that \( \eigval{i}' \xrightarrow[(\eigvalsig{i}')_{i = 1}^{\poseig} \rightarrow (\eigvalsig{i}\opt)_{i = 1}^{\poseig} ]{} \eigval{i}\opt \) for all \( i = \indexes{\poseig} \).

\item We also readily observe that 
\begin{align*}
{u_i'}\transp \big( \sigmat{\randomvec - \mu'} \big) u_i' & = {u_i'}\transp \big( \sigmat{\randomvec - \mu'} + \sigmat{}' - \sigmat{}' + \sigmat{}\opt - \sigmat{}\opt \big) u_i' \\
& = {u_i'}\transp \big( \sigmat{}' \big) u_i' + {u_i'}\transp \big( \sigmat{\randomvec - \mu'} - \sigmat{}\opt \big) u_i' + {u_i'}\transp \big( \sigmat{}\opt - \sigmat{}' \big) u_i' \\
& = \eigvalsig{i}' + {u_i'}\transp \big( \sigmat{\randomvec - \mu'} - \sigmat{}\opt \big) u_i' + {u_i'}\transp \big( \sigmat{}\opt - \sigmat{}' \big) u_i'.
\end{align*}
Since \( (\mu', \sigmat{}') \longrightarrow (\mu, \sigmat{}\opt) \), we see that \( \eigvalsig{i}' \longrightarrow \eigvalsig{i}\opt \), \( \big( \sigmat{\randomvec - \mu'} - \sigmat{}\opt \big) \longrightarrow 0 \) and \( \big( \sigmat{}\opt - \sigmat{}' \big) \longrightarrow 0 \).
\end{itemize}

Therefore, we see that 
\[
J(\mu;, \sigmat{}') = \sum_{i = 1}^{\poseig} \frac{{u_i'}\transp \sigmat{\randomvec - \mu'} u_i'}{\eigval{i}'} \xrightarrow[(\mu', \sigmat{}') \longrightarrow (\mu', \sigmat{}\opt) ]{} \sum_{i = 1}^{\poseig} \frac{\eigvalsig{i}\opt}{\eigval{i}\opt} = J(\mu, \sigmat{}\opt),
\]
and the assertion follows.

\section{Proofs of auxiliary results}
\label{section:proof-of-auxiliary-results}
\subsection{Proof of Lemma \ref{lemma: orthonormal basis}}
It is clear that an orthonormal basis \( (\orthobasis{i})_{i = 1}^{\dsize} \) of \( \R^{\dsize} \) with the property
\[
\inprod{\orthobasis{i}}{\linearmap \orthobasis{i}} = \length{i} \quad \text{for all \( i = \indexes{\dsize} \),}
\]
exists if and only if 
\begin{equation}
\label{eq: orthonormal basis property}
\inprod{\orthobasis{i}}{\frac{1}{2} ( \linearmap + \linearmap^{\transp} ) \orthobasis{i}} = \length{i} \quad \text{for all \( i = \indexes{\dsize} \).}
\end{equation}
Let \( \ortho{\dsize} \ni X \lmt S(X) \in \symmetric{\dsize} \) be defined by
\begin{equation}
\label{eq: S definition}
S(X) \Let X^{\transp} \bigg( \frac{1}{2} ( \linearmap + \linearmap^{\transp} ) \bigg) X.
\end{equation}
Since for every \( X \in \ortho{\dsize} \) the matrix \( S(X) \) is a conjugation of \( \frac{1}{2} ( \linearmap + \linearmap^{\transp} ) \), we conclude that the eigenvalues of \( S(X) \) are \( (\eigval{i})_{i = 1}^{\dsize} \). Conversely, for every \( S' \in \symmetric{\dsize} \) having eigenvalues \( (\eigval{i})_{i = 1}^{\dsize} \), there exists an \( X' \in \ortho{\dsize}\) such that \( S' = S(X') \).

From the \emph{Schur-Horn Theorem} \ref{theorem: schur horn} we know that there exists a symmetric matrix \( S \in \symmetric{\dsize} \) with eigenvalues \( (\eigval{i})_{i = 1}^{\dsize} \) and diagonal entries \( (\length{i})_{i = 1}^{\dsize} \) if and only if \( (\length{i})_{i = 1}^{\dsize} \majorize (\eigval{i})_{i = 1}^{\dsize} \). 
In other words, there exists an orthogonal matrix \( X \in \ortho{\dsize} \) such that \( S(X) \) has diagonal entries equal to \( (\length{i})_{i = 1}^{\dsize} \) if and only if \( (\length{i})_{i = 1}^{\dsize} \majorize (\eigval{i})_{i = 1}^{\dsize} \). Denoting the \( i \)th column of \( X \) by \( \orthobasis{i} \), we see that \( (\orthobasis{i})_{i = 1}^{\dsize} \) is an orthonormal basis of \( \R^{\dsize} \) and 
\[ 
\inprod{\orthobasis{i}}{\frac{1}{2} ( \linearmap + \linearmap^{\transp} ) \orthobasis{i}} = \big( S(X) \big)_{ii} \quad \text{is the \( i^{\text{th}} \) diagonal entry.}
\]
Therefore, we conclude that there exists an orthonormal basis \( (\orthobasis{i})_{i = 1}^{\dsize} \) of \( \R^{\dsize} \) that satisfies \eqref{eq: orthonormal basis property} if and only if \( (\length{i})_{i = 1}^{\dsize} \majorize (\eigval{i})_{i = 1}^{\dsize} \).

We now prove that the procedure described in Algorithm \ref{algo: orthonormal basis} indeed computes the required orthonormal basis. Let \( k \leq \dsize \) be a positive integer. A sequence of vectors \( (\uvec{i})_{i = 1}^k \) in \( \R^{\dsize} \) and a non-increasing sequence of real numbers \( (\aval{i})_{i = 1}^k \) are jointly said to be \emph{valid} if they satisfy the following properties:
\begin{enumerate}
\item The sequence of vectors \( (\uvec{i})_{i = 1}^k \) is an ordered collection of orthonormal vectors in \( \R^{\dsize} \) such that the sequence of real numbers \( (\inprod{\uvec{i}}{\linearmap \uvec{i}})_{i = 1}^k \) is in non-increasing order.

\item Whenever \( k \geq 2 \), \( p,q \in \{ \indexes{k} \} \) and \( p \neq q \), we have 
\[
\inprod{\uvec{p}}{(\linearmap + \linearmap^{\transp}) \uvec{q}} = 0.
\]

\item \( (\aval{i})_{i = 1}^k \majorize (\inprod{\uvec{i}}{\linearmap \uvec{i}})_{i = 1}^k \).
\end{enumerate}
Observe that such a collection always exists, for instance, any sub collection of eigenvectors of cardinality \( k \) and sequence of real numbers majorized by the sequence of corresponding eigenvalues would satisfy all the properties mentioned here.

Let us compute a unit vector \( x \in \R^{\dsize} \)  that is in the linear span of the vectors \( (\uvec{i})_{i = 1}^k \) and satisfies 
\[
\inprod{x}{\linearmap x} = \aval{1},
\]
via the following steps :

Let \( (\bval{i})_{i = 1}^k \) be a non-increasing sequence of real numbers defined by \( \bval{i} \Let \inprod{\uvec{i}}{\linearmap \uvec{i}} \) for all \( i = \indexes{k} \). Since \( (\aval{i})_{i = 1}^k \majorize (\bval{i})_{i = 1}^k \) by property (3), we have \( \bval{1} \geq \aval{1} \).
\begin{itemize}[leftmargin=*]
\item \textbf{case 1:} If \( \bval{1} = \aval{1} \), let us define
\begin{align*}
\begin{cases}
x & \Let \uvec{1}, \\
(\uvec{i}')_{i = 1}^{k - 1} & \Let (\uvec{(i + 1)})_{i = 1}^{k - 1}, \\
(\aval{i}')_{i = 1}^{k - 1} & \Let (\aval{(i + 1)})_{i = 1}^{k - 1}.
\end{cases}
\end{align*}
We see that the vector \( x \) satisfies
\[
\inprod{x}{\linearmap x} = \inprod{\uvec{1}}{\linearmap \uvec{1}} = \bval{1} = \aval{1},
\]
and \( \inprod{\uvec{i}'}{x} = 0 \) for all \( i = \indexes{k - 1} \). Also one can easily verify that \( (\uvec{i}')_{i = 1}^{k - 1} \) is an orthonormal sequence of vectors in \( \R^{\dsize} \) and \( \inprod{\uvec{p}'}{(\linearmap + \linearmap^{\transp}) \uvec{q}'} = 0 \) for all \( p,q \in \{ \indexes{k - 1} \}\) with \( p \neq q \). The condition that \( (\aval{i})_{i = 1}^k \majorize (\bval{i})_{i = 1}^k \) along with the fact that \( \aval{1} = \bval{1} \), implies at once that \( (\aval{i})_{i = 2}^{k} \majorize (\bval{i})_{i = 2}^{k} \). In other words, we have \( (\aval{i}')_{i = 1}^{k - 1} \majorize ( \inprod{\uvec{i}'}{\linearmap \uvec{i}'})_{i = 1}^{k - 1} \). This implies that the sequences \( (\uvec{i}')_{i = 1}^{k - 1} \), \( (\aval{i}')_{i = 1}^{k - 1} \) jointly satisfy all the properties (1),(2) and (3) described earlier and therefore are valid.

\item \textbf{case 2:} If \( \bval{1} > \aval{1} \), we know that \( (\aval{i})_{i = 1}^k \majorize (\bval{i})_{i = 1}^k \) and in particular we know that \( \sum_{i = 1}^k \aval{i} = \sum_{i = 1}^k \bval{i} \). Therefore, there exists \( i \) such that \( 1 < i \leq k \) and 
\[
 \inprod{\uvec{i}}{\linearmap \uvec{i}} = \bval{i} \quad \leq \; \aval{1} \; < \quad \bval{(i - 1)} = \inprod{\uvec{(i- 1)}}{\linearmap \uvec{(i - 1)}}.
\] 
Let us define a map \( \R \ni \theta \lmt \gmap{\theta} \in \R \) as,
\begin{equation}
\label{eq: definition g function}
\gmap{\theta} \Let \inprod{ \frac{\theta \uvec{1} + (1 - \theta) \uvec{i}}{\sqrt{\theta^2 + (1 - \theta)^2}} }{\; \linearmap \;\; \frac{\theta \uvec{1} + (1 - \theta) \uvec{i}}{\sqrt{\theta^2 + (1 - \theta)^2}}}.
\end{equation}
It should be observed that for \( \theta = 0 \) we have
\[
\gmap{0} = \inprod{ \frac{\theta \uvec{1} + (1 - \theta) \uvec{i}}{\sqrt{\theta^2 + (1 - \theta)^2}} }{\; \linearmap \;\; \frac{\theta \uvec{1} + (1 - \theta) \uvec{i}}{\sqrt{\theta^2 + (1 - \theta)^2}}} \Bigg \rvert_{\theta = 0} = \inprod{\uvec{i}}{\linearmap \uvec{i}} \leq \aval{1},
\] 
and for \( \theta = 1 \) we have
\[
\gmap{1} = \inprod{ \frac{\theta \uvec{1} + (1 - \theta) \uvec{i}}{\sqrt{\theta^2 + (1 - \theta)^2}} }{\; \linearmap \;\; \frac{\theta \uvec{1} + (1 - \theta) \uvec{i}}{\sqrt{\theta^2 + (1 - \theta)^2}}} \Bigg \rvert_{\theta = 1} = \inprod{\uvec{1}}{\linearmap \uvec{1}} > \aval{1}.
\] 
Since \( \gmap{\theta} \) is a continuous function of \( \theta \), there exists a solution \( \Theta \in  [0,1] \) for the following quadratic equation
\begin{equation}
\label{eq: g = 0}
\gmap{\theta} = \inprod{ \frac{\theta \uvec{1} + (1 - \theta) \uvec{i}}{\sqrt{\theta^2 + (1 - \theta)^2}} }{\; \linearmap \;\; \frac{\theta \uvec{1} + (1 - \theta) \uvec{i}}{\sqrt{\theta^2 + (1 - \theta)^2}}} = \aval{1}.
\end{equation}
Using the fact that \( \inprod{\uvec{1}}{\linearmap \uvec{1}} = \bval{1} \), \( \inprod{\uvec{i}}{\linearmap \uvec{i}} = \bval{i} \), and \( \inprod{\uvec{1}}{(\linearmap + \linearmap^{\transp}) \uvec{i}} = 0 \), one can simplify \eqref{eq: g = 0} to get the following quadratic equation
\[
(\bval{1} + \bval{i} - 2 \aval{1}) \theta^2 \; + \; 2 (\aval{1} - \bval{i}) \theta \; + \; (\bval{i} - \aval{i}) = 0.
\]
Simple calculations lead to a solution
\begin{equation}
\label{eq: theta solution}
\Theta \Let \frac{\sqrt{\aval{1} - \bval{i}}}{\sqrt{\aval{1} - \bval{i}} + \sqrt{\bval{1} - \aval{1}} } \; \in \; [0,1],
\end{equation}
which is the required root of \eqref{eq: g = 0}.

We define \( x \), \( v \in \R^{\dsize}\), \( (\uvec{i}')_{i = 1}^{k - 1}  \subset \R^{\dsize} \) and \( (\aval{i}')_{i = 1}^{k - 1} \subset \R \) as: 
\begin{equation}
\label{eq: update in proof}
\begin{aligned}
\begin{cases}
x & \Let \frac{\Theta \uvec{1} + (1 - \Theta) \uvec{i}}{\sqrt{\Theta^2 + (1 - \Theta)^2}}, \\
v & \Let \frac{(1 - \Theta) \uvec{1} - \Theta \uvec{i}}{\sqrt{\Theta^2 + (1 - \Theta)^2}}, \\
(\uvec{i}')_{i = 1}^{k - 1} & \Let \sort \big\{ (\uvec{i})_{i = 1}^k \setminus \{\uvec{1}, \uvec{i} \} \cup \{ v \}  \big\}, \\
(\aval{i}')_{i = 1}^{k - 1} & \Let (\aval{i + 1})_{i = 1}^{k - 1}.
\end{cases}
\end{aligned}
\end{equation}

It is immediate that \( \inprod{x}{\linearmap x} = \gmap{\Theta} = \aval{1} \) and \( \inprod{v}{x} = 0 \). Since \( x \) is a linear combination of \( \uvec{1} \) and \( \uvec{i} \), it also follows that for all \( p = 2 ,\ldots, (i - 1), (i + 1), \ldots, k \), we have \( \inprod{\uvec{p}}{x} = 0 \). We conclude that \( \inprod{\uvec{i}'}{x} = 0 \) for all \( i = \indexes{k - 1} \).

Since \( v \) is a linear combination of the vectors \( \{ \uvec{1}, \uvec{i} \} \), we have \( \inprod{\uvec{p}}{v} = 0 \) and \( \inprod{\uvec{p}}{(\linearmap + \linearmap^{\transp}) v} = 0 \) for all \( p = 2 ,\ldots, (i - 1), (i + 1), \ldots, k \). It also follows immediately that \( \inprod{\uvec{p}}{\uvec{q}} = 0 \) and \( \inprod{\uvec{p}}{(\linearmap + \linearmap^{\transp}) \uvec{q}} = 0 \) for all \( p,q \in \{ 2,\ldots,(i - 1),(i + 1),\ldots,k \} \) with \( p \neq q \). This implies that \( (\uvec{i}')_{i = 1}^{k - 1} \) is an orthonormal sequence of vectors in \( \R^{\dsize} \) that also satisfies property (2).

Now we shall prove that \( (\aval{i}')_{i = 1}^{k - 1} \majorize ( \inprod{\uvec{i}'}{\linearmap \uvec{i}'} )_{i = 1}^{k - 1} \). Since \( \Span \{ ( \uvec{i}' )_{i = 1}^{k - 1} \cup ( x ) \} = \Span \{ ( \uvec{i} )_{i = 1}^k \} \), it is easily verified that \( \inprod{v}{\linearmap v} = (\bval{1} + \bval{i} - \aval{1}) \). Defining \( \bval{i}' \Let \inprod{\uvec{i}'}{\linearmap \uvec{i}'} \) for all \( i = \indexes{k - 1} \), we see at once that 
\[
(\bval{i}')_{i = 1}^{k - 1} = \sort \big\{ \{ \bval{i} \}_{i = 1}^k \setminus \{ \bval{1},\bval{i} \} \cup \{ \bval{1} + \bval{i} - \aval{1} \}  \big\}.
\] 
For \( \bval{1} + \bval{i} - \aval{1} \), exactly one of the following two cases arise:
\begin{itemize}[label=\(\circ\), leftmargin=*]
\item \( \bval{1} + \bval{i} - \aval{1} \leq \bval{i - 1 }  \). 

In this case we have 
\begin{align*}
\bval{j}' & = \bval{i + 1} \quad \text{for all \( j = \indexes{i - 2} \),} \\
\bval{i - 1}' & = (\bval{1} + \bval{i} - \aval{1}), \\
\bval{j}' & = \bval{j + 1} \quad \text{for all \( j = i, \ldots, k - 1 \)}.
\end{align*}
For \( j,l \) such that \( 1 \leq l \leq i - 2 \) and \( 1 \leq j \leq l \), we see that \( \bval{j}' \geq \bval{1} + \bval{i} - \aval{1} > \aval{1} \). By definition of \( \bval{i} \), it follows that \( \bval{j}' = \bval{j + 1} > \aval{1} \). Therefore 
\[
\sum_{j = 1}^l \bval{j}' > \sum_{j = 1}^l \aval{1} \geq \sum_{j = 2}^{l + 1} \aval{j} = \sum_{j = 1}^l \aval{j}'. 
\]
For \( l = i - 1 \), we have
\begin{align*}
\sum_{j = 1}^{i - 1} \bval{j}' & = \sum_{j = 2}^{i - 1} \bval{j} + \bval{1} + \bval{i} - \aval{1} \\
& = \sum_{j = 1}^{i} \bval{j} - \aval{1} \geq \sum_{j = 1}^{i} \aval{j} - \aval{1} \\
& \geq \sum_{j = 1}^{i - 1} \aval{j}'.
\end{align*}

\item \( \bval{i - 1} < (\bval{1} + \bval{i} - \aval{1}) \).

In this case we see that \( (\bval{j}')_{j = 1}^{i - 2} = \sort \{ (\bval{j})_{j = 2}^{i - 2} \cup ((\bval{1} + \bval{i} - \aval{1})) \} \), \( \bval{(i - 1)}' = \bval{i - 1} \) and \( \bval{j}' = \bval{j + 1} \) for all \( j = i, \ldots, k - 1 \). For \( l \) such that \( 1 \leq l \leq (i - 1) \), 
\[
\sum_{j = 1}^l \bval{j}' > \sum_{j = 1}^l \bval{i - 1}' > \sum_{j = 1}^l \aval{1} \geq \sum_{j = 2}^{l + 1} \aval{j} = \sum_{j = 1}^l \aval{j}'.
\]
\end{itemize}
Finally, for \( i \leq l \leq k - 1 \), we have 
\begin{align*}
\sum_{j = 1}^l \bval{j}' & = \sum_{j = 2}^{i - 1} \bval{j} + \bval{1} + \bval{i} - \aval{1} + \sum_{j = i + 1}^{l + 1} \bval{j} \\
& = \sum_{j = 1}^{l + 1} \bval{j} - \aval{1} \geq \sum_{j = 2}^{l + 1} \aval{j} \\
& \geq \sum_{j = 1}^l \aval{j}'.
\end{align*}
Therefore, we have proved that \( (\aval{i}')_{i = 1}^{k - 1} \majorize (\bval{i}')_{i = 1}^{k - 1} = (\inprod{\uvec{i}'}{\linearmap \uvec{i}'})_{i = 1}^{k - 1} \).
\end{itemize}

To summarize, we started with \emph{valid} sequences \( (\uvec{i})_{i = 1}^k \) and \( (\aval{i})_{i = 1}^k \), from these, two new similarly valid sequences \( (\uvec{i}')_{i = 1}^{k - 1} \), \( (\aval{i}')_{i = 1}^{ k- 1} \) were computed along with a vector \( x \) that satisfies \( \inprod{x}{\linearmap x} = \aval{1} \) and \( \inprod{\uvec{i}'}{x} = 0 \) for all \( i = \indexes{k - 1} \). 

It has to be noted that the sequences \( (\uvec{i}(1))_{i = 1}^{\dsize} \) and \( (\aval{i}(1))_{i = 1}^{\dsize} \) are valid, i.e., they jointly satisfy the properties (1),(2) and (3). Let us assume that for some \( t \in \{ \indexes{\dsize - 1} \} \), the sequences \( (\uvec{i}(t))_{i = 1}^{\dsize - t + 1} \) and \( (\aval{i}(t))_{i = 1}^{\dsize - t + 1} \) are valid. From the analysis done above, we conclude that by following the procedure given in an iteration of the loop (step) of Algorithm \ref{algo: orthonormal basis}, two new valid sequences \( (\uvec{i}(t + 1))_{i = 1}^{\dsize - t} \), \( (\aval{i}(t + 1))_{i = 1}^{\dsize - t} \) are obtained and in addition, we get a vector \( \orthobasis{t} \in \Span (\uvec{i}(t))_{i = 1}^{\dsize - t + 1} \) that satisfies 
\begin{align*}
\inprod{\orthobasis{t}}{\linearmap \orthobasis{t}} & = \aval{1}(t) = \aval{t}(1) = \length{t}, \\
\inprod{\uvec{i}(t + 1)}{\orthobasis{t}} & = 0 \quad \text{for all \( i = \indexes{\dsize - t} \).}
\end{align*}
Using induction on \( t \) and the fact that \( \orthobasis{t} \in \Span (\uvec{i}(t))_{i = 1}^{\dsize - t + 1} \) and \( \inprod{\uvec{i}(t + 1)}{\orthobasis{t}} = 0 \) for all \( i = \indexes{\dsize - t} \), we get that \( (\orthobasis{t})_{t = 1}^{\dsize} \) is a sequence of orthonormal vectors in \( \R^{\dsize} \). We conclude that the Algorithm \ref{algo: orthonormal basis} indeed computes an orthonormal basis \( (\orthobasis{t})_{t = 1}^{\dsize} \) with properties described in Lemma \ref{lemma: orthonormal basis}. The proof is now complete

\subsection{Proof of Lemma \ref{lemma: primal dual}}
We shall begin with calculating the Lagrange dual of \eqref{eq: primalgeneral}. Let \( \eqmul \) be the KKT multiplier for the equality constraint and \( (\ineqmul{j})_{j = 1}^{\poseig - 1} \) be the KKT multipliers for the inequality constraints in \eqref{eq: primalgeneral}. The Lagrangian \( \lagrangian \left( (\eigval{i})_{i = 1}^{\poseig}, \eqmul, (\ineqmul{j})_{j = 1}^{\poseig - 1} \right) \) is defined by 
\begin{equation}
\label{eq: lagrangian}
\begin{aligned}
\lagrangian \left( (\eigval{i})_{i = 1}^{\poseig}, \eqmul, (\ineqmul{j})_{j = 1}^{\poseig - 1} \right) & \Let \sum_{i = 1}^{\poseig} s_i \Big( \frac{1}{\eigval{i}} \Big) + \eqmul \Big( \sum_{i = 1}^{\poseig} \eigval{i} - a_i \Big) + \sum_{j = 1}^{\poseig - 1} \ineqmul{j} \Big( \sum_{i = 1}^j a_i - \eigval{i} \Big), \\
& = \sum_{i = 1}^{\poseig} s_i \Big( \frac{1}{\eigval{i}} \Big) + \eigval{\poseig} \eqmul + \sum_{i = 1}^{\poseig - 1} \eigval{i} \Big( \eqmul - \sum_{j = i}^{\poseig - 1} \ineqmul{j} \Big) \\
& - \Big( a_{\poseig} \eqmul + \sum_{i = 1}^{\poseig - 1} a_i \big( \eqmul - \sum_{j = i}^{\poseig - 1} \ineqmul{j} \big) \Big).
\end{aligned}
\end{equation}
Let \( g(\eqmul, (\ineqmul{j})_{j = 1}^{\poseig - 1}) \) be the Lagrange dual function given by 
\begin{equation}
\label{eq: lagrange dual function}
g(\eqmul, (\ineqmul{j})_{j = 1}^{\poseig - 1}) \; \Let \; \min_{\eigval{i} > 0, \; i = 1, \ldots, \poseig} \quad \lagrangian  \left( (\eigval{i})_{i = 1}^{\poseig}, \eqmul, (\ineqmul{j})_{j = 1}^{\poseig - 1} \right).
\end{equation}
It can be seen that for \( \eqmul < 0 \) one can select\( \eigval{i} = 1 \) for all \( i = \indexes{\poseig - 1} \), and  \( \eigval{\poseig} \) arbitrarily large, whereby the minimum value achieved in \eqref{eq: lagrange dual function} is negative infinity. Similarly for any \( i \in \{ \indexes{\poseig - 1} \}\), if \( \eqmul - \sum_{j = i}^{\poseig - 1} \ineqmul{j} < 0 \), one can select \( \eigval{l} = 1 \) for all \( l \neq i \) and \( \eigval{i} \) arbitrarily large, leading to the minimum value being negative infinity again. When \( \eqmul \geq 0 \) and \( \eqmul - \sum_{j = i}^{\poseig - 1} \ineqmul{j} \geq 0 \) for all \( i = \indexes{\poseig - 1} \), due to convexity of \( \lagrangian \) the minimizer in \eqref{eq: lagrange dual function} can be calculated by equating the gradient of \( \lagrangian \) to zero. This minimizer is given by
\begin{align*}
\eigval{i} &= \sqrt{\frac{s_i}{\eqmul - \sum_{j = 1}^{\poseig - 1} \ineqmul{j}}} \quad \text{for all \( i = \indexes{\poseig - 1}, \)} \\
\eigval{\poseig} &= \sqrt{\frac{s_{\poseig}}{\eqmul}},
\end{align*}
with the optimal value given by 
\begin{equation}
\begin{aligned}
g'(\eqmul, (\ineqmul{j})_{j = 1}^{\poseig - 1}) \Let & \sum_{i = 1}^{\poseig - 1} \big( \eqmul - \sum_{j = i}^{\poseig - 1} \ineqmul{j} \big)^{1/2} \Big( 2 \sqrt{s_i} - a_i \big( \eqmul - \sum_{j = i}^{\poseig - 1} \ineqmul{j} \big)^{1/2} \Big) \\
& + (\eqmul)^{1/2} \big( 2 \sqrt{s_{\poseig}} \; - \; a_{\poseig} (\eqmul)^{1/2} \big).
\end{aligned}
\end{equation}
Therefore, the Lagrange dual function is
\[
g(\eqmul, (\ineqmul{j})_{j = 1}^{\poseig - 1}) = 
\begin{cases}
g'(\eqmul, (\ineqmul{j})_{j = 1}^{\poseig - 1}) & \text{\( \eqmul \geq 0, \eqmul - \sum_{j = i}^{\poseig - 1} \ineqmul{j} \geq 0 \)}, \\
-\infty & \text{otherwise}.
\end{cases}
\]
The Lagrange dual problem of \eqref{eq: primalgeneral} is 
\begin{equation}
\label{eq:lagrange dual problem}
\begin{aligned}
& \maximize_{\eqmul, (\ineqmul{j})_{j = 1}^{\poseig - 1}}	&&  g'(\eqmul, (\ineqmul{j})_{j = 1}^{\poseig - 1}) \\
		& \sbjto							&&
		\begin{cases}
			\eqmul \geq 0, \\
			\eqmul - \sum_{j = i}^{\poseig - 1} \ineqmul{j} \geq 0 \quad \text{for all \(i = \indexes{\poseig - 1}\)}, \\
			\ineqmul{j} \geq 0 \quad \text{for all \(j = \indexes{\poseig - 1}\)}.
		\end{cases}
\end{aligned}
\end{equation}
We define a new set of variables \( (\dualvar{t})_{t = 1}^{\poseig} \) by
\begin{equation}
\label{eq:variable transformation in primal dual}
\begin{aligned}
\dualvar{t} & \Let \Big( \eqmul - \sum_{j = t}^{\poseig - 1} \ineqmul{j} \Big)^{1/2} \quad \text{for all \( t = \indexes{\poseig - 1} \)}, \\
\dualvar{\poseig} & \Let (\eqmul)^{1/2}.
\end{aligned}
\end{equation}
It is easy to see that the mapping \( \big( \eqmul, (\ineqmul{j})_{j = 1}^{\poseig - 1} \big) \lmt (\dualvar{t})_{t = 1}^{\poseig} \) given by \eqref{eq:variable transformation in primal dual} is injective whenever it is well defined. The first two constraints in \eqref{eq:lagrange dual problem} imply that \( \dualvar{t} \geq 0 \) for all \( t = \indexes{\poseig} \), and the third constraint implies that \( \dualvar{1} \leq \dualvar{2} \leq \cdots \leq \dualvar{\poseig} \). Conversely for a sequence  of numbers \( (\dualvar{t})_{t = 1}^{\poseig} \) such that \( 0 \leq \dualvar{1} \leq \cdots \leq \dualvar{\poseig} \), it is clear that the preimage \( \big( \eqmul, (\ineqmul{j})_{j = 1}^{\poseig - 1} \big) \) also satisfies the constraints of \eqref{eq:lagrange dual problem}. 
The objective function in \eqref{eq:lagrange dual problem} in terms of the new variables \( (\dualvar{t})_{t = 1}^{\poseig} \) is given by
\[
\sum_{t = 1}^{\poseig} 2 \sqrt{s_t} \dualvar{t} - a_t \dualvar{t}^2. 
\]
Therefore, the problem \eqref{eq:lagrange dual problem} can be recast in terms of variables \( (\dualvar{t})_{t = 1}^{\poseig} \) as a minimization problem in the following way:
\begin{equation}
\label{eq:modified lagrange dual}
\begin{aligned}
& \minimize_{( \dualvar{t} )_{t = 1}^{ \poseig }}	&&  \sum_{t = 1}^{\poseig} a_t \dualvar{t}^2 - 2 \sqrt{s_t} \dualvar{t} \\
		& \sbjto							&&
			0 \leq \dualvar{1} \leq \cdots \leq \dualvar{\poseig}.
\end{aligned}
\end{equation} 
Since \( a_t > 0 \) for all \( t = \indexes{\poseig} \), the optimization problem \eqref{eq:modified lagrange dual} is a convex quadratic program and admits an optimal solution. Let \( (\dualvar{t} \opt)_{t = 1}^{\poseig} \) be an optimal solution to \eqref{eq:modified lagrange dual} and \( q \opt \) be the optimal value. Let \( \big( \eqmul \opt, (\ineqmul{j} \opt)_{j = 1}^{\poseig - 1} \big) \) be the unique preimage of \( (\dualvar{t} \opt)_{t = 1}^{\poseig} \) obtained via \eqref{eq:variable transformation in primal dual}. Then \( \big( \eqmul \opt, (\ineqmul{j} \opt)_{j = 1}^{\poseig - 1} \big) \) is an optimal solution to \eqref{eq:lagrange dual problem} and the optimal value is \( - q \opt \). Due to strong duality between \eqref{eq: primalgeneral} and \eqref{eq:lagrange dual problem}, we conclude that the optimization problem \eqref{eq: primalgeneral} admits an optimal solution \( (\eigval{i} \opt)_{i = 1}^{\poseig} \). If we denote \( p \opt \) to be the optimal value in \eqref{eq: primalgeneral} then we have \( p \opt = - q \opt \). Because of strong duality, we know that a primal optimal solution \( (\eigval{i} \opt)_{i = 1}^{\poseig} \) and a dual solution \( \big( \eqmul \opt, (\ineqmul{j} \opt)_{j = 1}^{\poseig - 1} \big) \) together will satisfy the KKT conditions, which are:
\begin{equation}
\label{eq:kkt conditions}
\begin{aligned}
\nabla_{\eigval{i}} \lagrangian \left( (\eigval{i} \opt)_{i = 1}^{\poseig}, \eqmul \opt, (\ineqmul{j} \opt)_{j = 1}^{\poseig - 1} \right) & = 0 \quad \text{for all \( i = \indexes{\poseig} \)}, \\
\ineqmul{j} \opt \big( \sum_{i = 1}^j a_i - \eigval{i} \opt \big) & = 0 \quad \text{for all \( j = \indexes{\poseig - 1} \)}, \\
\ineqmul{j} \opt & \geq 0 \quad \text{for all \( j = \indexes{\poseig - 1} \)}.
\end{aligned}
\end{equation}
The first KKT condition can be written as 
\begin{equation}
\label{eq:dummy kkt equations}
\begin{aligned}
0 & =  \frac{- s_{\poseig}}{ ( \eigval{\poseig} \opt )^2} + \eqmul \opt , \\
0 & =  \frac{- s_i}{ ( \eigval{i} \opt )^2} + \Big( \eqmul \opt - \sum_{j = i}^{\poseig - 1} \ineqmul{j} \opt \Big) \quad \text{for all \( i = \indexes{\poseig - 1}. \)}
\end{aligned}
\end{equation}
In terms of \( (\dualvar{t} \opt)_{t = 1}^{\poseig} \) the conditions \eqref{eq:dummy kkt equations} are simplified to
\begin{equation}
\label{eq:kkt gradient conditions}
\begin{aligned}
\eigval{i} \opt & = \frac{\sqrt{s_i}}{\dualvar{i} \opt} \quad \text{for all \( i = \indexes{\poseig}. \)}
\end{aligned}
\end{equation}
Since \( a_t, s_t > 0 \) for all \( t = \indexes{\poseig} \), we conclude that the optimal solution \( (\dualvar{t} \opt)_{t = 1}^{\poseig} \) to \eqref{eq:modified lagrange dual} satisfies, \( \dualvar{t} \opt > 0 \) for all \( t = \indexes{\poseig} \), which makes the condition \eqref{eq:kkt gradient conditions} valid. It is easy to see that if \( (\dualvar{t} \opt)_{t = 1}^{\poseig} \) is an optimal solution to \eqref{eq:modified lagrange dual}, then there exists a unique solution \( (\eigval{i} \opt)_{i = 1}^{\poseig} \) that satisfies \eqref{eq:kkt gradient conditions} and vice versa. Therefore, we conclude that both the problems \eqref{eq: primalgeneral} and \eqref{eq: dualgeneral} (which is same as \eqref{eq:modified lagrange dual}), admit unique optimal solutions.

Let us define a partition \( (n_1, n_2, \ldots, n_T) \) of the set \( (\indexes{\poseig}) \) by
\begin{equation}
\label{eq:partition definition}
\begin{aligned}
\partition{1} & \Let 1, \\
        \partition{l} & \Let \min \{ t \; \vert \; \partition{(l - 1)} < t \leq \poseig, \; \dualvar{(t - 1)} \opt < \dualvar{t} \opt \} \quad  \text{for all \( l = 2,\ldots, T \).} 
\end{aligned}
\end{equation}
In addition, if we let \( n_{T + 1} \Let \poseig + 1 \), it is obvious that 
\begin{equation}
\label{eq:xt equal to constant}
\dualvar{t} \opt = c_l \quad \text{for all \( n_l \leq t < n_{l + 1} \) and \( l = \indexes{T} \), }
\end{equation}
for some constant \( c_l \). It is also obvious to see from \eqref{eq:variable transformation in primal dual} and \eqref{eq:partition definition} that among \( (\ineqmul{j} \opt)_{j = 1}^{\poseig - 1} \) we have \( \ineqmul{n_l - 1} \opt > 0 \) for \( l = 2, \ldots, T \) and the rest are equal to zero. Then the complimentary slackness condition i.e., the second KKT condition in \eqref{eq:kkt conditions} implies that 
\[
\sum_{i = 1}^{n_l - 1} a_i - \eigval{i} \opt = 0 \quad \text{for all \( l = 2, \ldots, T \),}
\]
and along with the fact that \( \sum_{i = 1}^{\poseig} a_i - \eigval{i} \opt = 0 \), we get
\begin{equation}
\label{eq:sum of partitions}
\sum_{i = n_l}^{n_{l + 1} - 1} a_i = \sum_{i = n_l}^{n_{l + 1} - 1} \eigval{i} \opt \quad \text{for all \( l = \indexes{T}. \)}
\end{equation}

Therefore, from \eqref{eq:sum of partitions}, \eqref{eq:kkt gradient conditions}, and \eqref{eq:xt equal to constant} we conclude that for \( l = \indexes{T} \) and \( n_l \leq i < n_{l + 1} \) we have 
\begin{equation}
\label{eq:primal-solution}
\eigval{i} \opt = \sqrt{s_i} \; \left( \frac{\sum\limits_{j = n_l}^{n_{l + 1} - 1} a_j}{\sum\limits_{j = n_l}^{n_{l + 1} - 1} \sqrt{s_j}} \right).
\end{equation}
The solution given in \eqref{eq:primal-solution} can be compactly written using the mapping defined in \eqref{eq:definition-of-lambda} as 
\[
(\eigval{i} \opt)_{i = 1}^{\poseig} = \eigvalmap{(\sqrt{s_i})_{i = 1}^{\poseig}}{(a_i)_{i = 1}^{\poseig}}{(\partition{l})_{l = 1}^{T}}.
\]
Using the solution \eqref{eq:primal-solution} the optimal value in the optimization problem \eqref{eq: primalgeneral} can be readily calculated, and it is given by
\begin{equation}
\label{eq:primal-optimal-value}
-q \opt = p \opt = \sum\limits_{l = 1}^T \left( \frac{ \left( \sum\limits_{j = n_l}^{n_{l + 1} - 1} \sqrt{s_j} \right)^2 }{\sum\limits_{j = n_l}^{n_{l + 1} - 1} a_j} \right) = \optval{(\sqrt{s_i})_{i = 1}^{\poseig}}{(a_i)_{i = 1}^{\poseig}}{(\partition{l})_{l = 1}^{T}}.
\end{equation}
This completes the proof.

\subsection{Proof of Lemma \ref{lemma:primal-dual-2}.}
First we shall conclude that the optimal sequence \( ( \eigval{i}\opt )_{i = 1}^{\poseig} \) is non-increasing. Let \( (\eigval{i}')_{i = 1}^{\poseig} \) to be the unique optimal solution to \eqref{eq: primalgeneral}, and let \( (\eigval{i}\opt)_{i = 1}^{\poseig} \) be the sequence obtained by rearranging the terms in non-increasing order. Since, \( \sum_{i = 1}^j \eigval{i}' \leq \sum_{i = 1}^j \eigval{i}\opt \) for every \( j = \indexes{\poseig - 1} \), and \( \sum_{i = 1}^{\poseig} \eigval{i}' = \sum_{i = 1}^{\poseig} \eigval{i}\opt \), it is immediate that the sequence \( (\eigval{i}\opt)_{i = 1}^{\poseig} \) is feasible for \eqref{eq: primalgeneral}. From the rearrangement inequality, we have:
\[
\sum_{i = 1}^{\poseig} \frac{s_i}{\eigval{i}\opt} \; \leq \; \sum_{i = 1}^{\poseig} \frac{s_i}{\eigval{i}'}.
\]
Therefore, the sequence \( (\eigval{i}\opt)_{i = 1}^{\poseig} \) is also optimal for \eqref{eq: primalgeneral}. This contradicts the fact that the optimization problem \eqref{eq: primalgeneral} admits a unique optimal solution, and the claim follows.

We shall prove the second assertion of the Lemma by contradiction. Without loss of generality, one can assume that \( j > i \), and suppose that \( s_j < s_i \) (because \( (s_i) \) is a non-increasing sequence). Let us define \( \epsilon \), \( (\eigval{l}')_{l = 1}^{\poseig} \) by 
\begin{equation*}
\begin{aligned}
\epsilon \Let \frac{\eigval{} (s_i - s_j) }{2 (s_i + s_j) }, \quad \text{and} \quad 
\eigval{l}' \Let 
\begin{cases}
\eigval{l}\opt   \quad &\text{for all \( l \neq i,j \)}, \\
\eigval{i}\opt + \epsilon &l = i,\\
\eigval{j}\opt - \epsilon   &l = j.
\end{cases}
\end{aligned}
\end{equation*}
Clearly, \( \epsilon > 0 \). It is also easy to verify that \( \eigval{l}' > 0 \) for all \( l = \indexes{\poseig} \), and \( \sum\limits_{l = 1}^k \eigval{l}\opt \leq \sum\limits_{l = 1}^k \eigval{l}' \) for all \( k = \indexes{\poseig} \). Therefore, the sequence\( ( \eigval{l}' )_{l = 1}^{\poseig} \) is feasible for the optimization problem \eqref{eq: primalgeneral}. We see that,
\[
\sum\limits_{l = 1}^{\poseig} \frac{s_l}{\eigval{l}'} - \sum_{l = 1}^{\poseig} \frac{s_l}{\eigval{l}\opt} \ = \  - \left( \frac{\epsilon^2 ( s_i + s_j ) }{\eigval{} (\eigval{} - \epsilon) (\eigval{} + \epsilon)} \right) \ < \ 0,
\]
which contradicts the fact that \( ( \eigval{l}\opt )_{l = 1}^{\poseig} \) is the optimal solution. The assertion of the lemma follows.

\subsection{Proof of Lemma \ref{lemma:primal-dual-3}.}
The first assertion of the lemma is straightforward, we shall prove the second assertion by contradiction. Let \( I \Let \{ \indexes{\poseig} \} \), and for every \( i = \indexes{\poseig} \) let us define, \( I_i \Let \big\{ j \in I \vert \eigval{j}\opt = \eigval{i}\opt \big\} \); since \( i \in I_i \) we have \( I_i \neq \emptyset \). Suppose that there exists \( i \in I \) such that for every \( j \in I_i \) we have \( s_j \neq s_{\pi\opt(i)} \). Exactly one of the following is true:
\begin{itemize}[leftmargin=*]
\item \textbf{case 1:} \( s_{\pi\opt(i)} > s_j \) for all \( j \in I_i \).\footnote{This is due to the fact that \( (s_i)_i \) is a non-increasing sequence.} \newline
In this case, \( \pi\opt(i) < j \) for all \( j \in I_i \), and therefore, there exists \( k \in I \) such that \( k < j \) for all \( j \in I_i \) and \( s_{\pi\opt(k)} < s_{\pi\opt(i)} \). Since \( k < j \) for all \( j \in I_i \), we have \( \eigval{k}\opt > \eigval{i}\opt \). We see that
\begin{equation*}
\sum_{l \neq i, k} \frac{s_{\pi\opt(l)}}{\eigval{l}\opt} + \frac{s_{\pi\opt(i)}}{\eigval{i}\opt} + \frac{s_{\pi\opt(k)}}{\eigval{k}\opt} > \sum_{l \neq i, k} \frac{s_{\pi\opt(l)}}{\eigval{l}\opt} + \frac{s_{\pi\opt(i)}}{\eigval{k}\opt} + \frac{s_{\pi\opt(k)}}{\eigval{i}\opt}.
\end{equation*}

\item \textbf{case 2:} \( s_{\pi\opt(i)} < s_j \) for all \( j \in I_i \). \newline
In this case, \( \pi\opt(i) > j \) for all \( j \in I_i \), and therefore, there exists \( k \in I \) such that \( k > j \) for all \( j \in I_i \) and \( s_{\pi\opt(k)} > s_{\pi\opt(i)} \). Since \( k > j \) for all \( j \in I_i \), we have \( \eigval{k}\opt < \eigval{i}\opt \). We see that
\begin{equation*}
\sum_{l \neq i, k} \frac{s_{\pi\opt(l)}}{\eigval{l}\opt} + \frac{s_{\pi\opt(i)}}{\eigval{i}\opt} + \frac{s_{\pi\opt(k)}}{\eigval{k}\opt} > \sum_{l \neq i, k} \frac{s_{\pi\opt(l)}}{\eigval{l}\opt} + \frac{s_{\pi\opt(i)}}{\eigval{k}\opt} + \frac{s_{\pi\opt(k)}}{\eigval{i}\opt}.
\end{equation*}
\end{itemize}
This contradicts the optimality of the permutation map \( \pi\opt \) since the new permuted sequence obtained by swapping \( s_{\pi\opt(i)} \) with \( s_{\pi\opt(k)} \) evaluates to a lower value of the objective function in both the cases. The assertion follows.

To see that \( \pi\opt \) is the optimal solution to \eqref{eq:opt-perm-map}, suppose \eqref{eq:opt-perm-map-solution} does not holds. Then there exist \( i,j \in I \) such that \( i < j \) and \( s_{\pi\opt(i)} < s_{\pi\opt(j)} \). Since \( i < j \), we have \( \eigval{i}\opt \geq \eigval{j}\opt \). Suppose that \( \eigval{i}\opt > \eigval{j}\opt \). Then we see that
\begin{equation*}
\sum_{l \neq i, j} \frac{s_{\pi\opt(l)}}{\eigval{l}\opt} + \frac{s_{\pi\opt(i)}}{\eigval{i}\opt} + \frac{s_{\pi\opt(j)}}{\eigval{j}\opt} > \sum_{l \neq i, k} \frac{s_{\pi\opt(l)}}{\eigval{l}\opt} + \frac{s_{\pi\opt(i)}}{\eigval{j}\opt} + \frac{s_{\pi\opt(j)}}{\eigval{i}\opt},
\end{equation*}a
contradicting the optimality of \( \pi\opt \). Thus, we conclude that \( \eigval{i}\opt = \eigval{j}\opt \). Therefore, we have \( I_i = I_j \), and from the previous assertion of the lemma we conclude that there exist \( i',j' \in I_i \) such that \( s_{\pi\opt(i)} = s_{i'} \) and \( s_{\pi\opt(j)} = s_{j'} \). Since \( s_{\pi\opt(i)} < s_{\pi\opt(j)} \), we have \( s_{i'} < s_{j'} \), but since \( \eigval{i'}\opt = \eigval{j'}\opt \), we conclude from Lemma \ref{lemma:primal-dual-2} that \( s_{i'} = s_{j'} \), which is a contradiction. Finally, since the sequence \( (s_l)_{l = 1}^{\poseig} \) is itself non-increasing, we immediately get \( s_{\pi\opt(l)} = s_l \) for all \( l = \indexes{\poseig} \), thereby completing the proof.

\subsection*{Acknowledgements}
We sincerely thank Prof.\ N.\ Khaneja for his valuable suggestions on the theory of majorization, and Prof.\ V.\ S.\ Borkar for illuminating discussions and encouragement.

\vskip 0.2in
\bibliographystyle{alpha}
\bibliography{ref}

\end{document}